\newcommand{\eqdef}{\overset{\text{\tiny def}}{=}}
\newcommand{\argmin}{\mathop{\mathrm{argmin}}}
\newcommand{\bb}{\mathbf{b}}
\newcommand{\bbz}{\mathbf{z}}
\newcommand{\be}{\mathbf{e}}
\newcommand{\dbb}{d\mathbf{b}}
\newcommand{\db}{db}
\newcommand{\bo}{\mathbf{\Psi}}
\newcommand{\rhobt}{\bm{\widetilde \rho}}
\newcommand{\rhob}{\bm{\rho}}
\newcommand{\s}{\mathfrak{s}}
\newcommand{\bxi}{\bm{\xi}}
\newcommand{\bpsi}{\bm{\psi}}
\newcommand{\SCap}{\mathrm{Cap}}
\newcommand{\Proj}{\mathrm{Proj}}
\newtheorem{proposition}{Proposition}[section]
\newtheorem{corollary}[proposition]{Corollary}
\newcommand{\abs}[1]{\lvert#1\rvert}
\newcommand{\bracket}[1]{\left[#1\right]}
\newcommand{\dt}{\partial_{t}}
\newcommand{\G}{\mathcal{G}}
\newcommand{\R}{\mathbb{R}}
\newcommand{\C}{\mathbb{C}}
\newcommand{\calH}{\nabla^2 H}
\newtheorem{thm}{Theorem}
\newtheorem{prop}{Proposition}[section]
\newtheorem{lem}[prop]{Lemma}
\newtheorem{rem}[prop]{Remark}
\renewcommand{\G}{\mathcal{G}}
\newcommand{\GG}{\underline{G}}
\newcommand{\ugamma}{\underline{\gamma}}
\newcommand{\K}{\mathcal{K}}
\renewcommand{\H}{\mathcal{H}}
\renewcommand{\Re}{\mathfrak{Re}}
\title[Toy Model Bound States]{Discrete Bound States in a Toy Model for Weak Turbulence and Implications for the Invariant Measure}
\author[J.~L.~Marzuola]{Jeremy~L.~Marzuola$^1$} \address{$^1$Mathematics
  Department, University of North Carolina at Chapel Hill, Chapel
  Hill, NC 27599, USA} \email{marzuola@math.unc.edu}
\author[J. C. Mattingly]{Jonathan C. Mattingly$^2$}
\address{$^2$Mathematics Department and Department of Statistical Science, Duke University, Durham, NC 27708, USA}
\email{jonathan.mattingly@duke.edu}
\begin{document}

\begin{abstract}
  A model Hamiltonian dynamical system has been derived to study
  frequency cascades in the cubic defocusing nonlinear Schr\"odinger
  equation on the torus.  Here, we explore the framework for
  exploring a canonical ensemble formulation of the dynamics
  through classification of energy minimizers for fixed mass and characterizing
  the invariant measure in a neighborhood of those minimizers.
\end{abstract}

\maketitle

\section{Introduction}
\subsection{ The Toy Model}
\label{s:intro}
In \cite{CKSTT} the authors Colliander-Keel-Staffilani-Takaoka-Tao
studied the two-dimensional defocusing cubic toroidal nonlinear Schr\"odinger
equation,
\begin{equation}
  \label{e:dcnls}
  i u_t + \Delta u - |u|^2 u = 0, \ \ u(0,x) = u_0(x) \ \text{for} \ x \in \mathbb{T}^2,
\end{equation}
by developing their ``Toy Model System'' given by the equation
\begin{equation}
  \label{e:toy_model}
  -i\dt b_j (t) = -\abs{b_j(t)}^2 b_j(t) + 2 b_{j-1}^2 \overline{b_j}(t)
  + 2 b_{j+1}^2 \overline{b_j}(t),
\end{equation}
for $j \in \mathbb{Z}$. 
Solutions of this model can be considered on the entire lattice
or on a
finite lattice of the form $j=-N,\dots,0,\dots, N$ and
$\bb=(b_{-N},\dots, b_N) \in \C^{2N+1}$ with ``boundary
conditions''
\begin{equation}
  \label{e:dirichletbc}
  b_{-(N+1)}(t) = b_{N+1}(t) = 0.
\end{equation}
When considering the entire lattice, one can obtain well-posed
dynamics by assuming that $\bb \in \ell^2
(\mathbb{Z})$. Alternatively, one can consider solutions that are only
locally finite. While such ``infinite energy'' solutions are
interesting we will not consider them here.

The $b_j$ approximate the mass associated with families of resonantly
interacting frequencies in \eqref{e:dcnls}. Roughly the $b_j(t)$ are
the coefficients of a hierarchy of resonant spatial Fourier modes for a solution $u(x,t)$ to
\eqref{e:dcnls}. In \cite[Section 5]{CMOS1}, the authors derive a
discrete Burgers type equation with a phase drag term (see
\eqref{e:toy_model_hydro} below) and studied numerically the flux to
large modes of \eqref{e:toy_model}.
pi
In \cite[Section 3]{CKSTT}, the Toy Model \eqref{e:toy_model} is studied as a Hamiltonian dynamical system with Hamiltonian 
\begin{equation}
  \label{e:hamiltonian}
  H(\bb) = \sum_{j=-N}^N \left( \frac12 | b_j |^4 - 2 \Re ( \bar{b}_j^2 b_{j-1}^2) \right)
\end{equation}  
and  symplectic structure
\begin{equation}
  \label{e:symplectic}
  i \frac{db_j}{dt} = \frac{\partial H(\bb)}{\partial \bar b_j},
  \quad j\in \mathbb{N}.
\end{equation}
Here $\Re$ gives the real part of a complex number and $\bar b$
signifies the complex conjugate of $b$.
Formerly, the above dynamics makes sense on the bi-infinite lattice
obtained by setting $N=\infty$. Using the structure of the equations,
it can be seen for any $N>0$, including $N= \infty$, that if the
initial condition $b(0)$ is compactly supported (i.e.  supported on a
finite number of nodes), then the solution $b(t)$ remains compactly
supported for all time with a region of support that does not grow
with time. To keep our arguments simple, we will concentrate on
dynamics with $N < \infty$ while keeping in mind that this captures
the dynamics of initial conditions with finite support even when
$N= \infty$.

The model \eqref{e:toy_model} shares many of the symmetries of
\eqref{e:dcnls}, including phase invariance, scaling, time translation
and time reversal.  However, many of these symmetries are redundant in
the toy model \eqref{e:toy_model} in that the only  known invariant
beyond the Hamiltonian \eqref{e:hamiltonian} is the mass quantity
\begin{equation}
  \label{e:truncmass}
  M(\bb)=  \sum_{j=-N}^N |b_j|^2.
\end{equation}

As it was first shown in  \cite[Section 4]{CMOS1}, the dynamics of
\eqref{e:toy_model}, posses many fixed points. We will explain in
Section~\ref{Sec:Stationary} that there is an infinite family of fixed
points with compact support. The first result of this paper shows that
there is a distinguished family of fixed points, supported on $3$-modes and unique up to translations, that minimizes the Hamiltonian $H$ for a given $M$. More exactly, using discrete versions of the rearrangement theorems motivated by those in the calculus of variations often applied in the continuum setting, we establish the
following theorem.
\begin{thm}
  \label{mainthm}
  For a given mass  $m>0$, there exists a 3-node solution $\bb^*$ 
  that is a minimizer of $H$ with $M(\bb^*)=m$. It is the unique minimizer of $H$ up to translations and phase rotations.
  Namely for each lattice site $k \in \{-N+1,\dots,N-1\}$ and phase $\theta \in [0,2\pi)$, there exists a 
  minimizing $\bb^*=\bb^*(m,k,\theta)$ given by 
  $b_{k-1}^* = b_{k+1}^* = \sqrt{\frac{3m}{11}}e^{i \theta}$ and 
  $b_k^* = \sqrt{\frac{5m}{11}}e^{i \theta}$ and $b_j^*=0$ for all $j$ with $|k-j|>1$. This family represents all of the minimizers of the energy $H$ for fixed mass $m$.
\end{thm}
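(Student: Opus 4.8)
The plan is to reduce the complex minimization to a real, finite‑dimensional one by aligning phases, then invoke a discrete rearrangement inequality to reduce to unimodal profiles, and finally solve the resulting constrained problem explicitly. For the first step, note that $\Re(\bar b_j^2 b_{j-1}^2)\le|b_j|^2|b_{j-1}|^2$ for every $j$, with equality iff $\bar b_j^2 b_{j-1}^2\ge 0$; hence $H(\bb)\ge\sum_j\bigl(\tfrac12|b_j|^4-2|b_j|^2|b_{j-1}|^2\bigr)$, and the lower bound is attained by any $\bb$ with the prescribed moduli and all $b_j\ge 0$. Therefore
\[
 \min_{M(\bb)=m}H(\bb)=\min\Bigl\{F(a):a_j\ge 0,\ \sum\nolimits_j a_j=m\Bigr\},\qquad
 F(a):=\tfrac12\sum\nolimits_{j}a_j^2-2\sum\nolimits_{j}a_j a_{j-1},
\]
with $a_j=|b_j|^2$. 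The feasible set is a compact simplex, so a minimizer $a$ exists.

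\emph{Reduction to unimodal profiles.} Since $\sum_j a_j^2$ is unchanged under rearranging the entries of $a$, minimizing $F$ amounts to maximizing $\sum_j a_j a_{j-1}$ over rearrangements. The discrete ``organ‑pipe'' rearrangement $a^\ast$ (the entries of $a$ reordered to have a single peak and to be non‑increasing on either side of it) satisfies $\sum_j a_j a_{j-1}\le\sum_j a^\ast_j a^\ast_{j-1}$ — a discrete Riesz‑type inequality, provable by iterating elementary two‑site exchanges that never decrease $\sum_j a_j a_{j-1}$. Hence $F(a^\ast)\le F(a)$, so we may take the minimizer $a$ to be unimodal, in particular supported on an interval $I$ (centred at a site if $|I|$ is odd, at a bond if $|I|$ is even) with $a>0$ on $I$.

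\emph{Euler--Lagrange equations and a bound on $|I|$.} On $I$ the sign constraints are inactive, so there is $\mu\in\R$ with $a_j-2a_{j-1}-2a_{j+1}=\mu$ for $j\in I$, while $a$ vanishes at the two sites flanking $I$. The homogeneous recursion $a_{j+1}=\tfrac12 a_j-a_{j-1}$ has characteristic roots $\tfrac14(1\pm\sqrt{-15})$, which lie \emph{on the unit circle} with $\cos\alpha=\tfrac14$ (so $\cos 2\alpha=-\tfrac78$, $\cos 3\alpha=-\tfrac{11}{16}$, and $2\alpha<\pi<3\alpha$). Adding the constant particular solution $-\mu/3$, and using that the (square, reflection‑symmetric) linear system consisting of the recursion, the two vanishing flanks, and $\sum_I a_j=m$ has a symmetric solution, one gets $a_j=A\bigl(\cos(\alpha d_j)-\cos(\alpha D)\bigr)$, where $d_j$ is the distance from $j$ to the centre of $I$ and $D$ the distance to the flanks. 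Positivity at the centre forces $A>0$; then unimodality demands $d\mapsto\cos(\alpha d)$ be non‑increasing along the distances occurring in $I$, and positivity demands $\cos(\alpha d_j)>\cos(\alpha D)$ throughout $I$. Since $\cos 2\alpha<\cos 3\alpha$, a short case check shows these requirements are incompatible once $|I|\ge 5$; hence $|I|\in\{1,2,3,4\}$. Solving the recursion with $\sum_I a_j=m$ in these four cases gives $F=\tfrac12 m^2,\ -\tfrac14 m^2,\ -\tfrac{7}{22}m^2,\ -\tfrac{5}{16}m^2$ for $|I|=1,2,3,4$, the third realized by $a=\tfrac{m}{11}(3,5,3)$ on three consecutive sites; as $\tfrac{7}{22}>\tfrac{5}{16}>\tfrac14>-\tfrac12$, the three‑node profile is the unique unimodal minimizer and $\min F=-\tfrac{7}{22}m^2$.

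\emph{Uniqueness, and the main obstacle.} For an arbitrary minimizer $a$ one has $F(a^\ast)\le F(a)=-\tfrac{7}{22}m^2$, so $a^\ast$ is also a minimizer; by the classification $a^\ast=\tfrac{m}{11}(3,5,3)$ up to translation, so $a$ has value multiset $\{\tfrac{3m}{11},\tfrac{3m}{11},\tfrac{5m}{11}\}$ and attains the maximal value of $\sum_j a_j a_{j-1}$ for that multiset, which (an elementary finite check over arrangements of three numbers) forces $a$ to be a translate of $\tfrac{m}{11}(3,5,3)$. Undoing the phase reduction, and absorbing the sign freedom $b_j\mapsto -b_j$ (which leaves $H$ invariant) into the phase, yields exactly the family $\bb^\ast(m,k,\theta)$ in the statement. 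The crux is the rearrangement step: identifying the correct discrete unimodal rearrangement and proving it does not decrease $\sum_j a_j a_{j-1}$, including the half‑integer‑centre (even support) case; once that is in place, the rest is essentially mechanical, the key point being that the linearized Euler--Lagrange operator has purely oscillatory modes with $\cos\alpha=\tfrac14$, which rigidly caps the width of a positive unimodal solution at three sites.
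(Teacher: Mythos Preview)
Your proof is correct and shares with the paper the initial phase alignment (reducing to the real functional $F(a)=\tfrac12\sum a_j^2-2\sum a_ja_{j-1}$ with $a_j=|b_j|^2$) and the rearrangement to unimodal profiles, but diverges in how the support size is bounded. The paper proceeds through a chain of elementary mass-transfer perturbations: a local estimate forcing $\rho_0\le\tfrac56(\rho_1+\rho_{-1})$, a truncation argument giving compact support, an explicit outer-to-center transfer cutting the support to at most five nodes, and then direct reductions from five to four to three. You instead write the interior Euler--Lagrange recursion $a_j-2a_{j-1}-2a_{j+1}=\mu$, solve it via characteristic roots $\tfrac14(1\pm i\sqrt{15})$ on the unit circle with $\cos\alpha=\tfrac14$, and use the resulting oscillation --- specifically $\cos 2\alpha<\cos 3\alpha$ (and its half-integer analogue, which follows from $2\alpha<\pi<3\alpha$) --- to show that a positive unimodal EL solution cannot exist on five or more sites. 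This spectral route is more conceptual, explains \emph{why} the width is capped (the linearized operator has purely oscillatory modes of short period), and would adapt directly to the generalized coupling $D$ in the paper's discussion section; the paper's perturbation route is more elementary and avoids solving the recursion. Your rearrangement step is sketched where the paper gives an explicit inductive construction, but the content is the same; your symmetry claim for the EL solution is justified by the invertibility of the tridiagonal matrix, which the paper records separately.
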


\begin{rem}\label{rem:b^* notation}
  For notational convenience we will write $\bb_k(m)$ for the fix point
  with $\bb^*(m,k,\theta)$ centered on the $k$th coordinate and with
  phase zero in the  $k$th coordinate. When we want to introduce a phase $\theta$ we will write
  $e^{i\theta} \bb_k(m)$. Lastly, we will denote by
  $B_k^*(m)$ the circle of fixed points $\{e^{i \theta} \bb_k(m) :
  \theta \in [0,2\pi]\}$ obtained by rotating $\bb_k(m)$ and
by  $B^*(m)=\bigcup_k B_k^*(m)$ the set of all energy minimizers with
mass $m$. When m = 1, we will drop the explicit dependence on m and simply write \( \bb_k \).
\end{rem}

We will see in the next section that there is a natural collection of statistically stationary solutions of \eqref{e:toy_model} corresponding to a family of Gibbs Measures at different temperatures. If one restricts these stationary solutions to those with finite mass $M$ but low temperatures, we would expect the solutions to concentrate around the 3-mode solution given in \Cref{mainthm}. It is then natural to ask what is the structure of the fluctuations around these minimizing 3-mode solutions. The main results of this paper show that to leading order these fluctuation are correlated in the 5-mode neighborhood of the  3-mode solution and independent white noise away from this region.

\subsection{Gibbs Measures}
To explore the statistical equilibrium of the toy model's dynamics, we begin by introducing a family of canonical Gibbs measures.  Defining circles $\mathbb{T}_j$ for
$j = 1,\dots,N$ as
\begin{multline}
  \mathbb{T}_j = \big\{ \bb = (b_{-N+1}, \dots, b_0, \dots, b_N) \ \big|  \  \\ |\bb|^2 = 1, \  |b_j| =1, \ b_k = 0 \ \text{for all} \ k \neq j \big\}, 
\end{multline}
the authors in \cite{CKSTT} point out that the flow of
\eqref{e:toy_model} leaves each
${\mathbb T}_j$ invariant.  In \cite{CMOS1}, it was observed that
\eqref{e:toy_model} also has a natural probabilistic formulation and
can be seen to have some basic recurrence properties.  There, the
authors considered the white noise measure
\begin{equation}
  \label{e:massGibbs}
  \begin{aligned}
     \frac1{Z_N} \exp\big(- \tfrac12 M(\bb)\big)\dbb
                     \qquad\text{where}\qquad   \dbb=\prod_{j=-N}^N \db_j\,,
  \end{aligned}
\end{equation}
$b_j = \alpha_j + i \beta_j$, $\db_j= d \alpha_j d \beta_j$, and  some
normalizing constant $Z_N$.\footnote{We hope that the use of $\beta$
  as the inverse temperature and $\bb_j=\alpha_j+\beta_j$ will not
  cause any confusion as the meaning should be clear from context.}
Since $M(\bb)$ is a non-negative quadratic function this Gibbs measure is a Gaussian measure.

Since the Hamiltonian $H$ is also conserved, it is tempting to consider an invariant Gibbs measure $\mu^N_{\beta}$ for the $N$-mode toy model system with 
\begin{equation}
  \label{invmeas_mu}
  \mu^N_{\beta}(\dbb) \propto e^{-\beta H(b)} \dbb.
\end{equation}
From the perspective of statistical mechanics, this would correspond
to the canonical ensemble with temperature $\frac1\beta$.

Unfortunately, while this measure is well defined locally, it cannot
be normalized to obtain a probability measure since  $\{H(\bb) : \bb
\in \C^{2N+1}\} = (-\infty,\infty)$. To see that $\H$ is not bounded
from below, and hence $\mu^N_{\beta}$ is not normalizable, we argue as follows.

In
\Cref{prop:H_bound} below, we will see that if we restrict the sphere of unit mass
$S(1) = \{ \bb \in \C^{2N+1} : M(\bb) = 1 \}$  we have $H(\bb) \geq
-\tfrac{7}{22}$ were this lower bound is realized. Hence by continuity,
there exist a open set $A_1 \subset  S(1) $ of positive Lebesgue measure so $H(\bb) \leq -\tfrac{1}{4}$
for all $\bb \in A_1$. Since $H(\lambda \bb) = |\lambda|^4 H(\bb)$, we
have that the maximum of $H$ over $A_\lambda =\{ \lambda \bb : \bb \in
A_1\}$ is bounded from above by $-\tfrac{\lambda^4}{4}$. By
increasing $\lambda$ we can make the value of $H$ on $A_\lambda$ be
come uniformly, arbitrarily negative. This construction also hints at
the fact that we can control $H$ as long as we restrict to a set where
$M$ is bounded. After generalizing the questions slightly, we will
explore this direction below.

More generally for any $F\colon \R^2\rightarrow \R$, we can define
\begin{equation}
  \label{invmeas}
  \nu^N_{\beta,F}(\dbb) \propto e^{-\beta F(M(\bb),H(\bb))} \dbb
\end{equation}
and ask what conditions on $F$ ensure that this measure be normalized to form an invariant probability measure. 

Notice that unlike the measure in \eqref{e:massGibbs}, this is not necessarily a Gaussian measure.

Our path to understanding this question begins with a simple
observation, $H(\bb)$ is bounded from above and below on $S(m)$ for
each fixed $m \geq 0$. More exactly, we have the following
result.

\begin{proposition}\label{prop:H_bound}
  For any $N >0$, we have
  \begin{align}
  \label{eq:enbds}
    -\tfrac{7}{22} M(\bb)^2 \leq H(\bb)\leq \tfrac{3}{4} M(\bb)^2 .
  \end{align}
  These bounds are sharp.
\end{proposition}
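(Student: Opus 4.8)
The plan is to rescale to the unit–mass sphere and then pass to the moduli. Since $H(\lambda\bb)=|\lambda|^4H(\bb)$ and $M(\lambda\bb)=|\lambda|^2M(\bb)$, it suffices to show $-\tfrac7{22}\le H(\bb)\le\tfrac34$ whenever $M(\bb)=1$; multiplying back by $|\lambda|^2=M(\bb)$ then restores the factors $M(\bb)^2$. On the unit sphere set $a_j=|b_j|^2\ge 0$, so $\sum_j a_j=1$. Using $|\Re(\bar b_j^2 b_{j-1}^2)|\le |b_j|^2|b_{j-1}|^2=a_ja_{j-1}$ termwise, one obtains the sandwich
\[
  \tfrac12\sum_j a_j^2-2\sum_j a_ja_{j-1}\ \le\ H(\bb)\ \le\ \tfrac12\sum_j a_j^2+2\sum_j a_ja_{j-1},
\]
so both halves of the proposition reduce to estimates for nonnegative sequences summing to $1$.

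For the upper bound I would split the lattice by the parity of the index: put $E=\sum_{j\,\text{even}}a_j$ and $O=\sum_{j\,\text{odd}}a_j$, so $E+O=1$. Every adjacent product pairs an even index with an odd one, and all terms are nonnegative, so $\sum_j a_ja_{j-1}\le EO$; likewise $\sum_j a_j^2\le E^2+O^2$. Hence
\[
  \tfrac12\sum_j a_j^2+2\sum_j a_ja_{j-1}\ \le\ \tfrac12(E^2+O^2)+2EO\ =\ \tfrac12(E+O)^2+EO\ \le\ \tfrac12+\tfrac14\ =\ \tfrac34 .
\]
Chasing the equality conditions ($E=O=\tfrac12$, exactly one nonzero even and one nonzero odd mode, those two adjacent, and the phases arranged so that $\bar b_j^2 b_{j-1}^2$ is real and negative on that edge) shows the bound is attained, e.g.\ by $b_0=\tfrac1{\sqrt2}$, $b_1=\tfrac{i}{\sqrt2}$ with $H(\bb)=\tfrac14+\tfrac12=\tfrac34$, and after rescaling by the full two–mode family.

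The lower bound is where the work lies. I need $\tfrac12\sum_j a_j^2-2\sum_j a_ja_{j-1}\ge-\tfrac7{22}\bigl(\sum_j a_j\bigr)^2$ for $a_j\ge0$; using $\bigl(\sum_j a_j\bigr)^2=\sum_j a_j^2+2\sum_{j<k}a_ja_k$ and clearing denominators, this is equivalent to the copositivity statement
\[
  15\sum_j a_ja_{j-1}\ \le\ 9\sum_j a_j^2+7\sum_{\substack{j<k\\ k-j\ge 2}}a_ja_k .
\]
The three–mode case is elementary: for $x,y,z\ge0$,
\[
  9(x^2+y^2+z^2)+7xz-15(xy+yz)=9y^2-15(x+z)y+9(x^2+z^2)+7xz
\]
is a quadratic in $y$ with positive leading coefficient and discriminant $225(x+z)^2-36\bigl(9(x^2+z^2)+7xz\bigr)=-99(x-z)^2\le0$, hence $\ge 0$, with equality exactly when $x=z$ and $y=\tfrac53 x$, i.e.\ $(x,y,z)\propto(3,5,3)$ — precisely the profile of \Cref{mainthm}. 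To reach arbitrary support I would (a) note that $\tfrac12\sum_j a_j^2-2\sum_j a_ja_{j-1}$ attains its minimum over $\{a\ge0,\ \sum_j a_j=1\}$ (compactness); (b) use a discrete rearrangement inequality to assume the minimizer is unimodal on an interval; (c) rule out a minimizer supported on more than three consecutive sites, after which the three–mode estimate finishes and pins the minimizers down to $B^*(1)$, giving $H(\bb)\ge-\tfrac7{22}M(\bb)^2$ with equality exactly on $B^*(m)$ after rescaling. Step (c) can also be bypassed by quoting \Cref{mainthm}: its minimizer $\bb^*(1,k,\theta)$ has in–phase entries with $|b_{k\pm1}|^2=\tfrac3{11}$ and $|b_k|^2=\tfrac5{11}$, so $\Re(\bar b_j^2 b_{j-1}^2)=a_ja_{j-1}$ there and $H(\bb^*)=\tfrac12\cdot\tfrac{43}{121}-2\bigl(\tfrac{15}{121}+\tfrac{15}{121}\bigr)=-\tfrac7{22}$.

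The routine ingredients are the scaling reduction and the parity estimate; the real obstacle is step (c), the copositivity in every dimension. It cannot be obtained from an eigenvalue bound on a sphere, since the quadratic form $\tfrac12\sum_j a_j^2-2\sum_j a_ja_{j-1}+\tfrac7{22}\bigl(\sum_j a_j\bigr)^2$ is genuinely indefinite — on a block $(\dots,3,5,3,-t,\dots)$ it equals $-t+\tfrac9{11}t^2$, which is negative for small $t>0$ — so one must either run the rearrangement/collapsing argument carefully or lean on the variational characterization of the energy minimizers in \Cref{mainthm}. Tracking the equality cases in the parity estimate and in the three–mode discriminant then yields the sharpness of both bounds.
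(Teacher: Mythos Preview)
Your proof is correct. For the lower bound you end up exactly where the paper does: the paper's proof is one line that defers to \Cref{mainthm}, and your proposal explicitly offers that route (``Step (c) can also be bypassed by quoting \Cref{mainthm}''). The alternative you sketch---compactness, discrete rearrangement to unimodality, then collapsing to three modes---is precisely the content of Section~\ref{s:enmin}, so you are either quoting the theorem or reproving it. Your three-mode discriminant computation $-99(x-z)^2\le 0$ is a tidy independent check of the boundary case, though it is not needed once \Cref{mainthm} is available.

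For the upper bound your route is genuinely different and more complete than the paper's. The paper simply asserts that the maximum is realized by a two-mode state and moves on, with no argument given. Your even/odd split $\sum_j a_ja_{j-1}\le EO$ and $\sum_j a_j^2\le E^2+O^2$, followed by $\tfrac12(E^2+O^2)+2EO=\tfrac12(E+O)^2+EO\le\tfrac34$, is a clean and self-contained proof; tracking the equality cases (one nonzero even mode, one adjacent nonzero odd mode, $E=O=\tfrac12$, and phases chosen to make the cross term real negative) identifies the maximizers directly. This buys you an actual argument where the paper has none, at essentially no cost. Your maximizing example $b_0=\tfrac1{\sqrt2}$, $b_1=\tfrac{i}{\sqrt2}$ is correct; note that the phase factor $i$ is essential, since real entries of opposite sign still give $\Re(\bar b_j^2 b_{j-1}^2)>0$.
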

\begin{proof}
  The lower bound in \eqref{eq:enbds} follows from Theorem \ref{mainthm} (see \eqref{eqn:Hminval}).  The upper bound in \eqref{eq:enbds} it turns out arises from a two mode solutions of the form $(\sqrt{M}/2,-\sqrt{M}/2)$, but it plays no real role in what follows here as in this result we focus on the lower bound in relation to the concentration of the Gibbs measure.
\end{proof}

\begin{rem}
    While we are concerned with the minimum energy solution, in a forthcoming statistical mechanics/random dynamics study \cite{panos2025stats}, the authors observe interesting dynamics in a region of energy and mass where the long time behavior appears to be dominated by the energy {\it maximizers} of our problem.  Our rearrangement methods used in the proof of Theorem \ref{mainthm} should also be applicable to assess the properties and classification of such states in a similar manner, but we leave such a study to future work.
\end{rem}

From this we immediately obtain the following sufficient condition for $\nu^N_{\beta,F}$ to be well defined.
\begin{corollary}Defining $F^*(m)=\max(F(m,C_Nm^2), F(m,-c_Nm^2)$ if
  \begin{align*}
    \int e^{-\beta F^*(M(\bb))}\dbb < \infty  
  \end{align*}
  then $\nu^N_{\beta,F}(\C^{2N+1}) < \infty$ and thus can be normalized to be an invariant probability measure.
\end{corollary}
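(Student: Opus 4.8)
The plan is to prove the corollary by a pointwise domination of the integrand followed by a one‑line integration, using \Cref{prop:H_bound} to trade the energy $H(\bb)$ for an explicit function of the mass $M(\bb)$. The first thing to notice is that $F^*(M(\bb))$ depends on $\bb$ only through $M(\bb)$, so the hypothesis $\int_{\C^{2N+1}} e^{-\beta F^*(M(\bb))}\,\dbb<\infty$ is already a self‑contained statement about a radial function on $\C^{2N+1}\cong\R^{4N+2}$; it encodes whatever growth of $F$ in its mass slot is needed to overcome the Lebesgue volume growth $\abs{\bb}^{4N+1}$, and nothing further need be said about it. Thus the entire content reduces to the pointwise inequality
\[
  e^{-\beta F(M(\bb),H(\bb))}\;\le\; e^{-\beta F^*(M(\bb))}\qquad\text{for every }\bb\in\C^{2N+1},
\]
after which integrating against $\dbb$ yields $\nu^N_{\beta,F}(\C^{2N+1})\le\int e^{-\beta F^*(M(\bb))}\,\dbb<\infty$.

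To get the pointwise bound, write $m=M(\bb)$. By \Cref{prop:H_bound} the energy lies in the window $H(\bb)\in[-c_N m^2,\,C_N m^2]$ with $c_N=\tfrac7{22}$ and $C_N=\tfrac34$, so the inequality follows once one knows that $h\mapsto-\beta F(m,h)$ attains its maximum over this window at one of the two endpoints; that maximum is then exactly $-\beta F^*(m)$ by the definition of $F^*$. This is the only step that is not purely formal, and it is where a structural hypothesis on the admissible $F$ must enter: it holds, for instance, whenever $F(m,\cdot)$ is monotone on $[-c_N m^2,C_N m^2]$ for each $m$, or more generally whenever $h\mapsto-\beta F(m,h)$ is quasi‑convex there (e.g.\ $F(m,\cdot)$ convex with $\beta\le0$, or concave with $\beta\ge0$), the sign of $\beta$ selecting which endpoint realizes the $\max$ in $F^*(m)=\max\bigl(F(m,C_Nm^2),F(m,-c_Nm^2)\bigr)$. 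Under such an assumption on $F$ the displayed bound is immediate.

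With the pointwise bound in hand the corollary follows: integrating it gives $\nu^N_{\beta,F}(\C^{2N+1})<\infty$, and the total mass is strictly positive since $e^{-\beta F(M,H)}>0$ everywhere, so $\nu^N_{\beta,F}$ normalizes to a probability measure. That it is moreover invariant under the flow \eqref{e:toy_model} is built into the framework rather than into this estimate: the Hamiltonian flow \eqref{e:symplectic} preserves Lebesgue measure $\dbb$ by Liouville's theorem, and both $M$ and $H$ — hence $F(M,H)$ — are conserved, so $e^{-\beta F(M,H)}\,\dbb$ is an invariant finite measure. The main, and essentially the only, obstacle is therefore the non‑formal step above: pinning down the class of $F$ (equivalently, the range of $\beta$) for which the extreme value of $-\beta F(m,\cdot)$ across the admissible energy window $[-c_Nm^2,C_Nm^2]$ is controlled by $-\beta F^*(m)$; everything else is bookkeeping.
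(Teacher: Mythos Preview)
Your strategy---pointwise domination via \Cref{prop:H_bound} followed by integration---is exactly the intended one, and you are right that an unstated structural hypothesis on $F$ is needed for the pointwise step. However, your proposed hypotheses do not deliver the inequality for $\beta>0$, the regime the paper actually works in. Write $a=F(m,-c_Nm^2)$ and $b=F(m,C_Nm^2)$, so $F^*(m)=\max(a,b)$. The desired bound $e^{-\beta F(m,h)}\le e^{-\beta F^*(m)}$ reads, for $\beta>0$, as $F(m,h)\ge\max(a,b)$ for \emph{every} $h$ in the window---which forces $F(m,\cdot)$ to be essentially constant there. Your example ``$F(m,\cdot)$ concave with $\beta\ge0$'' fails for exactly this reason: concavity makes $-\beta F(m,\cdot)$ convex, so its maximum over the window is $\max(-\beta a,-\beta b)$, whereas $-\beta F^*(m)=-\beta\max(a,b)=\min(-\beta a,-\beta b)$; these agree only if $a=b$. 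The monotone case has the same defect. Only your $\beta\le0$ example actually yields the claimed identity, and that sign is not the one of interest.

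The underlying problem is a $\max$/$\min$ slip in the corollary as stated: for $\beta>0$ one should take $F^*(m)=\min\bigl(F(m,C_Nm^2),F(m,-c_Nm^2)\bigr)$. With that correction, monotonicity (or more generally quasi-concavity) of $F(m,\cdot)$ in $h$ puts the minimum over $[-c_Nm^2,C_Nm^2]$ at an endpoint, giving $F(m,h)\ge F^*(m)$ and hence the pointwise domination $e^{-\beta F(M,H)}\le e^{-\beta F^*(M)}$; your argument then goes through verbatim.
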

In light of \Cref{prop:H_bound}, we see that $|H|$ is bounded on any
set where $M$ is bounded (which is related to the fact that if a sequence is in $\ell^2$, then it is $\ell^4$). This motivates the consideration of the
measure $\mu^N_{\beta}(\dbb)$ conditioned to lie on the sphere of
radius $\sqrt{m}$ given by $S(m)=\{
\bb \in \C^{2N+1} : M(\bb)=m\}$.

We will focus here on what can be thought of as the {\it canonical Gibbs measure}.  Denote by $\Gamma_m(\dbb)$ the Hausdorff measure
on $\C^{2N+1}$, viewed as $\R^{4N+2}$, restricted to $S(m)$ and
normalized so that $\Gamma_m(S(m))= m^{2N +\frac12}\s_{4N+2}$ where
$\s_{4N+2}= \frac{2\pi^{2N+1}}{\mathrm{Gamma}(2N+1)}$ surface area of
a real unit sphere in $4N+2$ dimensions and $\mathrm{Gamma}$ the standard Gamma Function. 

Since for $m >0$, $S(m)$ is smooth Riemannian manifold, the  volume measure on the surface  on  $S(m)$ normalized to have total measure $\s_{4N+2}$ coincides Hausdorff measure  $\Gamma_m(\dbb)$ coincides with the standard Riemannian volume measure on $S(m)$. We have chosen the normalization so that $\Gamma_m(\dbb)$ coincides with the restriction of the standard Lebesgue measure from $\C^{2N+1}\sim\R^{4N+2}$ to $S(m)$ or equivalently the Riemannian surface measure on $S(m)$ inherited by restricting the standard metric on $\C^{2N+1}$ to $S(m)$. In particular,  we have the following disintegration result for any $f\colon \C^{2N+1} \rightarrow \R$
\begin{align*}
  \int_{\C^{2N+1}} f(\bb)\dbb &= \int_0^\infty\Big(\int_{S(r^2)} f(\bb) \Gamma_{r^2}(\dbb)\Big) dr \\ &=\int_0^\infty \frac{1}{2 \sqrt{m} }\Big(\int_{S(m)} f(\bb) \Gamma_m(\dbb)\Big) dm .
\end{align*}
The first equality is just Fubini's Theorem  while the second equality shows the effect of changing the variable of integration from $r$ to $m=r^2$.

Applying this representation to the measure $\mu^N_{\beta}(\dbb)$ from \eqref{invmeas_mu} with the assumption that $\int f(\bb) \mu^N_{\beta}(\dbb)< \infty $, we see that
\begin{align*}
  \int_{\C^{2N+1}} f(\bb)  \mu^N_{\beta}(\dbb)&\propto\int_0^\infty \frac{1}{2 \sqrt{m} }\Big(\int_{S(m)} f(\bb) \mu^N_{\beta,m}(\dbb)\Big) dm ,
\end{align*}
where
\begin{align*}
   \mu^N_{\beta,m}(\dbb)\eqdef e^{-\beta H(\bb)}\Gamma_m(\dbb).
\end{align*}
From this representation it is clear that it is sufficient to
understand the measures $ \mu^N_{\beta,m}(\dbb)= e^{-\beta
  H(\bb)}\Gamma_m(\dbb)$ to understand $\mu^N_{\beta}(\dbb)= e^{-\beta
  H(\bb)}\dbb$ from \eqref{invmeas}. The conditioned measures
$\mu^N_{\beta,m}(\dbb)$ have the advantage that they can always be
normalized to a probability measure. The measure $\mu^N_{\beta,m}$ gives
the canonical ensemble at temperature $\frac1\beta$ and fixed mass $m$.

A similar representation can be written for  $\nu^N_{\beta,F}$ and used to better understand its structure. In the interest of simplicity, we will concentrate on   $\mu^N_{\beta}(\dbb)$, though analogous calculations can be made for $\nu^N_{\beta,F}$. We will be particularly interested in the structure of $ \mu^N_{\beta,m}$ as $\beta \rightarrow \infty$ which corresponds to the low temperature limit.

\begin{rem}
  The marginal measure of $\mu^N_{\beta}$ in the mass $m$ is always well defined and can be written as
  $\bar\mu^N_{\beta}([a,b])= \mu^N_{\beta}\big(\bigcup_{m \in[a,b]}
  S(m)\big)$ for $ 0\leq a \leq b< \infty$. Then we have the
  disintegration $\mu^N_{\beta}(\dbb)= \mu^N_{\beta,m}(\dbb)
  \bar\mu^N_{\beta}(dm)$ and we see that $\bar\mu^N_{\beta}(dm)
  \propto  \frac{1 }{2 \sqrt{m} }dm$.
\end{rem}

For notational convenience for any test function $\phi\colon \C^{2N+1}
\rightarrow \R$, we define 
\begin{align}\label{mu-beta-m}
  \mu_{\beta,m}^N\phi\eqdef 
\int_{S(m)} \phi(\bb) e^{-
   \beta  H(\bb)  } \Gamma_m(\dbb). 
\end{align}
We then have to following result which will be implied by the more
general result given in  \Cref{prop:avgGeneral} from \Cref{sec:rotinv}.

\begin{proposition}\label{lem:avg} For any $\theta$ and $\bb \in \C^{2N+1}$, 
  $H(e^{i\theta}\bb) =  H(\bb)$. As a consequence 
  for any test function $\phi\colon \C^{2N+1}
\rightarrow \R$, $ \mu_{\beta,m}^N\phi =  \mu_{\beta,m}^N\overline \phi$
where 
\begin{align}\label{eq:average}
   \overline \phi (\bb) = \frac1{2\pi} \int_0 ^{2\pi} \phi(e^{i\theta}\bb) 
  d \theta .
\end{align}
\end{proposition}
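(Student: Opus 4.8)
The plan is to separate the statement into its two halves: the pointwise identity $H(e^{i\theta}\bb)=H(\bb)$, which is an immediate inspection of \eqref{e:hamiltonian}, and the averaging identity, which follows by a change of variables on $S(m)$ under the rotation $R_\theta\colon\bb\mapsto e^{i\theta}\bb$ together with Fubini's theorem. For the first half, writing $b_j'=e^{i\theta}b_j$, each summand in \eqref{e:hamiltonian} transforms as $|b_j'|^4=|b_j|^4$ and $\bar b_j'^{\,2}b_{j-1}'^{\,2}=e^{-2i\theta}\bar b_j^{\,2}\,e^{2i\theta}b_{j-1}^{\,2}=\bar b_j^{\,2}b_{j-1}^{\,2}$, so every term, and hence $H$, is left unchanged; this is just the phase invariance of the toy model noted in Section~\ref{s:intro}.

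Next I would record the two structural facts that make the averaging work. First, $R_\theta$ is a linear isometry of $\C^{2N+1}\cong\R^{4N+2}$ — in real coordinates it is the block-diagonal orthogonal matrix rotating each coordinate plane through angle $\theta$ — so it preserves $M$, hence maps $S(m)$ onto itself, and, being an isometry of the ambient Euclidean space, its restriction to $S(m)$ is an isometry of $S(m)$ with its induced Riemannian metric and therefore preserves the surface measure $\Gamma_m$ (which the normalization is chosen to make exactly this Riemannian volume). Second, $H$ is $R_\theta$-invariant by the previous paragraph. Substituting $\bb\mapsto e^{i\theta}\bb$ in the integral defining $\mu_{\beta,m}^N$ and using these two facts gives, for every fixed $\theta$,
\begin{align*}
  \mu_{\beta,m}^N\phi=\int_{S(m)}\phi(\bb)\,e^{-\beta H(\bb)}\,\Gamma_m(\dbb)
  =\int_{S(m)}\phi(e^{i\theta}\bb)\,e^{-\beta H(\bb)}\,\Gamma_m(\dbb).
\end{align*}

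Finally I would average this identity over $\theta\in[0,2\pi)$ and exchange the order of integration. For $m>0$ the sphere $S(m)$ is compact, $e^{-\beta H}$ is continuous hence bounded on it, and $\phi$ is bounded, so $(\theta,\bb)\mapsto\phi(e^{i\theta}\bb)\,e^{-\beta H(\bb)}$ is integrable on $[0,2\pi)\times S(m)$ and Fubini applies, yielding
\begin{align*}
  \mu_{\beta,m}^N\phi=\int_{S(m)}\Big(\tfrac1{2\pi}\int_0^{2\pi}\phi(e^{i\theta}\bb)\,d\theta\Big)e^{-\beta H(\bb)}\,\Gamma_m(\dbb)=\mu_{\beta,m}^N\overline\phi.
\end{align*}
There is no genuine obstacle here: the only points needing a word of care are the measure-preservation of $R_\theta$ on $S(m)$, which is automatic from its being an ambient isometry, and the integrability justifying Fubini, which is automatic on the compact sphere. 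The same argument with $e^{i\theta}$ replaced by whatever symmetry group is relevant is presumably the content of the more general \Cref{prop:avgGeneral}.
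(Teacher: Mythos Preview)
Your proof is correct and follows essentially the same route as the paper, which proves the result as a special case of \Cref{prop:avgGeneral}: verify $H\circ R_\theta=H$ from \eqref{e:hamiltonian}, note that $R_\theta$ preserves $\Gamma_m$, change variables in the integral, then average over $\theta$. Your justification of the measure-invariance (via $R_\theta$ being an ambient isometry) and of Fubini is in fact slightly more explicit than the paper's.
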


\subsection{Low temperature behavior}

We are interested in the "low temperature
limit" of the measure as $\beta \rightarrow \infty$ while the mass $M(\bb)$ is
constrained to be fixed.  On the entire lattice, since $H(\bb)< \infty$
implies $M(\bb)<\infty$, but not the converse, constraining $M$ does not
limit the value of $H$. However, the measures given in \eqref{invmeas}
preferentially weights states with lower $H$. Hence, it would not be
surprising that when restricting to a shell of constant $M$ as
$\beta \rightarrow \infty$, $\mu_N^{\beta}$ concentrates around the
states $\bb$, which minimize $H(\bb)$ for constant mass
$M$.

\begin{thm}
\label{thm:Gibbs}  For any smooth test function $\phi\colon \C^{2N+1}
\rightarrow \R$, we have that
  \begin{align*}
  \frac{e^{\beta h_*(m)}}{\beta^N}  \mu_{\beta,m}^N\phi  \xlongrightarrow{\beta\rightarrow\infty}
 \sum_{k=-N+1}^{N-1}
    \int_0^{2\pi}\phi(e^{i\theta}\bb_k^*)d\theta .
  \end{align*}
  In particular,
  \begin{align*}
     \frac{ \mu_{\beta,m}^N}{\mu_{\beta,m}^N(S(m))}  
  \end{align*} 
is a probability measure that  converges to the uniform measure on the set
of three mode solutions with mass $m$  that minimizer the energy $H$.  
\end{thm}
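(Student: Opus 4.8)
The plan is to obtain the limit by the Laplace method on the compact manifold $S(m)$, localizing the integral $\mu_{\beta,m}^{N}\phi=\int_{S(m)}\phi(\bb)\,e^{-\beta H(\bb)}\,\Gamma_{m}(\dbb)$ at the minimizing set identified in \Cref{mainthm}. That set is $B^{*}(m)=\bigcup_{k=-N+1}^{N-1}B_{k}^{*}(m)$, a finite disjoint union of smooth circles on which $H\equiv h_{*}(m)$, each $B_{k}^{*}(m)$ being the phase orbit of $\bb_{k}(m)$. Using \Cref{lem:avg} one may assume $\phi$ is phase-invariant, hence constant along each circle; then the contribution of a fixed circle should factor as the value of $\phi$ there, times the circle's length, times a Gaussian integral over the directions normal to the circle inside $S(m)$. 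Summing over the $2N-1$ circles and undoing the phase average yields $\sum_{k}\int_{0}^{2\pi}\phi(e^{i\theta}\bb_{k}(m))\,d\theta$, with the renormalization $e^{\beta h_{*}(m)}/\beta^{N}$ tuned to the resulting Laplace asymptotics; the statement about the normalized probability measure then follows by taking the ratio with the same limit for $\phi\equiv 1$.

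The first step is a uniform quadratic lower bound on $S(m)$: there is $c=c(m,N)>0$ so that $H(\bb)-h_{*}(m)\geq c\,\operatorname{dist}_{S(m)}(\bb,B^{*}(m))^{2}$ for all $\bb\in S(m)$. Outside a fixed tubular neighbourhood of $B^{*}(m)$ this follows from compactness of $S(m)$ (this uses $N<\infty$), continuity of $H$, and the fact from \Cref{mainthm} that $H>h_{*}(m)$ strictly off $B^{*}(m)$; inside the tube it is the content of the local Hessian analysis below. Granting it, the portion of the integral over $\{\operatorname{dist}_{S(m)}(\cdot,B^{*}(m))>\delta\}$ is $O(e^{-\beta(h_{*}(m)+\eta)})$ for some $\eta=\eta(\delta)>0$, which is negligible against the $e^{-\beta h_{*}(m)}\beta^{-N}$ scale of the answer, and on the remaining tube $\phi$ may be replaced by its restriction to $B^{*}(m)$ with an $o(1)$ error.

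The technical heart is the Hessian of $H|_{S(m)}$ at a fixed $\bb_{k}^{*}=\bb_{k}(m)$. Because $\bb_{k}^{*}$ is supported on the three sites $k-1,k,k+1$, the ambient Hessian block-decomposes: a $6$-dimensional block on those three sites, decoupled $2$-dimensional blocks on the neighbours $k\pm2$ coming only from the cross-terms $-2\Re(\bar b_{k\pm2}^{2}b_{k\pm1}^{2})$, and the identically vanishing ambient Hessian on all remaining sites, where $H$ is purely quartic. The constraint $M(\bb)=m$ contributes to the restricted Hessian the curvature term $v\mapsto-\lambda\lvert v\rvert^{2}$, with Lagrange multiplier $\lambda=4h_{*}(m)/m<0$ ($\bb_{k}^{*}$ being critical for $H|_{S(m)}$ and $H$ homogeneous of degree four); it is exactly this term that turns the purely quartic ambient behaviour on the far sites into genuine quadratic growth along $S(m)$. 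Substituting $\lvert b_{k\pm1}^{*}\rvert^{2}=3m/11$ and $\lvert b_{k}^{*}\rvert^{2}=5m/11$, the resulting finite and explicit linear algebra should show that $H|_{S(m)}$ has at $\bb_{k}^{*}$ exactly one null direction, the phase direction $i\bb_{k}^{*}$ tangent to $B_{k}^{*}(m)$ (as it must by phase invariance), and is strictly positive on the complementary normal space — the quantitative version of the uniqueness in \Cref{mainthm}. Passing to a Morse--Bott normal form along the circle and integrating the resulting Gaussian over the normal variables gives the contribution $e^{-\beta h_{*}(m)}\beta^{-N}\big(\int_{0}^{2\pi}\phi(e^{i\theta}\bb_{k}^{*})\,d\theta\big)(1+o(1))$, the normalization of $\Gamma_{m}$ being chosen so that the leading constant is $1$; the two extreme centers $k=\pm(N-1)$ are handled the same way, with one of the neighbour blocks replaced by a far-site block.

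I expect the main difficulty to be the coercivity estimate together with the uniform non-degeneracy of the transverse Hessian: one must exclude any flat direction other than the phase rotation, uniformly along each circle and in $k$, and then splice the near-minimizer Gaussian regime to the exponentially small bulk into a single expansion with a clean remainder. The far sites deserve particular care, since there $H$ is quartic in the ambient variables and it is only the mass constraint — the $O(\lvert v\rvert^{2})$ price of carrying mass away from the support — that restores the quadratic control the Laplace method requires.
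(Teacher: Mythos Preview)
Your proposal is correct and follows the same Laplace/Morse--Bott route as the paper: localize $\mu_{\beta,m}^N$ around the circles $B_k^*(m)$ and control the integral by the transverse Hessian of $H|_{S(m)}$. Your Lagrange-multiplier identification $\lambda=4h_*(m)/m=-\tfrac{14}{11}m$ is exactly \eqref{eqn:lagmult}, and your block decomposition of the Hessian (three-site core, two neighbour blocks, far sites driven purely by the constraint term) is what the paper computes explicitly in \Cref{thm:Hessprops} and \Cref{sec:Hessian}. The paper organizes the same computation slightly differently: rather than invoking an abstract Morse--Bott normal form, it introduces explicit spherical-cap coordinates (\Cref{sec:LocalCoordinates}) and a partition of unity, proves that $e^{\beta h_*(m)}\mu_{\beta,m}^N\phi$ is approximated by the Gaussian-like measure $\widehat\gamma_{\beta,m}^N\phi$ of \eqref{eq:gamma-hat} (\Cref{thm:Gibbs_precise}), and then analyzes $\widehat\gamma_{\beta,m}^N$ separately (\Cref{rem:contration}).

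The one place where your argument is thinner than the paper's is the leading constant. Your assertion that ``the normalization of $\Gamma_m$ [is] chosen so that the leading constant is $1$'' is not justified: $\Gamma_m$ is the fixed Riemannian surface measure on $S(m)$, and a bare Laplace expansion produces a factor $(\det\mathrm{Hess}_\perp)^{-1/2}$ together with powers of $2\pi$ that you have not tracked. This actually matters, because --- as you yourself note --- the extreme centers $k=\pm(N-1)$ have one neighbour block (eigenvalues $\tfrac{2m}{11},\tfrac{26m}{11}$) replaced by a far-site block (both eigenvalues $\tfrac{14m}{11}$), so the transverse determinant, and hence the Laplace weight, is genuinely $k$-dependent. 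The paper sidesteps computing any such constant by leaving the leading term packaged as $\widehat\gamma_{\beta,m}^N\phi$; to land on the exact right-hand side as stated you would need either the same repackaging or an honest bookkeeping of the determinants.
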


Using the same framework, we will establish the following characterization
of the invariant measure for this problem, stated here as a
meta-theorem.  A more precise statement will be given in Section
\ref{Sec:Meas}.

We begin by recalling from \Cref{rem:b^* notation}, that $B^*(m)$ is the
set of all minimizers with mass $m$ and observe that this a compact
set in $\C^{2N+1}$ consisting of the $2N-1$ sets $B_k^*$ that are
disjoint circles. The distance between the set $B^*(m)$ and any point $\bb$ is well
defined for all $\bb$. This minimal distance is achieved at a unique
point, denoted $\widehat \bb^*(\bb)$, except for an exceptional set made
of planes in $\C^{2N+1}$ of at most  ${4N+1}$ complex
dimensions.\footnote{Indeed, if a vector $\bb$ is equidistant from two elements of our set, it means that there exists some complex vector $\bbz$ such that $\Re \langle \bbz, \bb \rangle = 0.$}

\begin{rem}\label{rem:non-unique_distance}
Observe that this exceptional set as well as its 
intersections with the sphere $S(m)$ are of zero Lebesgue measure in 
$\C^{2N+1}$ and $S(m)$. Hence this non-uniqueness can either be 
ignored as we will only be integrating the function  $\widehat
\bb_*$ against  Lebesgue measure in 
$\C^{2N+1}$ and $S(m)$ or it can be ``defined away'' by choosing a
measurable selection for the non-unique points. 
\end{rem}
Thus with the caveats in \Cref{rem:non-unique_distance}, we define
\begin{align}
  \label{eq:hat_b*}
  \widehat \bb^*(\bb) = \argmin_{\bb^* \in B^*} |\bb - \bb^*|,
\end{align}
and consider the orthogonal projection away from $\widehat \bb^*(\bb)$
given by 
\[
\Proj_{\bb^*}^\perp(\bb)=\bb - \Proj_{\bb^*}^\perp(\bb)
\]
where
\begin{align}
  \label{eq:hat_b*proj}
  \Proj_{\bb^*}(\bb)=\big \langle \bb,  \tfrac{\widehat \bb^*(\bb)}{|\widehat \bb^*(\bb)|} \big \rangle \tfrac{\widehat \bb^*(\bb)}{|\widehat \bb^*(\bb)|}.
\end{align}
We can then define the function $G\colon \C^{2N+1} \rightarrow [0,\infty)$ by
\begin{align}\label{eq:G}
  G(\bb)\eqdef \frac12 \big\langle \big(\tfrac{14}{11}M(\bb) I
  +\nabla^2 H(\widehat \bb^*(\bb))\big) \Proj_{\bb^*}^\perp(\bb),  \Proj_{\bb^*}^\perp(\bb) \big\rangle.
\end{align}
Note, here and throughout we take the $\langle {\bf u} , {\bf v} \rangle = \Re ({\bf u} \cdot {\bf v}) $ to be the inner-product on the full $4N+2$ dimensional space of real vectors such that ${\bf u}$ and $i {\bf u}$ are orthogonal to each other.

We now use the  function $G$ to specify a ``Gaussian-like'' measure
$\gamma_{\beta,m}^N$ on $\C^{2N+1}$, defined by
\begin{align}\label{eq:gaussian}
  \gamma_{\beta,m}^N \phi \eqdef \int_{S(m)} \phi(\bb) e^{-\beta G(\bb)}\Gamma_m(\dbb)
\end{align}
for bounded functions $\phi\colon S(m) \rightarrow \R$. The measure
$\gamma_{\beta,m}^N$ capture the deviation from the set $B^*$ in the ambient
space $\C^{2N+1}$ but wrapped around the sphere $S(m)$ in the spirit
of the von Mises–Fisher distribution.  This will become clearer when we analyze the measure further in Section \ref{Sec:Meas}.

In defining $G$ we could have equally used the intrinsic geodesic
distance on the sphere $S(m)$ to define the measure. As we will see in
\Cref{sec:intrinsicG}, this will produce a slightly different
Gaussian-like measure $\ugamma_{\beta,m}$ defined using a function
$\GG$ defined using the intrinsic distance. We will see that this
measure, any many other related formulations are all equivalent as
$\beta \rightarrow \infty$. We postpone discussion of this alternative
measure to \Cref{sec:intrinsicG} and \Cref{lem:intrisicGamma}.

With the definition of $G$ given, we now state an informal version of a result
capturing the fluctuations around the limiting result given in
\Cref{thm:Gibbs}.

\begin{thm}\label{thm:LLN}
  Fix and $m >0$. For any smooth $\phi \colon S(m) \rightarrow \R$ , we have that
  \begin{align*}
  \frac{e^{\beta h^*(m)}}{\beta^{N}}  \mu_{\beta,m}^N \phi 
    \xrightarrow{\beta \rightarrow \infty}   \frac1{\beta^{N}}  \gamma_{\beta,m}^N\phi
  \end{align*}
  and the expression on right-hand side is uniformly bound from above
  and below as $\beta \rightarrow \infty$. In particular,
  \begin{align*}
     \frac1{\beta^{N}}  \gamma_{\beta,m}^N\phi 
 \xrightarrow{\beta \rightarrow \infty} 
     \sum_{k=-N+1}^{N-1}
    \int_0^{2\pi}\phi(e^{i\theta}\bb_k^*)d\theta,
  \end{align*}
  where the right-hand side should be compared with the expression in \Cref{thm:Gibbs}.
\end{thm}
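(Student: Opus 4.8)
The plan is to perform a Laplace-type (saddle-point) analysis of the integral $\mu_{\beta,m}^N\phi$ in the low-temperature limit, localizing near the minimizing set $B^*(m)$. The first step is to split $S(m)$ into a union of small geodesic neighborhoods $U_k$ of the circles $B_k^*(m)$, $k=-N+1,\dots,N-1$, together with a complementary region $V$ bounded away from $B^*(m)$. On $V$ one has $H(\bb) \geq h_*(m) + \delta$ for some $\delta>0$ by compactness and \Cref{mainthm} (which identifies the minimizers and the minimum value $h_*(m) = -\tfrac{7}{22}m^2$), so the contribution of $V$ is $O(e^{-\beta(h_*(m)+\delta)})$, which is exponentially negligible after multiplying by $e^{\beta h_*(m)}/\beta^N$. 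The real work is on each $U_k$.

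On $U_k$, I would introduce adapted coordinates: one coordinate $\theta$ along the circle $B_k^*(m)$ (the phase direction, which is a flat direction of $H$ by \Cref{lem:avg}), and the remaining $4N$ coordinates transverse to the circle inside $S(m)$. Using \Cref{lem:avg} the integrand is $S^1$-invariant, so the $\theta$-integral just contributes the factor $\int_0^{2\pi}$ and we reduce to a $4N$-dimensional transverse integral. Along the transverse directions, $H$ has a nondegenerate minimum at $\bb_k^*$ up to the mass constraint; Taylor expanding, $H(\bb) - h_*(m) = \tfrac12 \langle \nabla^2 H(\bb_k^*)\, v, v\rangle + O(|v|^3)$ where $v$ is the transverse displacement. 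The subtlety is that $\nabla^2 H(\bb_k^*)$ is the \emph{ambient} Hessian and we are constrained to $S(m)$; this is exactly where the shifted operator $\tfrac{14}{11}M(\bb)I + \nabla^2 H(\widehat\bb^*(\bb))$ appearing in $G$ comes from — the coefficient $\tfrac{14}{11} = -2 h_*(1)\cdot\tfrac{2}{?}$-type factor arises as the Lagrange multiplier for the constraint (since $h_*(m) = -\tfrac{7}{22}m^2$, one gets multiplier $\tfrac{\partial}{\partial m}(-\tfrac{7}{22}m^2)\cdot(\text{normalization}) = \tfrac{14}{11}m$, which on $S(m)$ equals $\tfrac{14}{11}M(\bb)$). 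The point is that restricting $\tfrac12\langle \nabla^2 H\, v,v\rangle$ to the tangent space $T_{\bb_k^*}S(m)$ agrees with restricting $G$'s quadratic form there, because the added multiple of $M$ vanishes on tangent vectors modulo the relevant structure. Thus replacing $e^{-\beta H(\bb)}$ by $e^{\beta h_*(m)}e^{-\beta G(\bb)}$ introduces only a multiplicative error $1 + O(\beta^{-1/2})$ after the standard Gaussian rescaling $v \mapsto v/\sqrt\beta$.

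Carrying out that rescaling on each $U_k$: the $4N$ transverse directions split into a $4N-?$ nondegenerate block (where the Hessian is positive definite) and the phase circle already handled; the Gaussian integral over the nondegenerate block in rescaled variables produces the factor $\beta^{-N}$ (since there are $2N$ complex $=$ ... precisely $4N$ real transverse directions but the phase direction is flat, contributing no $\beta$-power beyond... one must count carefully so that the net power matches $\beta^{-N}$ in the statement, consistent with the normalization of $\Gamma_m$ which already carries the $m^{2N+1/2}$ factor). After summing over $k$, the transverse Gaussian integrals each limit to a finite positive constant times $\int_0^{2\pi}\phi(e^{i\theta}\bb_k^*)\,d\theta$, where the constant is absorbed into the normalization of $\Gamma_m$ and $G$ precisely so that it equals $1$ — this is the content of the claimed limit. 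The first displayed convergence in \Cref{thm:LLN} is then just the statement that $e^{\beta h_*(m)}\mu_{\beta,m}^N\phi$ and $\gamma_{\beta,m}^N\phi$ differ by a factor $1+o(1)$, proved by the $U_k$-versus-$V$ split plus the Hessian comparison above; the uniform two-sided bound on $\beta^{-N}\gamma_{\beta,m}^N\phi$ (for, say, $\phi \equiv 1$ and hence in general) follows because the rescaled Gaussian integral converges to a strictly positive finite limit, with explicit bounds coming from the positive-definiteness of $\tfrac{14}{11}mI + \nabla^2 H(\bb_k^*)$ on the transverse space; and the second displayed convergence is then immediate by evaluating that limit.

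The main obstacle I expect is the Hessian bookkeeping: verifying that the transverse restriction of $\tfrac12\langle \nabla^2 H(\bb_k^*) v, v\rangle$ to $T_{\bb_k^*}S(m)$ is positive definite (i.e. the minimizer is a genuine nondegenerate constrained minimum modulo the phase direction), identifying the null directions exactly (the single phase rotation $i\bb_k^*$, which lies in $T_{\bb_k^*}S(m)$ and is killed by $\Proj_{\bb^*}^\perp$), and checking that the coefficient $\tfrac{14}{11}M(\bb)$ in $G$ is the one that makes $G$ restricted to $S(m)$ match $H - h_*(m)$ to second order — this requires computing $\nabla^2 H$ at the explicit 3-mode state from \Cref{mainthm} and diagonalizing, or at least verifying the Lagrange-multiplier identity $\nabla H(\bb_k^*) = -\tfrac{7}{11}m\,\bb_k^* = -\tfrac{14}{11}m\cdot\tfrac12\bb_k^*$ and that $\nabla^2(\tfrac12 M) = I$. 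Once the quadratic model is pinned down, the rest is a routine, if somewhat lengthy, finite-dimensional Laplace asymptotics argument with uniform control over the compact set $B^*(m)$.
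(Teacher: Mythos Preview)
Your proposal is correct and follows essentially the same route as the paper: a partition of $S(m)$ into spherical-cap neighborhoods of the circles $B_k^*(m)$ plus a complement where $H\ge h_*(m)+\delta$, local coordinates adapted to the phase direction, a Taylor expansion of $H$ (which terminates at fourth order since $H$ is quartic), identification of the constrained quadratic form via the Lagrange multiplier, and a standard Laplace/Gaussian rescaling. The Hessian bookkeeping you flag as the main obstacle is exactly what the paper handles by explicit computation in \Cref{thm:Hessprops}, which gives the positive-definiteness on the transverse space (bound \eqref{eqn:GopBound}), the single null direction $i\bb_k^*$, and the negative direction $\bb_k^*$ transverse to $S(m)$.

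One small slip: the gradient identity is $\nabla H(\bb_k^*)=-\tfrac{14}{11}m\,\bb_k^*$ (equation \eqref{eqn:lagmult}), not $-\tfrac{7}{11}m\,\bb_k^*$; the factor $\tfrac{7}{11}$ appears only after pairing with $\tfrac12(\eta+\bar\eta)$. This does not affect your strategy, but getting it right is what makes the shifted operator $\tfrac{14}{11}M(\bb)I+\nabla^2 H$ in $G$ match $H-h_*(m)$ to second order on the constraint.
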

\Cref{thm:LLN} will follow from a more precise version \Cref{thm:Gibbs} stated as \Cref{thm:Gibbs_precise} in Section \ref{Sec:Meas}, combined with some observations that occur in
\Cref{rem:contration}, \Cref{c:GareTheSame}, and \Cref{prop:avgGeneral}.

Since it plays a major role in the analysis below, we record also some properties of the operator, $ \tfrac{14}{11}M(\bb) I
  +\nabla^2 H(\widehat \bb^*\big)$ for a given optimizer $\bb^*\in B^*$.  We will state the result for the real optimizer centered at lattice site $j=0$, denoted $\bb^*_0$, but by phase and translation invariance similar results hold for any $\bb^*$ as described in \Cref{mainthm}.  We will use the notation of $\be_j$ for standard basis vectors such that $\be_j$ has $e_j = 1$ and $e_k = 0$ for all $k \neq 0$.

  \begin{thm}
      \label{thm:Hessprops}
     Let $\bb_0^*$ be the real optimizer of mass $M(\bb^*_0) = m$ from Theorem \ref{mainthm} centered at lattice site $0$.  The operator $\nabla^2 H(\bb^*_0)(j,k)$ 
     only has non-zero entries when $j,k \in \{ -2,-1,0,1,2 \}$.  The spectrum of 
     \begin{equation*}
            \tfrac{14}{11} m I + \nabla^2 H(\bb^*_0)
     \end{equation*}
       is as follows:
      \begin{enumerate}
          \item  There is $1$ negative eigenvalue given by $-m\tfrac{28}{11}$ with corresponding eigenvector $\bb^*_0$ itself;
          \item  There is a $1$ dimensional null-space spanned by $i \bb^*_0$;
          \item The remaining eigenvalues are positive and consist of
       the eigenvalues $m \times \tfrac{14}{11}$ with eigenvectors $\be_j$ for $j \notin \{-2,-1,0,1,2\}$, as well as the eigenvalues
 $m \times \left(\tfrac{2}{11},\tfrac{2}{11} ,\tfrac{26}{11},\tfrac{26}{11}\right)$
 with eigenvectors
 \begin{align*}
   \be_{-2},\be_2 , i \be_{-2}, i \be_{2},
 \end{align*} 
 eigenvalues  $m \times \left(\tfrac{12}{11},\tfrac{40}{11} \right)$ with eigenvectors 
  \begin{align*}
 \be_1-\be_{-1}, i (\be_1-\be_{-1}),
 \end{align*}
 and eigenvalues
 $m \times \left(\tfrac{60}{11},8 \right)$ 
 with the remaining eigenvectors being given by
 \begin{align*}
\sqrt{\tfrac53} (\be_1 + \be_{-1}) - 2 \be_0, i  (\sqrt{\tfrac53} (\be_1 + \be_{-1}) - 2 \be_0) 
 \end{align*}
 respectively.  
\end{enumerate}
  \end{thm}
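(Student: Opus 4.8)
**Proof proposal for Theorem (spectrum of $\tfrac{14}{11}mI + \nabla^2 H(\bb_0^*)$).**

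The plan is to compute the Hessian $\nabla^2 H$ explicitly at $\bb_0^*$ and then diagonalize the (finite-rank perturbation of a multiple of the identity) operator by exploiting the symmetries of the minimizer. First I would record that $H(\bb) = \sum_j \big(\tfrac12|b_j|^4 - 2\Re(\bar b_j^2 b_{j-1}^2)\big)$ depends on $b_j$ only through $b_{j-1},b_j,b_{j+1}$, so $\nabla^2H(\bb_0^*)$ couples site $j$ to site $k$ only when $|j-k|\le 1$ \emph{and} at least one of $j,k$ lies in the support $\{-1,0,1\}$ of $\bb_0^*$; quartic terms vanish off the support, so the nonzero block is confined to $j,k\in\{-2,-1,0,1,2\}$, giving the claimed sparsity. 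Away from that block the operator acts as $\tfrac{14}{11}m\,I$, producing the eigenvalue $m\cdot\tfrac{14}{11}$ on each $\be_j$, $|j|\ge 3$ (the real Hessian of $\tfrac12|b_j|^4$ at $b_j=0$ is $0$, so there $\nabla^2H=0$). This disposes of item (3)'s first family.

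Next I would write the $10$-real-dimensional block on $\mathrm{span}_{\R}\{\be_j, i\be_j : j=-2,\dots,2\}$ in the real inner product $\langle u,v\rangle=\Re(u\cdot \bar v)$. The key simplifications are: (i) the reflection symmetry $j\mapsto -j$ of $\bb_0^*$ splits the block into even and odd sectors; (ii) the phase-rotation invariance $H(e^{i\theta}\bb)=H(\bb)$ (Proposition~\ref{lem:avg}), differentiated twice, gives both $\nabla H(\bb_0^*)=0$ (it is a critical point) and the exact relations $\nabla^2H(\bb_0^*)\,\bb_0^* = -\,(\text{something})\,\bb_0^*$ along the radial direction and $\nabla^2H(\bb_0^*)\,(i\bb_0^*)=0$ along the phase direction. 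Concretely, from $H(\lambda\bb)=\lambda^4 H(\bb)$ one differentiates to get $\langle \nabla^2H(\bb_0^*)\bb_0^*,\bb_0^*\rangle = 12\,H(\bb_0^*)$ and, combined with $H(\bb_0^*)=-\tfrac{7}{22}m^2$ from \eqref{eqn:Hminval}, and the fact that $\bb_0^*$ is an eigenvector (radial homogeneity forces $\nabla^2H(\bb_0^*)\bb_0^*$ proportional to $\bb_0^*$ on the scaling ray once one also knows $\nabla H(\bb_0^*)=0$), one reads off $\nabla^2H(\bb_0^*)\bb_0^* = -m\cdot\tfrac{42}{11}\bb_0^*$, hence $(\tfrac{14}{11}m I+\nabla^2H)\bb_0^* = -m\tfrac{28}{11}\bb_0^*$, giving item (1); and $(i\bb_0^*)$ lies in the kernel of $\nabla^2H$ by phase invariance, giving eigenvalue $\tfrac{14}{11}m$ for $\tfrac{14}{11}mI+\nabla^2H$ — wait, that contradicts item (2), so in fact the phase invariance must be applied to the full shifted operator; I would instead verify directly that $(\tfrac{14}{11}mI+\nabla^2H)(i\bb_0^*)=0$, i.e.\ that $\nabla^2H(\bb_0^*)(i\bb_0^*) = -\tfrac{14}{11}m\,(i\bb_0^*)$, which is exactly the second derivative of $\theta\mapsto H(e^{i\theta}\bb_0^*)\equiv H(\bb_0^*)$ rewritten using $\nabla^2 M = 2I$ and the Lagrange-multiplier identity $\nabla H(\bb_0^*) = \mu\,\nabla M(\bb_0^*)$ at the constrained minimum with multiplier $\mu = -\tfrac{7}{11}m$.

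For the remaining eight dimensions I would just expand $H$ to second order about $\bb_0^*$ along the candidate eigenvectors listed in (3) and check each is an eigenvector with the stated eigenvalue; the reflection and phase symmetries guarantee the block is block-diagonal into the $1$-dimensional real spans of $\be_{\pm2},i\be_{\pm2}$ (which decouple because $\pm2$ couples only to $\pm1$ through the cross term $-2\Re(\bar b_{-1}^2 b_{-2}^2)$ evaluated with $b_{-2}=0$, whose Hessian contributes a diagonal term $-2\cdot|b_{-1}|^2\cdot(\text{phase factor})$ to the $\be_{-2}$ direction) and into the $2$-dimensional even/odd combinations $\be_1\pm\be_{-1}$, $\be_0$; diagonalizing the remaining $2\times2$ and $3\times 3$ real symmetric blocks (the latter splitting by phase into the real part spanned by $\{\be_1+\be_{-1},\be_0\}$ and its $i$-copy) yields $\tfrac{2}{11},\tfrac{26}{11}$, $\tfrac{12}{11},\tfrac{40}{11}$, and $\tfrac{60}{11},8$ after scaling out $m$. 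The main obstacle is purely bookkeeping: correctly assembling the real $10\times10$ Hessian of the quartic $H$ in the $\alpha_j,\beta_j$ coordinates at the specific point $b_{\pm1}^*=\sqrt{3m/11}$, $b_0^*=\sqrt{5m/11}$ (all real), keeping track of the cross terms $-2\Re(\bar b_0^2 b_{-1}^2)$ and $-2\Re(\bar b_1^2 b_0^2)$ and the $\tfrac12|b_j|^4$ terms, and then verifying that the proposed vectors are genuine eigenvectors — I expect the symmetry reductions above to make this a sequence of $\le 3\times3$ eigenvalue computations rather than a single $10\times10$ one, so the difficulty is organizational rather than conceptual.
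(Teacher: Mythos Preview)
Your overall approach is sound and close to the paper's, but there is a slip you should clean up. You assert $\nabla H(\bb_0^*)=0$ because $\bb_0^*$ is ``a critical point''; this is false, since $\bb_0^*$ is only a \emph{constrained} minimizer and in fact $\nabla H(\bb_0^*)=-\tfrac{14}{11}m\,\bb_0^*$ (the Lagrange identity \eqref{eqn:lagmult} you invoke later). If $\nabla H(\bb_0^*)$ were zero, the Euler relation $\nabla^2 H(\bb)\bb = 3\nabla H(\bb)$ (from degree-$4$ homogeneity) would give $\nabla^2 H(\bb_0^*)\bb_0^*=0$, contradicting item~(1). The cleanest way to get (1) and (2) simultaneously is to combine that Euler relation with $\nabla^2 H(\bb)(i\bb)=i\nabla H(\bb)$ (differentiate $\nabla H(e^{i\theta}\bb)=e^{i\theta}\nabla H(\bb)$ at $\theta=0$) and then substitute $\nabla H(\bb_0^*)=-\tfrac{14}{11}m\,\bb_0^*$; this yields $\nabla^2 H(\bb_0^*)\bb_0^*=-\tfrac{42}{11}m\,\bb_0^*$ and $\nabla^2 H(\bb_0^*)(i\bb_0^*)=-\tfrac{14}{11}m\,(i\bb_0^*)$ directly, matching \eqref{eqn:H2bprod}.

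Organizationally the paper does it a little differently: rather than leading with reflection and scaling symmetry, it exploits that $\bb_0^*$ is \emph{real}, so in the $(\alpha_j,\beta_j)$ coordinates the $10\times10$ Hessian block on $|j|\le 2$ splits at once into two $5\times5$ blocks $\H_{\bm\alpha}$ (real directions) and $\H_{\bm\beta}$ (imaginary directions), see \eqref{eqn:Hessform}--\eqref{eqn:HessB}. It then writes these matrices explicitly and reads off the eigenpairs of $\tfrac{14}{11}I+\H_{\bm\alpha}$ and $\tfrac{14}{11}I+\H_{\bm\beta}$ separately (further reflection symmetry reduces each to a $1+1+3$ problem). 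Your route --- symmetry identities first, then check each listed vector --- is equally valid and makes the role of $\bb_0^*$ and $i\bb_0^*$ more conceptual; the paper's route is more mechanical but leaves less room for the kind of sign and multiplier confusion you ran into.
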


  We thus have that for any $\bxi$ such that $\langle \bxi, \bb^*_0\rangle = \langle \bxi, i \bb^*_0\rangle$, that 
\begin{equation}
\label{eqn:GopBound}
    \langle (\tfrac{14}{11} m I + \nabla^2 H(\bb^*_0) )\bxi, \bxi \rangle \geq \frac{2m}{11} | \bxi|^2 .
    \end{equation}
Also, one observes that $\nabla^2 H (\bb^*_0) \bb^*_0=
-m\tfrac{42}{11}\bb^*_0$. The proof of \Cref{thm:Hessprops} will be
given by direct construction in \Cref{sec:Hessian}.

\begin{rem}
  The approximation gives some interesting insight into the structure of samples from $\mu_{\beta,m}$ as $\beta \rightarrow \infty$. Such as sample looks primarily like one of the three-mode minimizers described in \Cref{mainthm}, whose phase and location of its maximum are chosen uniformly at random over the set of possibilities. The fluctuations around this maximum are Gaussian with a correlated structure over the five-modes centered on the maximum and uncorrelated white noise away from these center five-modes.
\end{rem}

\subsection{Connections to Previous Work and outline of the result}

The invariance and characterization of a Gibbs measure has been
studied in the setting of dispersive PDEs such as the KdV equation, the
nonlinear Schrödinger equation and others since the work of
\cite{lebowitz1988statistical}, see for instance the developments
\cite{bourgain1996invariant,bourgain1997invariant,bringmann2024gibbs,bringmann2024invariant,oh2018pedestrian,tolomeo2023phase,thomann2010gibbs,
  burq2018remarks} only to name a few. The computations in our problem
are simplified by the fact that it is from the start discrete with
dynamics covered by a simple ODE system with the optimizers of the
Hamiltonian localized to a finite box.  This allows us to easily see
invariance of the Gibbs measure of our dynamics, as well as to work on
measures over finite boxes to simplify and highlight the concentration
of measure phenomenon in the neighborhood of an energy minimizer.

For other works related to frequency cascades and the study of weak
turbulence for NLS, we refer the reader to
\cite{bourgain1995cauchy,bourgain1995aspects,bourgain1996growth,B04}
as well as the interesting and recent works \cite{Carles:2012jv,
  Hani2,GK12, Ku1,Soh1,GT1, grebert2012beating, guardia2012growth,
  haus2012dynamics, faou2013weakly,hani2013modified}.  See also the
work \cite{herr2016discrete} for studies of dynamics and fluxes in
\eqref{e:toy_model}.

\subsection{Organization of Paper}
In Section
\ref{Sec:Stationary}, we recall the family of stationary solutions to
\eqref{e:toy_model} as found in \cite{CMOS1}, including a $3$-mode
solution we explicitly describe that turns out to be the optimizer described in \Cref{mainthm}.  In Section \ref{s:enmin}, we
introduce a novel approach to discrete rearrangement arguments and
prove that this compact three mode solution is the energy minimizer
for fixed mass up to symmetries of the equation, establishing \Cref{mainthm}.  Then, in Section
\ref{Sec:JacHess}, we look at linearizing about the $3$-mode solution,
computing the gradient and Hessian of the energy $H$ at the $3$-mode
solution. In \Cref{Sec:Gaussian}, we describe collection ``Gaussian''
measure that are equivalent as $\beta \rightarrow \infty$. Using the
information on the Hessian $\nabla^2H$ from \Cref{Sec:JacHess} we
explore their structure. This allows us to describe the measure in a neighborhood of
minimizer with extreme precision in Section \ref{Sec:Meas}.  Finally,
we discuss Corollaries of our findings and future directions in Section
\ref{discussion}.

\section*{Acknowledgements}
The majority of this article was written in collaboration with Dana Mendelson.  It would not have come to fruition without her many insights and ideas.  J.L.M.\ acknowledges support from the NSF through grant DMS-2307384
and FRG grant DMS-2152289.  J.C.M. acknowledges support of the NSF
through the grants DMS-1613337 and RTG-DMS-2038056. All of the authors
thank the hospitality of the NSF Mathematical Sciences Research
Institute (MSRI, now SLMath) for its support and hospitality during the 2015
program semester long ``New Challenges in PDE: Deterministic Dynamics
and Randomness in High and Infinite Dimensional Systems'' of which all
the authors were members and during which this work was started.

\section{Exact in phase solutions}
\label{Sec:Stationary}

In this section, in order to motivate the three-mode optimizers described in Theorem \ref{mainthm}, we divert ourselves briefly to consider a notion of stationary solution for the dynamical model \eqref{e:toy_model}.  Given the structure of the equation, it is natural to consider exact solutions of fixed mass to \eqref{e:toy_model} that will be of the form 
\begin{equation}
\label{eqn:statansatz}
    \bb = e^{i \omega t} \bb_0,
\end{equation}
where $\bb_0$ is independent of time.  While these solutions do vary in time, their mass and Hamiltonian energy do not of course.  Such states were first introduced in \cite[Section 4]{CMOS1}.  In many similar equations, these stationary solutions serve as reasonable candidates for energy minimizers with fixed mass as they arise naturally from the corresponding Euler-Lagrange equations associated to $H$ and $M$. See the recent article \cite{parker2024standing} for an in depth study of related questions, as well as \cite{germain2017compactons} for a treatment of related solutions in a continuum version of our problem.  

Note, since we will prove that stationary solutions of this type are indeed energy minimizers for fixed mass, the fact that we have rotation invariance in our measure in \Cref{thm:Gibbs} is essential.  Indeed, our description of the measure gives a sense of how solutions that start near this orbit must stay near this orbit over all time, which is related to orbital stability of the model.

In what turns out to greatly simplify our particular framework, we begin by performing the Madelung transformation of \eqref{e:toy_model} by setting
\begin{equation}
  b_j(t) = \sqrt{\rho_j(t)}\exp(i\phi_j(t))
\end{equation}
with $\rho_j \geq 0$ and $\phi_j \in \mathbb{R}$ to obtain the evolution
equations:
\begin{equation}\label{e:toy_model_hydro}
  \begin{aligned}
    \dot\phi_j & = -\rho_j + 2 \rho_{j-1} \cos\bracket{2(\phi_{j-1}-\phi_j)} + 2 \rho_{j+1} \cos\bracket{2(\phi_{j+1}-\phi_j)} , \\
    \dot\rho_j & = -4 \rho_j \rho_{j-1}
    \sin\bracket{2(\phi_{j-1}-\phi_j)} -4 \rho_j \rho_{j+1}
    \sin\bracket{2(\phi_{j+1}-\phi_j)}.
  \end{aligned}
\end{equation}
This formulation makes it clear that phase interactions play a key
role in the dynamics. One class of interesting solutions stay in phase for all time, maintaining 
\begin{equation}
  \label{e:inphase}
  \phi_j(t) = \phi_{j+1}(t).
\end{equation}
Such solutions are said to be {\it phase locked}.  Assuming this
holds, \eqref{e:toy_model_hydro} becomes
\begin{subequations}
  \begin{align}
    \dot\phi_j & = -\rho_j + 2 \rho_{j-1} + 2 \rho_{j+1} , \\
    \dot\rho_j & = 0.
  \end{align}
\end{subequations} 
Hence, we observe that the Madelung transformation has changed our nonlinear stationary state problem into a constrained linear one since we require $\rho_j \geq 0$ for all $j$.  

Given our ansatz \eqref{eqn:statansatz}, we then require that  
\begin{equation}\label{eq:discreteLap}
  -\rho_j + 2 \rho_{j-1} + 2 \rho_{j+1}=\omega \in \R, \quad\text{for
    $j=1,\ldots N$},
\end{equation}
where $\omega$ is independent of $t$ and $\rho_0 = \rho_{N+1} = 0$ then the resulting solution will be phase locked 
with time independent amplitudes $\rho_j$. The phases would evolved
according to $\phi_j(t) = \phi_j(0) + \omega t$. One can also understand
the group rotational velocity of $\omega$ as the Lagrange Multiplier associated to the constraint that $M$ is fixed.  

The entire dynamics of \eqref{e:toy_model} then simply rigidly rotates with a group rotational velocity of $\omega$. 
The condition in \eqref{eq:discreteLap} corresponds to the linear system
\begin{equation}
  \label{e:upmat}
  \begin{pmatrix}  
    -1 & 2 & 0 &\cdots & 0 \\
    2 & -1 & 2 & \cdots &0 \\
    \vdots & \vdots & \ddots &\ddots & \vdots\\
    0 & 0 & \cdots & 2 & -1
  \end{pmatrix} \boldsymbol{\rho} = \omega \boldsymbol{1}.
\end{equation}
Though the matrix is tri-diagonal, it is not diagonally dominant, so
its solvability is not immediately clear.  However,
\begin{thm}[Thm 4.1 and Cor. 4.1 in \cite{CMOS1}]
  The matrix in \eqref{e:upmat} has no kernel for any $N$.    As a result, any nontrivial phase locked solution has $\omega \neq 0$.
\end{thm}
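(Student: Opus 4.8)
The claim reduces to showing that the $N\times N$ tridiagonal matrix $A_N$ of \eqref{e:upmat}, with $-1$ on the diagonal and $2$ on the two adjacent diagonals, is invertible for every $N\ge 1$. Granting this, the corollary about stationary solutions is automatic: by \eqref{eq:discreteLap} a phase-locked solution satisfies $A_N\boldsymbol{\rho}=\omega\boldsymbol{1}$, so if $\omega=0$ then $\boldsymbol{\rho}\in\ker A_N=\{0\}$ and hence $\bb\equiv 0$; contrapositively, any nontrivial phase-locked solution has $\omega\neq 0$.

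The plan is to compute the determinant $D_N\eqdef\det A_N$ through a two-term linear recursion and to show it is odd, hence nonzero, for every $N$. Expanding $\det A_N$ along its last column — which has only the entries $(A_N)_{N,N}=-1$ and $(A_N)_{N-1,N}=2$ — and then expanding the resulting $(N-1,N)$-minor along its bottom row, which carries a single nonzero entry $2$ in its last column, gives
\begin{equation*}
  D_N=-D_{N-1}-4D_{N-2},\qquad D_0=1,\quad D_1=-1,
\end{equation*}
with the quick checks $D_2=-3$ and $D_3=7$. This cofactor bookkeeping is the one genuinely computational step and the only place a sign error could enter.

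With the recursion in hand the conclusion is immediate. Reducing modulo $2$ and using $-4\equiv 0$, $-1\equiv 1$, the recursion collapses to $D_N\equiv D_{N-1}\pmod 2$; since $D_1=-1$ is odd, $D_N$ is odd for every $N\ge 1$, so $D_N\neq 0$ and $A_N$ is invertible. I would also record, as a remark, the spectral picture: $A_N=2J_N-I$, where $J_N$ is the adjacency matrix of the path on $N$ vertices, so the eigenvalues of $A_N$ are $4\cos\tfrac{k\pi}{N+1}-1$ for $k=1,\dots,N$; singularity would force $\cos\tfrac{k\pi}{N+1}=\tfrac14$, which is impossible by Niven's theorem, since the only rational cosines of rational multiples of $\pi$ are $0,\pm\tfrac12,\pm 1$. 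For $N\ge 2$ this viewpoint moreover yields that $\omega$ is strictly positive, not merely nonzero: $A_N$ is symmetric and $A_N\boldsymbol{1}=(1,3,\dots,3,1)^T$ has positive entries, so $N\omega=\langle\boldsymbol{1},A_N\boldsymbol{\rho}\rangle=\langle A_N\boldsymbol{1},\boldsymbol{\rho}\rangle>0$ whenever $\boldsymbol{\rho}\ge 0$ is nontrivial.

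There is no serious obstacle once the determinant recursion is written down — the whole point is the structural observation that its second coefficient is even, which trivializes the parity reduction. The only care needed is in the cofactor expansion that produces the recursion (getting the constant $-4$ and the two base cases right), and, for the alternative route, in invoking the path-graph spectrum and Niven's theorem accurately.
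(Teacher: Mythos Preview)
Your argument is correct. The determinant recursion $D_N=-D_{N-1}-4D_{N-2}$ with $D_0=1$, $D_1=-1$ is the standard tridiagonal cofactor expansion, and the parity reduction is clean; the spectral alternative via $A_N=2J_N-I$ and Niven's theorem is also valid. One cosmetic point: for $N=2$ the vector $A_N\boldsymbol{1}$ is $(1,1)^T$ rather than $(1,3,\dots,3,1)^T$, but your positivity conclusion stands for all $N\ge 2$.

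As for comparison: the paper does not supply its own proof of this statement at all --- it is quoted as Theorem~4.1 and Corollary~4.1 of \cite{CMOS1} and used as a black box. So your write-up is a self-contained replacement for a citation, not an alternative to an argument given here. The recursion-plus-parity route you take is the most elementary possible and would be a welcome inline proof; the Niven-theorem remark is a nice structural explanation but is strictly heavier machinery than the problem requires.
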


For a given system size $N$, the linear system can always be solved, and a phase
matched solution of \eqref{e:toy_model_hydro} exists.\footnote{
   This result first appeared in \cite{CMOS1}, where we point out that we can in fact find a solution for any real valued function $\omega(t)$.}  However, this
will not always yield a solution of \eqref{e:toy_model}.  As Figure
\ref{f:compact_solns1}$(d)$ shows, at $N=5$, the solution is not strictly
positive, and the Madelung transformation cannot be inverted.  Despite
the obstacle at $N=5$, we can again obtain a strictly positive
solution at $N=8$ and higher, as Figure \ref{f:compact_solns2} shows.
In \cite{CMOS1}, it is pointed out that a positive solution exists for
a special sequence of $N_k \to \infty$ as $k \to \infty$ using some
recursive linear algebra techniques.  However, while this family of solutions was discovered there, how these solutions compared in terms of the energy \eqref{e:hamiltonian} was not explored.

\begin{figure}
  \subfigure[$N=2$]{\includegraphics[width=1.0in]{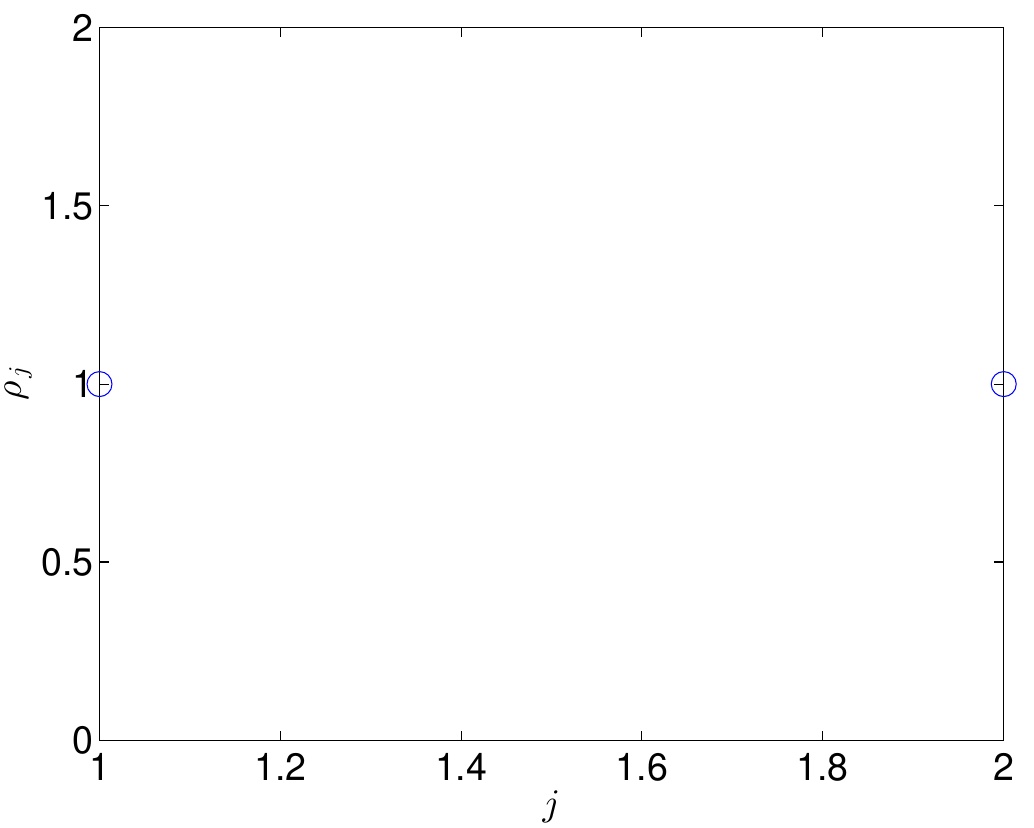}}
  \subfigure[$N=3$]{\includegraphics[width=1.0in]{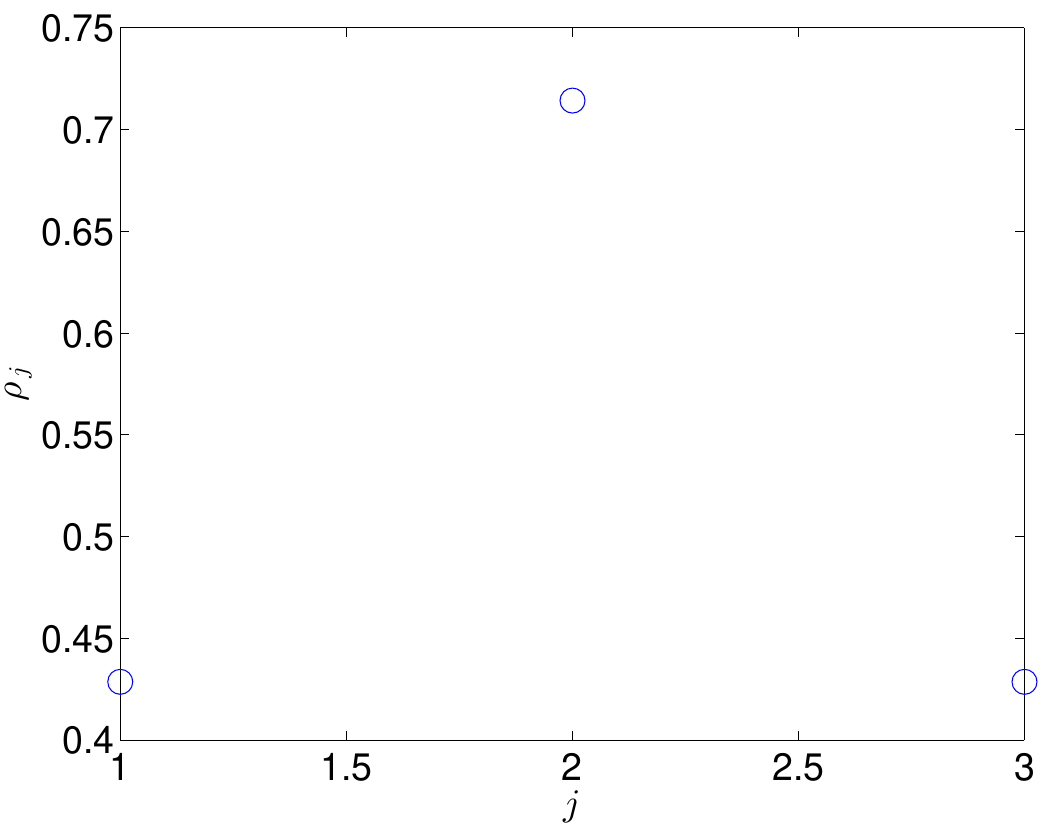}}
  \subfigure[$N=4$]{\includegraphics[width=1.0in]{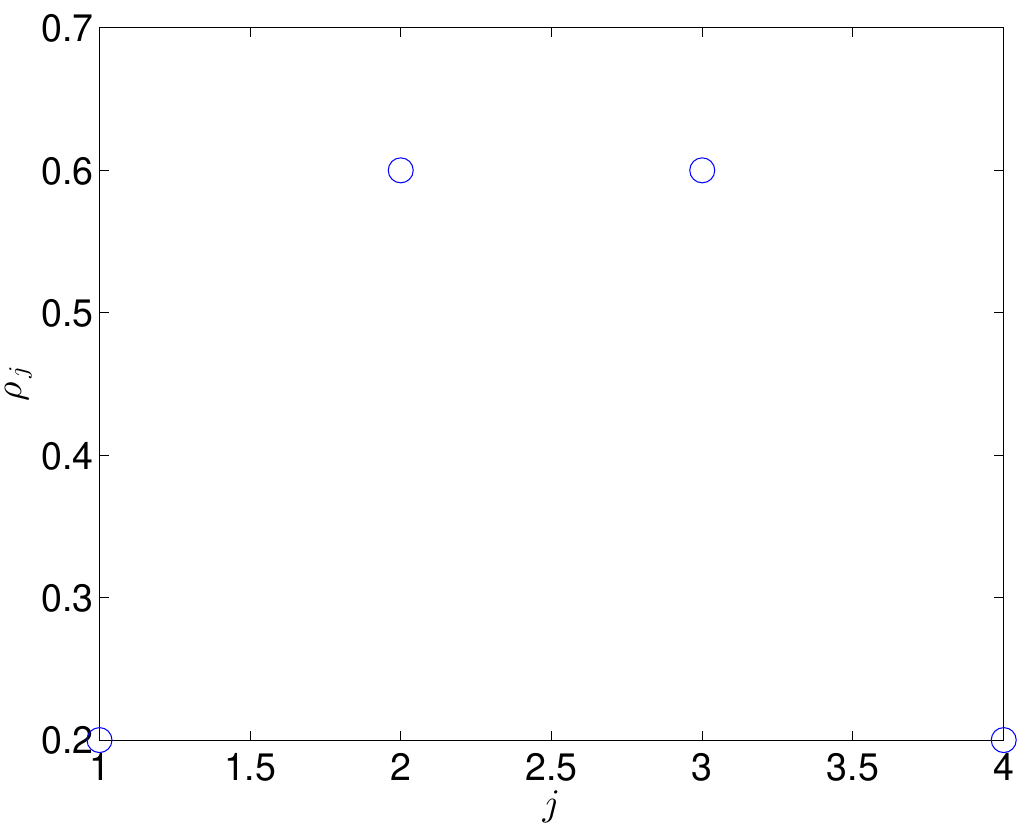}}
  \subfigure[$N=5$]{\includegraphics[width=1.0in]{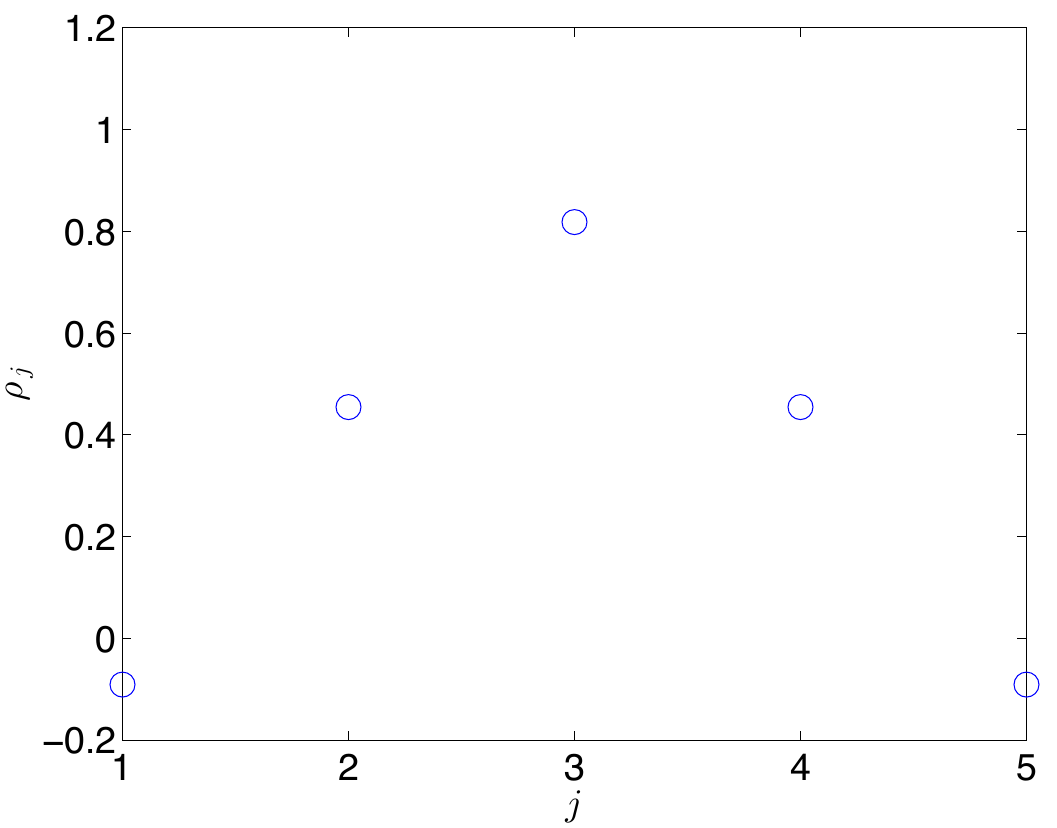}}
  \caption{Solutions of \eqref{e:upmat} with $\omega =1$.  The
    solution at $N=5$ is not strictly positive.}
  \label{f:compact_solns1}
\end{figure}

\begin{figure}
  \subfigure[$N=8$]{\includegraphics[width=1.4in]{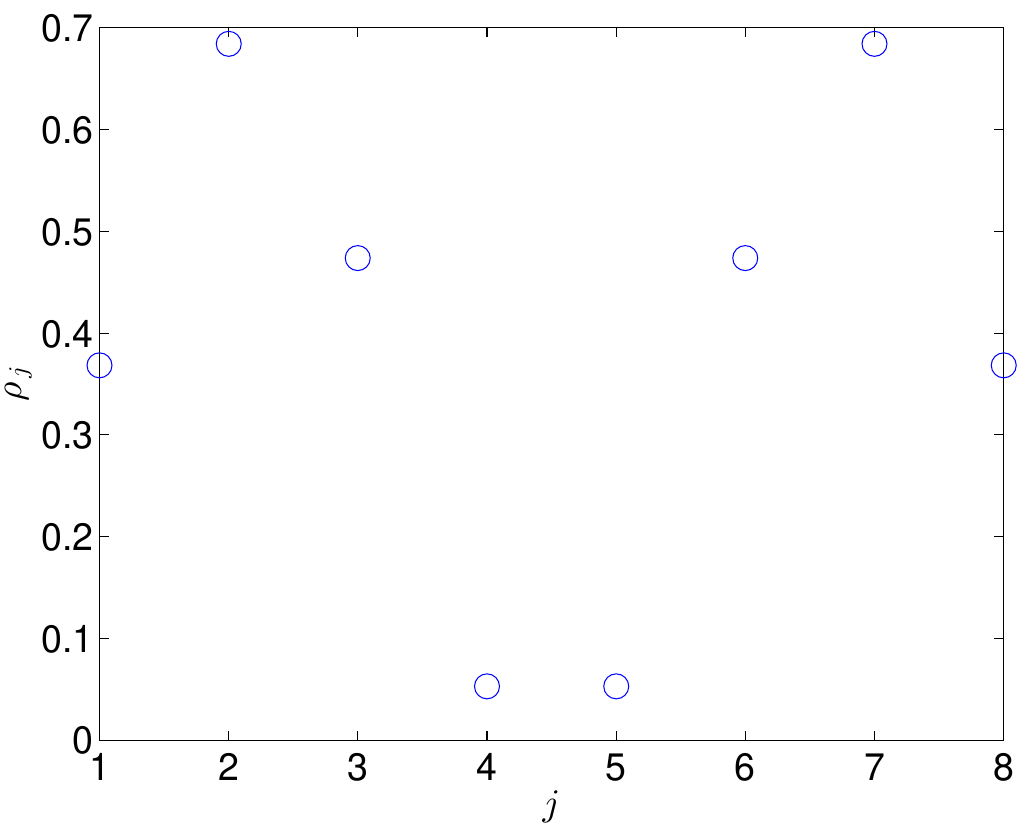}}
  \subfigure[$N=142$]{\includegraphics[width=1.4in]{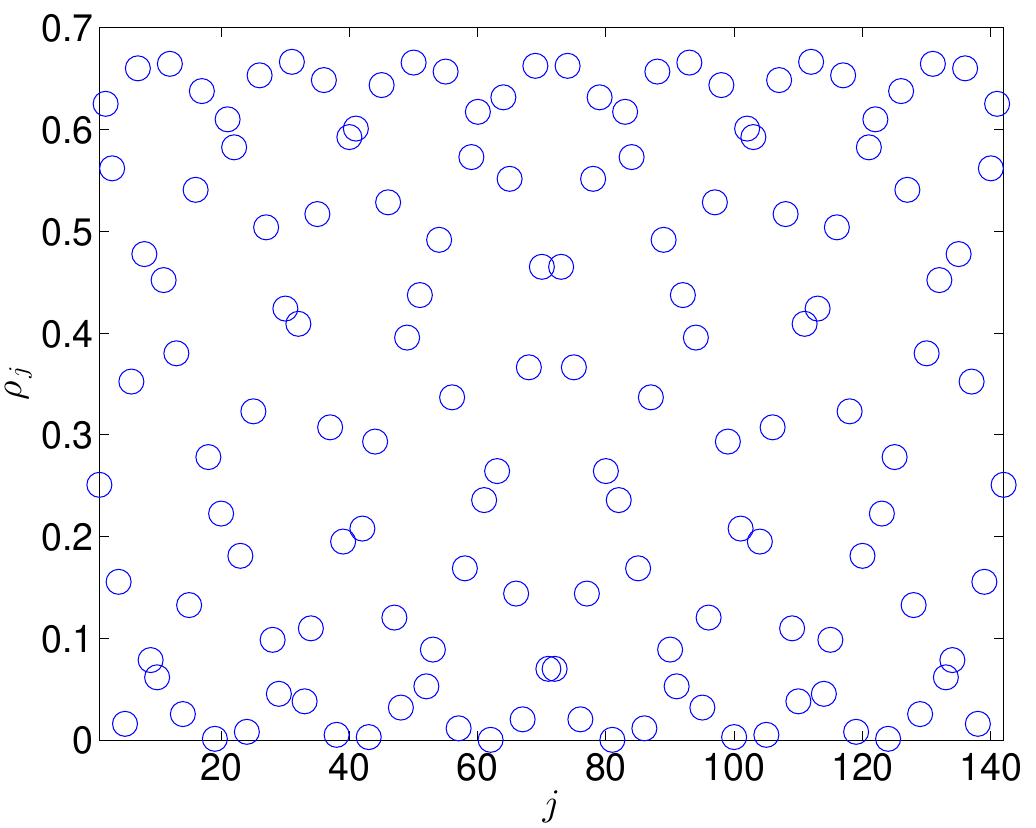}}

  \caption{Solutions of \eqref{e:upmat} with $\omega =1$. Both
    solutions are strictly positive.}
  \label{f:compact_solns2}
\end{figure}

Note that since the solutions we have constructed are all supported on a compact number of nodes and the equations for the toy model, the hydrodynamic formulation
and \eqref{e:upmat} are autonomous in $j$, one can easily concatenate these
localized solutions together to form new solutions. 

\subsection{The 3 mode solution}\label{three_mode}
\label{Sec:minimizer}

We will prove Theorem \ref{mainthm} in Section \ref{s:enmin} below that the 3 mode in-phase
solution described above is actually the global minimizer.

We begin this exploration by better understanding the relationship
between the phase $\omega$ and the mass $m$ in a solution concentrated
on the three modes $\{-1,0,1\}$.  We thus seek a solution of the form
\begin{align*}
& \rho_{-1} + \rho_0 + \rho_1= m, \\
 -&\rho_{-1} + 2 \rho_0= \omega, \\
 -&\rho_{0} +2 \rho_{-1} + 2 \rho_1 = \omega, \\
 -&\rho_{1} + 2 \rho_0 = \omega, 
\end{align*}
which can be further simplified by using the symmetry
\begin{equation}
\rho_{-1} = \rho_1 .
\end{equation}
The solution of the resulting system of equations is given by
\begin{equation}
\label{enmin}
\rho_{-1} = \rho_1 = \tfrac{3 m}{11}, \ \ \rho_0 = \tfrac{5 m}{11}, \ \ \omega = \tfrac{7 m}{11}.
\end{equation}
We can linearize about this simple three mode solutions, study flux
dynamics in given topologies, and ask about nearby dynamics.  Linear
stability of such solutions was considered in
\cite{parker2024standing}.

\section{Energy Minimizers for Fixed Mass}
\label{s:enmin}

We will demonstrate that the three mode solution from \eqref{enmin} is
in fact the energy minimizer for fixed mass on the entire lattice
$\mathbb{Z}$.  As a result, the three mode solution from \eqref{enmin}
will be the energy minimizers on any finite lattice for any
$N \geq 3$.

To demonstrate this fact, we begin by comparing the  symmetric, thee-mode solution found given
in \eqref{enmin} to the other symmetric, energy minimizing  solutions
with mass $m$ with one, two and four nodes. First observe that the 
energy of the symmetric, thee-mode solution is 
\begin{equation}
    \label{eqn:Hminval}
    H(\rho) = -\tfrac{7}{22} m^2. 
\end{equation}
One should compare this to the $1$-mode solution, $\rhob = (0,m,0)$
with energy $\frac12 m^2$, the $2$-mode solution, $\rhob = (0,
m/2,m/2,0)$, with energy $-\frac14 m^2$, and the $4$-mode solution,
$\rhob = (0,m/8,3m/8,3m/8,m/8,0)$ with energy $-\frac{5}{16} m^2$.
Since $-\tfrac{7}{22} <- \tfrac{5}{16} < -\tfrac14 <  \tfrac12 $, the symmetric 
$3$-mode solution  is the minimizer among this collection special
solutions.  While this is just an indication that our claim is
correct, it will be useful in the more general proof which we now turn
to.

The proof that the three mode solution from \eqref{enmin}  is the
global minimizer will require a number of preliminary results and
calculations. 

Using the $(\rhob,\bm{\theta})$ variables introduced in
the previous section, we observe that the Hamiltonian can be re-written as
\begin{equation}
  \label{e:hamiltonian_rhotheta}
  H(\rhob, \bm{\theta}) = \sum_{j=-\infty}^\infty \Big( \frac12 \rho_j^2 - 2  \rho_j \rho_{j-1} \cos\big( 2 (\theta_j - \theta_{j-1})\big) \Big)\,
\end{equation}
using $\rho_j = |b_j|^2$ and $b_j=\sqrt{\rho_j} e^{i\theta_j}$.

\begin{proposition}
  \label{prop:inPhaseMin}
  The minimizers of $H(\mathbf{\rho}, \mathbf{\theta})$ must be an {\it in phase} solution, that is a solution where $\theta_j = \theta_{j-1}$. In particular, $\min_{(\mathbf{\rho},\mathbf{\theta})} H(\mathbf{\rho},\mathbf{\theta})  =  \min_{\mathbf{\rho}} H(\mathbf{\rho})$ where
 \begin{align}
\label{altHinphase}
H( \rhob) = \sum_{j=-\infty}^\infty  \tfrac12 \rho_j^2 - 2 \rho_j \rho_{j-1}\,.
\end{align}
\end{proposition}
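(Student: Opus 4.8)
The plan is to exploit the fact that in \eqref{e:hamiltonian_rhotheta} the phases $\bm\theta$ enter only through the cosine terms $-2\rho_j\rho_{j-1}\cos(2(\theta_j-\theta_{j-1}))$, which are minimized term-by-term when each $\cos(2(\theta_j-\theta_{j-1}))$ equals $1$. Concretely, for \emph{fixed} amplitudes $\rhob$ with $\rho_j\ge 0$, the map $\bm\theta \mapsto H(\rhob,\bm\theta)$ satisfies
\[
H(\rhob,\bm\theta)=\sum_j\Big(\tfrac12\rho_j^2-2\rho_j\rho_{j-1}\cos(2(\theta_j-\theta_{j-1}))\Big)\ge \sum_j\Big(\tfrac12\rho_j^2-2\rho_j\rho_{j-1}\Big)=H(\rhob),
\]
since $\rho_j\rho_{j-1}\ge 0$ and $\cos\le 1$. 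Because the model has finite support (we work with $N<\infty$, or equivalently compactly supported data), this is a finite sum and there are no convergence subtleties. This already gives $\min_{(\rhob,\bm\theta)}H(\rhob,\bm\theta)\le \min_{\rhob}H(\rhob)$ by restricting to in-phase configurations, and the displayed inequality gives the reverse, so the two minima coincide.

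The remaining point — that \emph{every} minimizer must be in phase, i.e.\ $\theta_j=\theta_{j-1}$ on the support — requires a touch more care because the inequality above is not strict when some $\rho_j\rho_{j-1}=0$. So first I would argue that a minimizer $(\rhob^\star,\bm\theta^\star)$ must have connected support: if the support of $\rhob^\star$ were disconnected, one could translate one of the pieces to sit adjacent to another, strictly decreasing $H$ by creating a new negative coupling term $-2\rho_j\rho_{j-1}<0$ while leaving $M$ and all other terms unchanged — contradicting minimality. (Here I use the autonomy of the equations in $j$, noted in the paragraph before \Cref{three_mode}, so that translating/concatenating supported blocks is legitimate.) Actually for the statement as written it suffices to note that the claim ``$\theta_j=\theta_{j-1}$'' is vacuous across a gap where $\rho_j=0$; the substantive claim is that on each maximal run of consecutive sites where $\rho_j>0$ the phases are constant, and there the coefficient $\rho_j\rho_{j-1}>0$ makes $\cos(2(\theta_j-\theta_{j-1}))=1$ forced, hence $\theta_j\equiv\theta_{j-1}\pmod\pi$; the residual $\pi$-ambiguity in $\theta_j$ is harmless since $b_j$ and $-b_j$ give the same $\rho_j$, and one can always select representatives with all $\theta_j$ equal. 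I would phrase the proposition's conclusion with that understanding.

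The main obstacle is purely bookkeeping: making precise the phrase ``must be an in-phase solution'' in the presence of zero amplitudes and the $\theta\mapsto\theta+\pi$ redundancy, and ensuring the reduction $\min_{(\rhob,\bm\theta)}H=\min_\rhob H$ over the constraint set $M(\bb)=m$ is stated for a regime where the minimum is actually attained (which it is, by compactness of $S(m)$ for $N<\infty$, combined with \Cref{prop:H_bound} giving the lower bound). Once those are nailed down, the core estimate is the one-line termwise bound above. I do not expect any analytic difficulty; the content is entirely the observation that aligning phases can only lower the energy, and that on the support it \emph{strictly} lowers it unless the phases are already aligned.
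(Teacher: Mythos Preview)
Your proposal is correct and uses exactly the same mechanism as the paper: the termwise bound $-2\rho_j\rho_{j-1}\cos(2(\theta_j-\theta_{j-1}))\ge -2\rho_j\rho_{j-1}$ coming from $\rho_j\ge 0$ and $\cos\le 1$. The paper's own proof is a two-line version of your first displayed inequality and does not address the $\pi$-ambiguity or the zero-amplitude subtleties you raise; your extra care on those points is a refinement rather than a different approach, and the connected-support digression, while correct, is not needed for the proposition as stated.
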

\begin{proof}
The first conclusion follows from the observation that $\cos( 2 (\theta_j - \theta_{j-1}))$ ranges over $[-1,1]$ and is maximized when $\theta_j = \theta_{j-1}$. Since the $\rho_j \geq0$, this corresponds to minimum of $H(\mathbf{\rho},\mathbf{\theta})$ when $\mathbf{\rho}$ is fixed and $\mathbf{\theta}$ is left to vary. The second observation follows from the fact that $\frac12 \rho_j^2 - 2  \rho_j \rho_{j-1} \cos( 2 (\theta_j - \theta_{j-1})) = \frac12 \rho_j^2 - 2  \rho_j \rho_{j-1}.
$ when $\theta_j = \theta_{j-1}$.  
\end{proof}

The next proposition implies that we need only consider $\mathbf{\rho}$ with $m=\sum \hat\rho_j = 1$ when exploring the structure of the minimizing $\mathbf{\rho}$.
\begin{proposition}
Let $\mathbf{\rho}$ be a minimizer with mass $1$, that is
\begin{equation*}
  H(\rhob) = \min_{\bm{\hat \rho}} \{ H(\bm{\hat \rho}) | \sum \hat\rho_j = 1 \}.
\end{equation*}

Then $m \rhob$ is the minimizer with mass $m$ for any $m \geq 0$.
\end{proposition}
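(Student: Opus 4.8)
The plan is to exploit the exact homogeneity of the reduced Hamiltonian \eqref{altHinphase} under scaling of the amplitude vector $\rhob$. Observe that in the in-phase form $H(\rhob)=\sum_j \big(\tfrac12\rho_j^2 - 2\rho_j\rho_{j-1}\big)$, every term is quadratic in the entries $\rho_j$, so $H(m\rhob)=m^2 H(\rhob)$ for any scalar $m\ge 0$; likewise the mass is linear, $M(m\rhob)=\sum_j m\rho_j = m\sum_j\rho_j$. (For $m>0$ this also respects the sign constraint $\rho_j\ge 0$, since $m\rho_j\ge 0$ whenever $\rho_j\ge 0$, so $m\rhob$ is again an admissible amplitude profile.) I would record these two scaling identities first, as a one-line lemma, referring back to \eqref{altHinphase} and \eqref{e:truncmass}.

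The main step is then a change-of-variables bijection between the mass-$1$ constraint set and the mass-$m$ constraint set. Fix $m>0$. The map $\rhob\mapsto m\rhob$ sends $\{\bm{\hat\rho}\ge 0 : \sum_j\hat\rho_j = 1\}$ bijectively onto $\{\bm{\hat\rho}\ge 0 : \sum_j\hat\rho_j = m\}$, with inverse $\bm{\hat\rho}\mapsto \tfrac1m\bm{\hat\rho}$. Under this bijection the objective transforms by the fixed positive factor $m^2$: $H(m\bm{\hat\rho}) = m^2 H(\bm{\hat\rho})$. Therefore
\begin{align*}
  \min\big\{ H(\bm{\hat\rho}) : \hat\rho_j\ge 0,\ \textstyle\sum_j\hat\rho_j = m \big\}
  &= \min\big\{ H(m\bm{\hat\rho}) : \hat\rho_j\ge 0,\ \textstyle\sum_j\hat\rho_j = 1 \big\} \\
  &= m^2 \min\big\{ H(\bm{\hat\rho}) : \hat\rho_j\ge 0,\ \textstyle\sum_j\hat\rho_j = 1 \big\},
\end{align*}
and a profile attains the minimum on the right precisely when its image attains the minimum on the left. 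Since $\rhob$ is by hypothesis a minimizer at mass $1$, it follows that $m\rhob$ is a minimizer at mass $m$. The case $m=0$ is trivial since the only admissible profile is $\rhob\equiv 0$, with $H=0$.

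I do not expect any genuine obstacle here: the statement is essentially a bookkeeping consequence of the quadratic homogeneity of $H$ and the linear homogeneity of $M$, combined with the fact that scaling by a positive constant preserves nonnegativity. The only point requiring a word of care is that \Cref{prop:inPhaseMin} has already reduced the full minimization over $(\rhob,\bm\theta)$ to the minimization of the reduced functional \eqref{altHinphase} over nonnegative $\rhob$, so it suffices to argue with \eqref{altHinphase}; and that one should phrase the homogeneity for $m>0$ (with $m=0$ handled separately) so that the $\rho_j\ge 0$ constraint transfers cleanly in both directions. With those remarks the proof is complete.
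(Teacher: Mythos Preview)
Your proof is correct and follows essentially the same approach as the paper: both arguments rest on the scaling identities $\sum_j m\rho_j = m\sum_j\rho_j$ and $H(m\rhob)=m^2 H(\rhob)$, and then transfer the minimizer via the bijection between the mass-$1$ and mass-$m$ constraint sets. Your version is slightly more detailed (explicitly noting the nonnegativity constraint and the trivial $m=0$ case), but there is no substantive difference.
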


\begin{proof}
  The proof follows from the fact that $\sum m\rho_j= m \sum \rho_j$
  and $H(m\rhob)= m^2 H(\rhob)$. This implies that $\min_{\bm{\hat \rho}} \{ H(\bm{\hat \rho}) | \sum \hat\rho_j = m \} =  \min_{\mathbf{\hat \rho}} \{ H(m\bm{\hat \rho}) | \sum \hat\rho_j = 1 \}=m^2 \min_{\bm{\hat \rho}} \{ H(\bm{\hat \rho}) | \sum \hat\rho_j = 1 \}$.
\end{proof}

Our next goal is to show that the three mode solution, discussed in Section \ref{three_mode}, is the unique minimizer for $H(\mathbf{\rho})$ in \eqref{altHinphase}. We will first demonstrate that a minimizer is non-increasing from its maximum value.

\begin{lem}\label{lem:dec}
Any minimizer of the energy $H$  with finite is always non-increasing about its maximum value.
\end{lem}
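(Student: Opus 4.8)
The plan is to argue by contradiction via a rearrangement/exchange argument, using the explicit form of the in-phase Hamiltonian $H(\rhob) = \sum_j \big(\tfrac12 \rho_j^2 - 2\rho_j\rho_{j-1}\big)$ from \eqref{altHinphase}. Let $\rhob$ be a finite-mass minimizer, and suppose for contradiction that $\rhob$ is not non-increasing as one moves away from a site where it attains its maximum. First I would make precise what ``non-increasing about its maximum'' means: there is a site $j_0$ with $\rho_{j_0} = \max_j \rho_j$ such that $\rho_j \geq \rho_{j+1}$ for $j \geq j_0$ and $\rho_j \geq \rho_{j-1}$ for $j \leq j_0$. (Since the mass is finite, the supremum is attained, so $j_0$ exists.) Negating this, there must be two adjacent sites $j, j+1$ on one side of $j_0$ — say the right side — with $\rho_j < \rho_{j+1}$, i.e., the profile goes strictly uphill somewhere as we move right past the max.

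The key step is a local swap: exchange the values $\rho_j \leftrightarrow \rho_{j+1}$ at that ``uphill'' bond (or more robustly, do a monotone rearrangement of the right tail and, separately, the left tail) and show $H$ strictly decreases, contradicting minimality. The point is that the only terms in $H$ that change under swapping $\rho_j$ and $\rho_{j+1}$ are the nearest-neighbor products touching sites $j-1, j, j+1, j+2$; the diagonal terms $\tfrac12\rho_k^2$ are unaffected since we only permute values. Writing $a = \rho_{j-1}$, $x = \rho_j$, $y = \rho_{j+1}$, $c = \rho_{j+2}$ with $x < y$, the relevant part of $H$ is $-2(ax + xy + yc)$ before and $-2(ay + yx + xc)$ after, so the change is $\Delta H = -2\big((ay + xc) - (ax + yc)\big) = -2(y-x)(a-c)$. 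Thus the swap strictly decreases $H$ precisely when $a > c$, i.e., $\rho_{j-1} > \rho_{j+2}$. I would choose $j$ to be the location of the \emph{first} uphill bond encountered moving right from $j_0$: then $\rho_{j_0} \geq \cdots \geq \rho_{j-1} \geq \rho_j$ and $\rho_j < \rho_{j+1}$, and I need $\rho_{j-1} > \rho_{j+2}$. If this holds the swap finishes the argument; if $\rho_{j-1} \leq \rho_{j+2}$ I would instead perform a larger reflection/bubble-sort on the right tail $(\rho_j, \rho_{j+1}, \dots)$ into non-increasing order, which can be realized as a composition of adjacent swaps each of which is non-increasing in $H$ and at least one of which is strict, again contradicting minimality. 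One must be slightly careful that a single adjacent swap in a long bubble-sort could momentarily \emph{increase} $H$; the clean fix is to use the classical fact that the non-increasing rearrangement of a finite sequence can be reached by a sequence of adjacent transpositions applied only at ``inversions'' $(\rho_k < \rho_{k+1})$, and to track the total change $\Delta H = -2\sum (\text{positive terms})$ telescoping over the sort — or, even more directly, to invoke a discrete Hardy–Littlewood type inequality stating $\sum_j \rho_j \rho_{j-1} \leq \sum_j \rho_j^{\downarrow}\rho_{j-1}^{\downarrow}$ for the monotone rearrangement $\rho^{\downarrow}$, with equality only when $\rho$ is already (a translate of) a two-sided monotone profile about its max.

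I expect the main obstacle to be precisely this bookkeeping: ensuring that the rearrangement toward monotonicity can be carried out so that $H$ is monotone (non-increasing) along the way and \emph{strictly} decreases at some point, rather than just asserting that the fully-sorted profile has lower or equal energy. The cleanest route is probably to state and use the one-dimensional discrete Hardy–Littlewood rearrangement inequality for the bilinear form $\sum_j u_j u_{j-1}$ (which is a standard ``sort both factors the same way'' statement, provable by the swap computation above iterated over inversions), deduce $H(\rhob) \geq H(\rhob^{\downarrow})$ where $\rhob^{\downarrow}$ is the symmetric decreasing rearrangement, and then note that equality in the minimizer forces $\rhob$ itself to already be non-increasing about its maximum (up to translation). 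A minor additional point to dispatch: the rearrangement preserves mass $\sum\rho_j$ trivially and preserves nonnegativity, so $\rhob^{\downarrow}$ is an admissible competitor; and on the bi-infinite lattice the finiteness of mass guarantees $\rho_j \to 0$, so the max is attained and the rearrangement is well-defined. With the inequality in hand the lemma follows immediately.
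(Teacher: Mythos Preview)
Your rearrangement strategy is the right idea, but the execution has a real gap. The adjacent-swap formula $\Delta H = -2(y-x)(a-c)$ is correct, and you rightly flag that it fails when $\rho_{j-1} \leq \rho_{j+2}$. However, neither proposed fix closes the gap. Consider the right tail $(\rho_{j_0}, \rho_{j_0+1}, \dots) = (5, 0, 0, 4, 3, 0, \dots)$: the only strict inversion is at the $(0,4)$ bond, where $a=0 < 3 = c$, so the unique available adjacent swap strictly \emph{increases} $H$; any bubble-sort path to the monotone profile must pass through a step where $K = \sum_j \rho_j \rho_{j-1}$ drops, so there is no telescoping into a sum of positive terms. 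The Hardy--Littlewood appeal also misfires: HL lets you rearrange the two factors \emph{independently}, so with $f_j = \rho_j$ and $g_j = \rho_{j-1}$ (same multiset) the HL bound is $\sum_j f_j^{\downarrow} g_j^{\downarrow} = \sum_j (\rho_j^{\downarrow})^2 = \sum_j \rho_j^2$, which is just Cauchy--Schwarz and not the constrained-permutation inequality $\sum_j \rho_j \rho_{j-1} \leq \sum_j \rho_j^{\downarrow} \rho_{j-1}^{\downarrow}$ you actually need. That inequality (and its equality case) is a Riesz-type statement requiring its own proof.

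The paper supplies exactly that, but via a \emph{non-adjacent} block move rather than swaps. Inductively, having placed the $n{+}1$ largest values at positions $0,\dots,n$, it locates the maximum $\rho_k$ of the remaining tail, moves it to position $n{+}1$, shifts the block $\rho_{n+1},\dots,\rho_{k-2}$ two sites to the right, and places the larger of $\rho_{k-1},\rho_{k+1}$ at position $n{+}2$ (the smaller at $k{+}1$). A short direct computation then shows $K$ strictly increases at each such step. In the $(5,0,0,4,3)$ example this produces $(5,4,3,0,0)$ in a single move, exactly where your adjacent-swap argument stalls.
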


\begin{proof}
  Without loss of generality, we assume that the mass is $1$.  Since
  $\sum \rho_j =1$, we know that $\rho_j \rightarrow
  0$ as $j \to \pm \infty$. This implies that we may find a unique (up to multiplicity)
  maximum value and that the multiplicity of this maximum is finite.

  The proof will proceed by assuming that if $\rho$ is not
  non-increasing about a maximum value, one can construct a new
   $\rhobt$ with $H(\rhobt)< H(\rhob)$. The requisite $\rhobt$
  will be a rearrangement of $\rhob$. Since rearrangements do not
  alter the first expression in \eqref{altHinphase}, it suffices to
  show that we will decrease the second term in
  \eqref{altHinphase}. For future reference, we define the function
  $K$ by $H(\rhob)=\big(\sum \rho_j^2 \big)- 2 K(\rhob)$. To
  summarize, our goal is to define a rearrangement $\rhobt$ so that
  $K(\rhobt) > K(\rhob)$ as this will imply $H(\rhobt) <H(\rhob)$.

  We will explain how to construct the rearrangement on $\{\rho_j : j >0\}$ as a
  symmetric argument works for $j < 0$. We will return to this at the end of the proof. We will produce $\rhobt$ through an integrative induction procedure. $\rhob^{(n)}$ will represent the  $n$th step in the induction.

   We assume that the current $\rhob^{(n)}$ satisfies five properties:
   \begin{enumerate}
     \item  $\rhob^{(n)}$ is a rearrangement of the positive indices of $\rhob$ and agrees with $\rhob$ on the non-positive indices.
     \item The elements $(\rho_0^{(n)},\dots,\rho_{n}^{(n)})$ are monotone non-increasing.
     \item $\rho_{n}^{(n)} \geq \rho_{i}^{(n)}$ for all $i > n$.
     \item $K(\rhob^{(k)}) < K(\rhob^{(k+1)})$ for all $0 \leq  k < n$.
     \item $\rho_j^{(k)}= \rho_j^{(k+1)}$ for all $0\leq j \leq k$ and $0 \leq  k < n$.
  \end{enumerate}
  If we set $\rhob^{(0)}=\rhob$, then the induction hypothesis is satisfied at $n=0$ since it is clearly a rearrangement, $\rho_0 \geq \rho_j$ for all $j >0$, and the last two conditions are trivially satisfied as there are no $k$ satisfying  $k \geq 0$ and  $k <0$.

  If we can proceed inductively, we will succeed in creating a sequence $\rhob^{(n)}$ of successive rearrangements of $\rhob$ so that the $n$th step is non-increasing between $0$ and $n$ and
     $K(\rhob)=K(\rhob^{(0)}) > K(\rhob^{(1)})> \cdots >  K(\rhob^{(n)})>  K(\rhob^{(n+1)})$.

  Furthermore since if $i >n$, $\rhob^{(n)}$ and $\rhob^{(i)}$ agree in the first $n$ coordinates, the limit $\rhob^{(\infty)}$ is well defined and is a non-increasing rearrangement of $\rho$ with $ K(\rhob) >  K(\rhob^{(\infty)})$. Setting $\rhobt=\rhob^{(\infty)}$  completes the proof.

  All that remains is to show that if the induction hypothesis is satisfied at level $n$, we will construct a $\rhob^{(n+1)}$ so it is satisfied at level $n+1$. There are three cases with the last case having two sub-cases.

  \begin{itemize}
  \item[Case 1]:  If
$\rho_{n+1}^{(n)} \geq \rho_j^{(n)}$ for all $j >n+1$, then define
$\rhob^{(n+1)}=\rhob^{(n)}$. This choice satisfies the induction hypothesis at level $n+1$.  Clearly $\rhob^{(n+1)}$ is a rearrangement of $\rhob$ since $\rhob^{(n)}$ was. The second two induction conditions are satisfied since  $(\rho_{0}^{(n+1)},\dots,\rho_{n+1}^{(n+1)})$ is non-increasing  by the induction hypothesis and  $\rho_{n+1}^{(n+1)} \geq \rho_j^{(n+1)}$ for all $j > n+1$ by the assumption of this case. The last two induction assumptions hold since $K(\rhob^{(n+1)})=K(\rhob^{(n)})$ and $\rhob^{(n+1)}_i=\rhob^{(n)}_i$ for all $i$.
\item[Case 2:]If there is a $k > n$ such that $\rho_k^{(n)} > \rho_n^{(n)} $,  we will choose a $k$
  such that   $\rho_k^{(n)} \geq  \rho_j^{(n)} $ for all $j \geq n$. This is always possible since $\rho_j^{(n)} \rightarrow 0$ as $j \rightarrow \infty$. We now split this case into two sub-cases. In both cases we define
  $\rho_{n+1}^{(n+1)}=\rho_{k}^{(n)}$,  $\rho_{j}^{(n+1)}=\rho_{j-2}^{(n)}$ for $j \in \{n+3,\dots k\}$, and
  $\rho_{j}^{(n+1)} =\rho_{j}^{(n)}$ for $j \geq k+2$ or
  $0\leq j \leq n$. This leaves $\rho_{n+2}^{(n+1)}$ and $\rho_{k+2}^{(n+1)}$ unassigned and  $\rho_{k-1}^{(n)}$ and  $\rho_{k+1}^{(n)}$ utilized. We make different choices for these values depending on the relative magnitude of $\rho_{k+1}^{(n)}$ and  $\rho_{k-1}^{(n)}$.

  In both cases, it will be clear by the construction that $\rhobt^{(n+1)}$ will be a rearrangement and that $\rho_{j}^{(n+1)}=\rho_{j}^{(n)}$ for $0\leq j\leq n$. From the induction hypothesis and the fact that $\rho_{n+1}^{(n+1)}=\rho_k^{(n)} \geq  \rho_j^{(n)} $ for all $j \geq n$, we see that $(\rho_{0}^{(n+1)},\dots,\rho_{n+1}^{(n+1)})$ is non-increasing and that $\rho_{n+1}^{(n+1)} \geq \rho_{j}^{(n+1)}$ for $j \geq n+2$. All that remains is to make the remaining two assignments and show that $K(\rhob^{(n+1)}) > K(\rhob^{(n)})$. This last requirement forces that following two cases.
  \begin{itemize}
  \item[Case 2a:]If $\rho_{k+1}^{(n)} \geq \rho_{k-1}^{(n)}$, we set
    $\rho_{n+2}^{(n+1)} = \rho_{k+1}^{(n)}$ and 
    $\rho_{k+1}^{(n+1)} = \rho_{k-1}^{(n)}$. To check the final induction hypothesis, observe that
    \begin{multline*}
        K(\rhob^{(n+1)}) -  K(\rhob^{(n)})  = \rho_n (\rho_k - \rho_{n+1}) + \rho_{k+1} (\rho_{n+1} - \rho_{k+2})\\ + \rho_{k-1} (\rho_{k+2}- \rho_k) \\
                = (\rho_n - \rho_{k+1}) (\rho_k - \rho_{n+1} ) + (\rho_{k-1} - \rho_{k+1})( \rho_{k+2} - \rho_k) > 0.
    \end{multline*}
\item[Case 2b:] If $\rho_{k+1}^{(n)} < \rho_{k-1}^{(n)}$, we set
   $\rho_{n+2}^{(n+1)} = \rho_{k-1}^{(n)}$ and 
   $\rho_{k+1}^{(n+1)} = \rho_{k+1}^{(n)}$. To check the final induction hypothesis, observe that
   \begin{multline*}
     K(\rhob^{(n+1)}) -  K(\rhob^{(n)})   = \rho_n (\rho_k - \rho_{n+1}) + \rho_{k-1} (\rho_{n+1} - \rho_{k-2})\\
     + \rho_{k+1} (\rho_{k-2}- \rho_k) \\
 = (\rho_n - \rho_{k-1}) (\rho_k - \rho_{n+1} ) + (\rho_{k+1} - \rho_{k-1})( \rho_{k-2} - \rho_k) > 0.
   \end{multline*}
  \end{itemize}
\end{itemize}
The above procedure rearranged the positive indices. The negative indices can be dealt with in a completely analogous way. Alternatively, one can reflect the $\rhob^{(\infty)}$ obtained above about $j=0$ and rerun the above algorithm on the resulting state to obtain a $\rhobt$ that is monotone non-increasing in both directions.
 \end{proof}
 \begin{rem}
  The rearrangement procedure in the proof takes an initial $\rhob$ with infinitely many non-zero elements and produces a strictly positive non-increasing rearrangement $\rhob^{(\infty)}$. If the initial $\rhob$ has only finitely many non-zero elements then the resulting  $\rhob^{(\infty)}$ is compactly supported with a strictly positive connected region and infinitely many zeros on either side.
 \end{rem}

\begin{lem}
\label{lem:5over3}
 Any  minimizer $\rhob$  of the energy $H$  with finite mass has $\rho_0 \leq \frac56 (\rho_1 + \rho_{-1})$.  
\end{lem}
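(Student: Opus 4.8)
The plan is to argue by contradiction via a one-parameter, mass-preserving perturbation that pushes mass out of the central node $0$ into its two neighbors, combined with a first-order expansion of the in-phase energy \eqref{altHinphase}. Let $\rhob$ be a minimizer of $H$ with finite mass (so $\rho_j\geq 0$ and $\rhob\in\ell^1$, and, as in \Cref{lem:dec}, $0$ is the location of the maximum), and suppose toward a contradiction that $\rho_0 > \tfrac56(\rho_1+\rho_{-1})$; in particular $\rho_0 > 0$. For $\epsilon>0$ small, define $\rhob^{\epsilon}$ by $\rho^{\epsilon}_0 = \rho_0 - 2\epsilon$, $\rho^{\epsilon}_{1} = \rho_1 + \epsilon$, $\rho^{\epsilon}_{-1} = \rho_{-1} + \epsilon$, and $\rho^{\epsilon}_j = \rho_j$ for $|j|\geq 2$. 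Since $\rho_0>0$, this is again a nonnegative $\ell^1$ sequence of the same mass, hence admissible as a competitor.

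The next step is to expand $H(\rhob^{\epsilon})$. Only the self-terms $\tfrac12\rho_j^2$ for $j\in\{-1,0,1\}$ and the cross-terms $-2\rho_j\rho_{j-1}$ for $j\in\{-1,0,1,2\}$ are affected, and a direct expansion gives
\begin{equation*}
  H(\rhob^{\epsilon}) - H(\rhob)
  = \big(-6\rho_0 + 5(\rho_1+\rho_{-1}) - 2(\rho_2+\rho_{-2})\big)\,\epsilon + 11\,\epsilon^2 .
\end{equation*}
Equivalently, the linear coefficient is the directional derivative $\big\langle \nabla H(\rhob),\, -2\be_0+\be_1+\be_{-1}\big\rangle$, where $\big(\nabla H(\rhob)\big)_j = \rho_j - 2\rho_{j-1} - 2\rho_{j+1}$. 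Under the contradiction hypothesis $6\rho_0 > 5(\rho_1+\rho_{-1})$ together with $\rho_{\pm 2}\geq 0$, this coefficient is strictly negative, so $H(\rhob^{\epsilon}) < H(\rhob)$ for all sufficiently small $\epsilon>0$, contradicting minimality. This gives $\rho_0\leq\tfrac56(\rho_1+\rho_{-1})$; the case $\rho_0 = 0$ is immediate since $\rho_{\pm1}\geq0$.

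The only place care is needed — and the main (mild) obstacle — is the bookkeeping in the expansion: one must include all three self-terms and all four affected cross-terms, since dropping a single $-2\rho_j\rho_{j-1}$ would change the constant $5$ and hence the claimed $\tfrac56$. Beyond that the argument is robust: the correction $-2(\rho_2+\rho_{-2})$ only strengthens the inequality, no monotonicity of $\rhob$ is actually used (so the identical perturbation centered at an arbitrary site $j$ in fact shows $\rho_j\leq\tfrac56(\rho_{j-1}+\rho_{j+1})$ for every $j$), and the bound is sharp: at the three-mode state \eqref{enmin} one has $\rho_{\pm2}=0$ and $\rho_0 = \tfrac{5m}{11} = \tfrac56\cdot\tfrac{6m}{11} = \tfrac56(\rho_1+\rho_{-1})$, consistent with the inequality being the first-order optimality condition in the direction $-2\be_0+\be_1+\be_{-1}$.
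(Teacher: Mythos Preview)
Your proof is correct and follows essentially the same approach as the paper: a mass-preserving perturbation that moves mass from the central node into its two neighbors, followed by a first-order expansion showing the energy decreases when $\rho_0>\tfrac56(\rho_1+\rho_{-1})$. The only difference is an inessential rescaling of the perturbation parameter (you take $\rho_0-2\epsilon,\ \rho_{\pm1}+\epsilon$ while the paper takes $\rho_0-\epsilon,\ \rho_{\pm1}+\tfrac{\epsilon}{2}$), and your bookkeeping of the linear and quadratic coefficients is in fact cleaner than the paper's.
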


\begin{proof}
  We will show that if $\rho_0 > \frac56 (\rho_1 + \rho_{-1})$, then we can rearrange and decrease the energy.  Consider a solution of the form
  \[
  \rhob = (\ldots, \rho_{-2}, \rho_{-1}, \rho_0, \rho_1, \rho_2,
  \dots).
  \]
  We consider mass conserving rearrangements of the form
  \begin{equation*}
    \rhobt = (\ldots, \rho_{-2}, \rho_{-1}+\frac{\epsilon}{2}, \rho_0-\epsilon, \rho_1+\frac{\epsilon}{2}, \rho_2, \dots)
  \end{equation*}
for $\epsilon > 0$ sufficiently small.  Note the $\frac12$ on the
$\rho_1$ term that occurs in order to conserve the symmetric mass
constraint.  Then, the inequality follows by a simple check as to when
$H( \rhobt) < H( \rhob)$.  Indeed, calculating we have
\begin{align*}
H(\rhobt) & = H(\rhob) + \epsilon( \rho_{-1} +\rho_1 -2 \rho_0) + \frac32 \epsilon^2 \\
& \hspace{1cm} - 4 (  \frac{\epsilon}{2} ( \rho_{-2}  + 2 \rho_0 - 2(\rho_1 + \rho_{-1})  + \rho_{2} )  -  \epsilon^2 ) \\
& = H(\rhob) + \epsilon\big( 5 (\rho_1 +\rho_{-1} )  - 6 \rho_0   -2 ( \rho_{-2}  + \rho_{2}   )   \big)  +  \frac{11}{2} \epsilon^2.
\end{align*}
Hence, if $ 5 (\rho_1 +\rho_{-1} ) - 6 \rho_0 < 0$ (or equivalently 
$\rho_0 > \frac56 (\rho_1 +\rho_{-1} ) $), then we can decrease the
energy by choosing $\epsilon$ sufficiently small.  
\end{proof}

\begin{rem}
Note, the $3$ node solution given in \eqref{enmin} satisfies that
$\rho_0 = \frac56 (\rho_{-1} + \rho_1)$ and thus saturates the
estimate in Lemma \ref{lem:5over3}.
\end{rem}

Recall that the support of $\rho$ is the smallest interval $I \subset \mathbb{Z}$ so that $\rho_j=0$ if $j \not \in I$. As usually, if the set of such intervals is empty the support is taken to be $(-\infty,\infty)$. We now address the support of the minimizer.
\begin{lem}
\label{lem:compact}
Any minimizer of the energy $H$ with finite mass is compactly supported and is strictly positive on its supported.
\end{lem}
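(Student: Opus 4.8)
First I would normalize to unit mass, which is harmless: by \Cref{prop:inPhaseMin} a minimizer may be taken of the in-phase form, and then $\sum_j m\rho_j=m\sum_j\rho_j$ together with $H(m\rhob)=m^2H(\rhob)$ reduce the mass-$m$ problem to the mass-$1$ problem (and $m=0$ is trivial). So let $\rhob$ be a minimizer of $H(\rhob)=\sum_j\big(\tfrac12\rho_j^2-2\rho_j\rho_{j-1}\big)$ over all nonnegative sequences $\rhob\in\ell^1$ with $\sum_j\rho_j=1$. I would prove the two assertions separately: strict positivity on the support falls out of \Cref{lem:dec}, while finiteness of the support follows from the Euler--Lagrange relations together with summability.

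For positivity: by \Cref{lem:dec} the minimizer is unimodal, non-decreasing up to its maximum (at some index $j^*$, where $\rho_{j^*}>0$) and non-increasing afterwards. If $I$ denotes the support --- the smallest interval containing $\{j:\rho_j>0\}$, with either end possibly infinite --- then for $j\in I$ with $j\le j^*$ there is an index $j'\le j$ with $\rho_{j'}>0$ (namely $\min I$ when it is finite, and otherwise such $j'$ certainly exists), so monotonicity gives $\rho_j\ge\rho_{j'}>0$; the case $j\ge j^*$ is symmetric. Hence $\rho_j>0$ for every $j\in I$, which is the ``strictly positive on its support'' claim, and this argument uses nothing about finiteness of $I$.

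For finiteness of $I$, the plan is to extract the Euler--Lagrange relations. Fix $j$ with $\rho_j>0$ and any $k\ne j$, and for small $\epsilon>0$ consider the admissible, mass-preserving perturbation that replaces $\rho_j$ by $\rho_j-\epsilon$ and $\rho_k$ by $\rho_k+\epsilon$. A direct expansion of $H$ shows this changes the energy by
\begin{equation*}
  \epsilon\,(L_j-L_k)+O(\epsilon^2),\qquad L_j\eqdef-\rho_j+2\rho_{j-1}+2\rho_{j+1},
\end{equation*}
the linear term having this same form whether or not $k=j\pm1$ (only the $O(\epsilon^2)$ coefficient changes). Minimality forces $L_j\ge L_k$ for every such $k$, and since $\rhob\in\ell^1\subset\ell^\infty$ makes $\sup_k L_k$ finite, we get $L_j=\sup_k L_k\eqdef\omega$ for all $j\in I$. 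Thus the minimizer solves the linear recursion $-\rho_j+2\rho_{j-1}+2\rho_{j+1}=\omega$ on $I$, with $\rho\equiv0$ just outside $I$ --- precisely the in-phase stationary system \eqref{eq:discreteLap}.

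Finally, suppose $I$ were infinite. Applying if necessary the reflection $j\mapsto-j$ (which preserves both $H$ and the mass constraint), I may assume $I$ is unbounded above, so $\rho_j>0$ for arbitrarily large $j$. Letting $j\to+\infty$ in the recursion and using $\rho_j\to0$ (from $\rhob\in\ell^1$) forces $\omega=0$; the recursion then reads $2\rho_{j+1}=\rho_j-2\rho_{j-1}$ on $I$, so nonnegativity of $\rho_{j+1}$ gives $\rho_j\ge2\rho_{j-1}$ for every $j\in I$. Picking $j_0\in I$ with $\rho_{j_0}>0$ and iterating (all of $j_0,j_0+1,j_0+2,\dots$ lie in $I$) yields $\rho_{j_0+n}\ge2^n\rho_{j_0}\to\infty$, contradicting $\rho_j\to0$; hence $I$ must be finite. (Equivalently, the characteristic roots $\tfrac14(1\pm i\sqrt{15})$ of the homogeneous recursion lie on the unit circle, so its bounded solutions are a constant plus an undamped oscillation and can be summable only if they vanish identically.) I expect the one genuinely careful step to be the Euler--Lagrange computation --- getting the one-sided, inequality-constrained variation and the nearest-neighbour transfers right --- whereas the decisive and simple observation is that an infinite support forces the multiplier $\omega$ to vanish, after which $2\rho_{j+1}=\rho_j-2\rho_{j-1}$ plainly precludes a nonnegative summable tail.
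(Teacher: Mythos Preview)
Your proof is correct, and for the compact-support half it takes a genuinely different route from the paper. The strict-positivity argument is essentially the paper's: both invoke \Cref{lem:dec} and observe that a zero inside the support would contradict unimodality.

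For compact support, the paper argues by an explicit mass-moving perturbation: assuming an infinite tail, it truncates at level $N$, dumps the residual tail mass $\widetilde m_N$ onto $\rho_0$, and uses \Cref{lem:5over3} (really only $\rho_0 < 2(\rho_1+\rho_{-1})$) to show the energy strictly drops once $\widetilde m_N$ is small. Your approach instead extracts the Euler--Lagrange relation $-\rho_j+2\rho_{j-1}+2\rho_{j+1}=\omega$ on the support via one-sided mass transfers, then observes that an infinite support forces $\omega=0$ by taking $j\to\infty$, after which $2\rho_{j+1}=\rho_j-2\rho_{j-1}$ together with $\rho_{j+1}\ge0$ gives $\rho_j\ge 2\rho_{j-1}$ and hence exponential blow-up. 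This is clean and has two advantages: it does not need \Cref{lem:5over3} at all, and it ties the minimizer directly to the in-phase stationary system \eqref{eq:discreteLap}, which the paper only connects to later. The paper's argument, by contrast, stays purely variational and avoids deriving the first-order conditions, keeping the whole minimizer analysis in the ``rearrange and compare'' style used throughout \Cref{s:enmin}. Your first-variation computation (including the check that adjacency $k=j\pm1$ only affects the $O(\epsilon^2)$ term) is the one place to be careful, and you have it right.
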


\begin{proof}
  We prove the second claim first. By \Cref{lem:dec}, we know that any finite mass energy minimizer must be non-increasing around its maximum values. Let $\rho_j=0$ be a location in the support of a minimizer $\rhob$. Without loss of generality, let us assume that $j$ is to the right of the   maximum values. Since the $\rhob$ is non-increasing, $\rho_k=-$ for all $k \geq j$ which contradicts the fact that $j$ was in the support to start with.

  We now turn to the first claim that any such minimizer has compact
  support.  Again, with out loss of generality we can assume that it
  has mass one.  By Lemma \ref{lem:dec}, let us assume without loss of
  generality that the maximum of the minimizer exists at $j=0$ and
  that $\rho$ is non-increasing off of the maximum. Suppose there
  exists a minimizer $\rhob = \{ \rho_j \}_{j=-\infty}^\infty$ with mass
  $1$ and $\rho_j > 0$, for all $j \in \mathbb{N}$.  By Lemma
  \ref{lem:5over3}, we may further assume
  \begin{equation*}
\rho_0 \leq \frac56 (\rho_1 + \rho_{-1}),    
  \end{equation*}
although in fact we only need
\begin{equation*}
  \rho_0 < 2 (\rho_1 + \rho_{-1}).
\end{equation*}
The finite mass condition implies that for any $\epsilon > 0$, there
exists $N \gg 1$ such that
\begin{equation*}
  {\widetilde m}_N = \sum_{j=-\infty}^{-N} \rho_j + \sum_{j=N}^\infty   \rho_j < \varepsilon .
\end{equation*}
Let $\varepsilon > 0$ to be fixed later. Then we propose a mass
preserving modification $\rhobt$ of $\rhob$ define by
\begin{equation*}
  \rhobt = (\vec{0}, \rho_{-N}, \rho_{-N+1}, \dots, \rho_{-1}, \rho_0+{\widetilde m}_N  , \rho_1, \dots, \rho_{N}, \vec{0} )\,.
\end{equation*}
Direct computation shows that 
\begin{align*}
H( \rhobt ) &= H( \rhob ) + {\widetilde m}_N ( \rho_0- 2 \rho_1 - 2 \rho_{-1}) + \frac12 {\widetilde m}_N^2 \\
& + 2 \rho_{N} \rho_{N+1} + 2 \rho_{-N} \rho_{-N-1} \\
&  + \sum_{j=N+1}^\infty \rho_j (2 \rho_{j+1} - \rho_j) + \sum_{j=- \infty}^{-N-1} \rho_j (2 \rho_{j-1} - \rho_j).
\end{align*}
By the above assumptions,
\begin{align*}
&\frac12 {\widetilde m}_N^2 + 2 \rho_{N} \rho_{N+1} + 2 \rho_{-N} \rho_{-N-1}  \\
& \hspace{14mm} + \sum_{j=N+1}^\infty \rho_j (2 \rho_{j+1} - \rho_j) + \sum_{j=-\infty}^{-N-1} \rho_j (2 \rho_{j-1} - \rho_j) < \frac{13}{2} \varepsilon^2,
\end{align*}
and hence
\begin{equation*}
  H( \rhobt ) <  H( \rhob ) + {\widetilde m}_N ( \rho_0- 2 \rho_1 - 2 \rho_{-1})  + \frac{13}{2} \varepsilon^2.
\end{equation*}
Since $\rho_0 \leq \frac56 (\rho_1 + \rho_{-1})$, we have that
\begin{equation*}
  ( \rho_0- 2 \rho_1 - 2 \rho_{-1}) \leq \frac56 (\rho_1 + \rho_{-1}) - 2 (\rho_1 + \rho_{-1}) < 0.
\end{equation*}
Hence, for $\varepsilon > 0$ sufficiently small, we will have reduced the energy by taking a compactly supported function. This completes the proof of the first claim and thus the proof of the lemma.
\end{proof}

We need to argue that in fact, the three mode solution will be a global energy minimizer.

\begin{lem}
\label{lem:5modes}
Any  minimize of the energy $H$ with finite mass is supported on $5$ or fewer  nodes.  
\end{lem}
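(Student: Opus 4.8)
The plan is to continue the rearrangement strategy already developed, now using a ``compression'' rearrangement that pushes mass inward. By Lemmas \ref{lem:dec} and \ref{lem:compact}, a finite-mass minimizer $\rhob$ is compactly supported, strictly positive on its support, and non-increasing away from its maximum, which we may place at $j=0$; moreover after reflecting and re-running the algorithm we may take $\rhob$ to be non-increasing in both directions, so its support is an interval $\{-L,\dots,R\}$ with $\rho_{-1}\ge \rho_{-2}\ge\cdots$ and $\rho_1\ge\rho_2\ge\cdots$. Suppose for contradiction the support has $6$ or more nodes, so (say) $R\ge 3$ with $\rho_R>0$ and there are at least two strictly positive nodes strictly to the right of $0$.

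The key step is to compare $\rhob$ with the mass-preserving modification that removes the outermost mass $\rho_R$ and redistributes it inward. Concretely, I would first try the simplest such move: set $\rhobt$ equal to $\rhob$ except $\widetilde\rho_R = 0$ and $\widetilde\rho_{R-1} = \rho_{R-1} + \rho_R$. As in the proof of \Cref{lem:compact}, a direct computation gives
\begin{align*}
H(\rhobt) - H(\rhob) = \rho_R\big(\rho_R - 2\rho_{R-2} - 2\rho_{R-1}\big) + 2\rho_R\rho_{R-1} + \tfrac12\rho_R^2 - (\text{terms}),
\end{align*}
and using that $\rhob$ is non-increasing one wants $\rho_R < 2(\rho_{R-1}+\rho_{R-2})$ (with the convention $\rho_{R-2}$ meaningful since $R\ge 3$ means $R-2\ge 1$, and if $R-2=0$ we use the bound $\rho_0 < 2(\rho_{-1}+\rho_1)$ from \Cref{lem:5modes}'s predecessors) — which is automatic from monotonicity $\rho_R \le \rho_{R-1}$. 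Thus this move strictly decreases $H$ unless the correction terms obstruct it; if the naive move is not quite sign-definite, the fallback is the symmetric move (shifting $\rho_R$ to $\rho_{R-1}$ and simultaneously $\rho_{-L}$ to $\rho_{-L+1}$ when both tails are long), mirroring the $\epsilon/2$ bookkeeping in \Cref{lem:5over3}, or an $\epsilon$-infinitesimal version thereof to keep strict positivity. Either way we reach a contradiction with minimality, forcing the support to be at most $5$ nodes.

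\textbf{Main obstacle.} The delicate point is that the ``obvious'' inward-compression move is \emph{not} unconditionally energy-decreasing: the second term in \eqref{altHinphase} is $2\sum\rho_j\rho_{j-1}$, and moving mass from $\rho_R$ to $\rho_{R-1}$ changes the bond $\rho_{R-1}\rho_R$ (which disappears) and the bond $\rho_{R-2}\rho_{R-1}$ (which grows), while also changing the $\tfrac12\rho_j^2$ terms; the net sign depends on the inequality $\rho_R + \rho_{R-1} \lessgtr 2\rho_{R-2} + \cdots$, so one must exploit monotonicity carefully and possibly choose \emph{which} outer node to collapse (left vs. right) or use an infinitesimal transfer to make the quadratic correction negligible. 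The cleanest route is probably the infinitesimal one: transfer mass $\epsilon$ from the outermost positive node into the interior and compute $\frac{d}{d\epsilon}H\big|_{\epsilon=0}$, which is linear and whose sign is controlled purely by the non-increasing property together with the strict positivity on the support from \Cref{lem:compact}; showing this derivative is strictly negative whenever the support exceeds $5$ nodes is the crux, and it should follow because a node at distance $\ge 3$ from the center is ``paid for'' only by its single inward neighbor bond, which monotonicity makes too small to compensate the self-energy gain. Once the support is known to be $\le 5$, a finite-dimensional optimization (carried out in the sequel) pins down the $3$-mode solution of \eqref{enmin} as claimed in \Cref{mainthm}.
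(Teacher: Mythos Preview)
Your approach has a genuine gap. A direct computation of your proposed move (set $\widetilde\rho_R=0$, $\widetilde\rho_{R-1}=\rho_{R-1}+\rho_R$) gives
\[
H(\rhobt)-H(\rhob)=\rho_R\big(3\rho_{R-1}-2\rho_{R-2}\big),
\]
which is negative only when $\rho_{R-1}<\tfrac23\rho_{R-2}$; mere monotonicity $\rho_{R-1}\le\rho_{R-2}$ does not force this. Your infinitesimal fallback fails for a more structural reason: if $\rhob$ is a minimizer, it is a constrained critical point, so \emph{every} mass-preserving transfer between strictly positive nodes has vanishing first variation. Indeed, subtracting the Euler--Lagrange relations at $j=R$ and $j=R-1$ yields exactly $3\rho_{R-1}-3\rho_R-2\rho_{R-2}=0$, which is your $\tfrac{d}{d\epsilon}H|_{\epsilon=0}$. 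So no infinitesimal interior move can give a strict decrease, and the heuristic in your ``Main obstacle'' paragraph (``paid for only by its single inward neighbor bond'') cannot close.

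The paper's argument is a \emph{finite, non-local} rearrangement: it removes the two outermost modes $\rho_{\pm N}$ simultaneously and redistributes their mass to the \emph{center three} modes via $\rho_{-1}\mapsto\rho_{-1}+\tfrac12\rho_{-N}$, $\rho_0\mapsto\rho_0+\tfrac12(\rho_{-N}+\rho_N)$, $\rho_1\mapsto\rho_1+\tfrac12\rho_N$. The resulting $H(\rhobt)-H(\rhob)$ has a term $-\tfrac12(\rho_{-N}+\rho_N)(\rho_1+\rho_{-1}-\rho_0)$ whose strict negativity comes from \Cref{lem:5over3} (that is where $\rho_0\le\tfrac56(\rho_1+\rho_{-1})$ is actually used), together with terms of the form $-\rho_N(\rho_2+\rho_0+\tfrac12\rho_{-1}-2\rho_{N-1})$ that are controlled by monotonicity since $N-1\ge 2$. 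The key ideas you are missing are (i) dumping the excised mass at the \emph{center}, not at the neighboring node, and (ii) invoking \Cref{lem:5over3} rather than only monotonicity to get a strict sign.
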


\begin{proof}
  Using Lemma \ref{lem:compact}, we can restrict our attention to a
  compact solution $\rhob = \{ \rho_j \}_{j=-N}^N$ for some finite
  $N$ with at least $\rho_N$ or  $\rho_{-N}$ strictly positive.

  By  \Cref{lem:dec}, we can assume that the $\rhob$ is monotone non-increasing as we head away from the central mode. In particular, this last property implies that $\rho_j \geq 0$ on $|j|\leq N$, or
    \begin{align*}
    \rhob = (\rho_{-N}, \rho_{-N-1},\dots,\rho_{-2},\rho_{-1} , \rho_0 , \rho_1 , \rho_2, \dots, \rho_{N-1} , \rho_N).
  \end{align*}
  
  We define $\rhobt$ removing the outermost modes and distributing their mass over the middle three modes; namely,
  \begin{align*}
    \rhobt = (\rho_{-N-1},\dots,\rho_{-1} + \frac12 \rho_{-N}, \rho_0 + \frac12 \left( \rho_{-N} + \rho_N \right), \rho_1 + 
    \frac12 \rho_N, \dots, \rho_{N-1}).
  \end{align*}
  Notice that $\rhob$ and $\rhobt$ have the same mass. Calculating, we have
  \begin{multline*}
      H(\rhobt)  = H(\rhob)  - \frac12 \left[  \rho_{-1}^2 + \rho_0^2 + \rho_1^2 + \rho_{-N}^2 + \rho_{-N}^2 \right] \\
       + \frac12 \left[  \big( \rho_{-1} + \frac12 \rho_{-N} \big)^2 +   \big( \rho_{0} + \frac12 (\rho_N + \rho_{-N}) \big)^2  +  \big( \rho_{1} + \frac12 \rho_{N} \big)^2   \right] \\
       + 2 \left[ \rho_{-N} \rho_{-N+1} +  \rho_{N-1} \rho_N + \rho_{-2} \rho_{-1} + \rho_{-1} \rho_0 + \rho_{0} \rho_{1} + \rho_1 \rho_2 \right] \\
       - 2 \left[  \rho_{-2} \big(  \rho_{-1} + \frac12 \rho_{-N} \big)  +  \big(  \rho_{-1} + \frac12 \rho_{-N} \big)  \big(  \rho_{0} + \frac12 (\rho_{-N} + \rho_N) \big) \right. \\
       \left. +  \big(  \rho_{0} + \frac12 (\rho_{-N} + \rho_N)
        \big) \big(  \rho_{1} + \frac12 \rho_{N} \big) + \big(
        \rho_{1} + \frac12 \rho_{N} \big) \rho_2 \right].
  \end{multline*}
  Simplifying produces
  \begin{multline*}
    H(\rhobt)      = H(\rhob) - \frac38 \rho_{-N}^2 - \frac38 \rho_N^2 - \frac38 (\rho_N + \rho_{-N})^2 \\
      - \frac12 (\rho_{-N} + \rho_N) ( \rho_1 + \rho_{-1} - \rho_0 ) \\
      - \rho_N ( \rho_2 + \rho_0 + \frac12 \rho_{-1} - 2\rho_{N-1}) \\
      - \rho_{-N} ( \rho_{-2} + \rho_0 + \frac12 \rho_{1} - 2\rho_{-N+1}) .
  \end{multline*}
  We now claim that the sum of the five terms subtracted from $H(\rhob)$ is strictly positive. We will see that some of the terms are strictly positive while others are simply non-negative. Since at the start, we chose $\rho_{+N}+\rho_{-N}>0$ we know that $\rho_{-N}^2 + \rho_N^2+ (\rho_N + \rho_{-N})^2$ is  strictly positive. Using Lemma \ref{lem:5over3} we see that $\rho_1 + \rho_{_1} - \rho_0 >0$ so $(\rho_{-N} + \rho_N) ( \rho_1 + \rho_{-1} - \rho_0 )$ is also strictly positive.  Lemma \ref{lem:dec} implies that $\min(\rho_2, \rho_0) \geq \rho_{N-1}$ and  $\min(\rho_{-2}, \rho_0) \geq \rho_{-N+1}$ so the last two terms of interest are non-negative because
  \begin{align*}
    \rho_N( \rho_2 + \rho_0 + \frac12 \rho_{-1} - 2\rho_{N-1})\geq   \frac12  \rho_N\rho_{-1} &\geq 0, \\
    \rho_{-N}( \rho_{-2} + \rho_0 + \frac12 \rho_{1} - 2\rho_{-N+1}) \geq \frac12 \rho_{-N} \rho_{1}&\geq 0\,.
  \end{align*}
Combining these observations we see that $H(\rhobt) <   H(\rhob)$ meaning that  we can decrease the energy by removing mass from the outermost modes  provided $N-1 \geq 2$ or $N \geq 3$. 
\end{proof}

\begin{thm}\label{thm:3ModeRho}
 The global minimizer of $H$ with fixed mass is $3$-mode solution described in \eqref{enmin}.
\end{thm}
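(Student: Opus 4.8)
The plan is to use the structural lemmas already established to confine a minimizer to a finite-dimensional compact family, and then to finish with a first-order optimality computation together with a comparison of finitely many explicit states. First I would reduce the problem: by the scaling proposition it suffices to fix mass $1$, and by \Cref{prop:inPhaseMin} it suffices to minimize the in-phase functional \eqref{altHinphase} over $\mathcal{A}\eqdef\{\rhob\ge 0:\sum_j\rho_j=1\}$. On $\mathcal{A}$ one has $\sum_j\rho_j\rho_{j-1}\le\sum_j\rho_j^2\le(\sum_j\rho_j)^2=1$, so $H(\rhob)\ge-\tfrac32\sum_j\rho_j^2\ge-\tfrac32$, while the three-mode state \eqref{enmin} shows $\inf_{\mathcal{A}}H\le-\tfrac{7}{22}<0$. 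Granting for the moment that this infimum is attained at some $\rhob^*\in\mathcal{A}$, \Cref{lem:dec}, \Cref{lem:compact}, and \Cref{lem:5modes} show respectively that $\rhob^*$ is non-increasing about its maximum, strictly positive on its support (so that support is an interval of consecutive integers), and that this support has at most $5$ sites; after a translation we may assume $\operatorname{supp}\rhob^*\subseteq\{-2,-1,0,1,2\}$, so that $\rhob^*$ lies in a fixed $4$-dimensional simplex.

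To establish existence, take a minimizing sequence $\rhob^{(k)}\in\mathcal{A}$. The bound $H\ge-\tfrac32\sum_j\rho_j^2$ forces $\sum_j(\rho^{(k)}_j)^2$, and hence $\max_j\rho^{(k)}_j$, to stay bounded below along the sequence; after replacing each $\rhob^{(k)}$ by a non-increasing rearrangement about its maximum (which does not increase $H$, by the construction in the proof of \Cref{lem:dec}) and translating that maximum to $j=0$, a diagonal extraction yields a pointwise limit $\rhob^*$ that is non-increasing about $0$ with $\rho^*_0>0$. Since a non-increasing mass-$1$ configuration satisfies $\rho_j\le(2|j|+1)^{-1}$, any mass escaping to infinity along the sequence is spread ever more thinly and therefore contributes negligibly to both $\sum_j\rho_j^2$ and $\sum_j\rho_j\rho_{j-1}$; hence $H(\rhob^*)=\inf_{\mathcal{A}}H$, and if $\rhob^*$ had total mass $1-\mu$ with $\mu>0$ then $(1-\mu)^{-1}\rhob^*$ would have mass $1$ and energy $(1-\mu)^{-2}\inf_{\mathcal{A}}H<\inf_{\mathcal{A}}H$ (using $\inf_{\mathcal{A}}H<0$), a contradiction; so $\rhob^*\in\mathcal{A}$ attains the infimum. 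I expect this non-compactness step --- ruling out loss of mass to infinity --- to be the only part of the argument that is not a routine finite computation.

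Finally, since $\rho^*_j>0$ on its support $I$, the first-order condition for minimizing $H$ subject to $\sum_j\rho_j=1$ forces $\partial_{\rho_j}H(\rhob^*)=\rho^*_j-2\rho^*_{j-1}-2\rho^*_{j+1}$ to be constant for $j\in I$; together with $\rhob^*\equiv 0$ off $I$ this is exactly the linear system \eqref{e:upmat} with $N=|I|$, which has a unique (and reflection-symmetric) solution by the no-kernel result of \cite{CMOS1} recalled in \Cref{Sec:Stationary}. Solving \eqref{e:upmat} for $N=1,\dots,5$, one finds that for $N=5$ the solution is proportional to $(-1,5,9,5,-1)$, which has negative entries and so is not an admissible density, forcing $|I|\le 4$; and for $N=1,2,3,4$ the symmetric solutions are $(1)$, $(\tfrac12,\tfrac12)$, $(\tfrac{3}{11},\tfrac{5}{11},\tfrac{3}{11})$, and $(\tfrac18,\tfrac38,\tfrac38,\tfrac18)$, with energies $\tfrac12$, $-\tfrac14$, $-\tfrac{7}{22}$, and $-\tfrac{5}{16}$ (these match the energies of the explicit $1$-, $2$-, $3$-, and $4$-mode states recorded around \eqref{eqn:Hminval}). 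Since $-\tfrac{7}{22}<-\tfrac{5}{16}<-\tfrac14<\tfrac12$, the minimizer must be the three-mode state \eqref{enmin}; restoring the uniform phase via \Cref{prop:inPhaseMin}, together with the translation and scaling freedoms, then recovers the full statement of \Cref{mainthm}.
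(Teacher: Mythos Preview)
Your argument is correct, and it takes a genuinely different route from the paper in two respects.

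First, you supply an existence proof for a minimizer on the full lattice $\mathbb{Z}$ (uniform $\ell^2$ lower bound along a minimizing sequence, rearrangement via the construction in the proof of \Cref{lem:dec}, recentering, diagonal extraction with the decay bound $\rho_j\le(|j|+1)^{-1}$ to kill tail contributions, then the scaling contradiction to rule out mass loss). The paper never does this explicitly: its lemmas are all phrased as ``any minimizer has property $X$'' and the passage from \Cref{lem:5modes} to the final comparison tacitly assumes a minimizer exists. On a finite lattice this is immediate by compactness of $S(m)$, but on $\mathbb{Z}$ your step is a genuine addition.

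Second, once confined to at most five sites, you proceed uniformly via the Lagrange/KKT first-order conditions, recognizing them as exactly the linear system \eqref{e:upmat} on the support $I$, invoking the no-kernel theorem from \cite{CMOS1} for uniqueness and reflection symmetry, and then eliminating $|I|=5$ because the solution $\propto(-1,5,9,5,-1)$ is not admissible (this matches the failure at $N=5$ noted in Figure~\ref{f:compact_solns1}(d)). The paper instead uses a further ad hoc mass-moving rearrangement $(\rho_{-2}-\varepsilon,\rho_{-1},\rho_0+2\varepsilon,\rho_1,\rho_2-\varepsilon)$ to pass from five to four modes, then eliminates one variable by the mass constraint, computes the Jacobian of the resulting three-variable function to locate the interior critical point $(\tfrac18,\tfrac38,\tfrac38,\tfrac18)$, and handles the boundary by a separate symmetrization and one-variable minimization on three modes. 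Your approach is cleaner and connects the variational problem directly back to the stationary-state framework of \Cref{Sec:Stationary}; the paper's approach avoids any appeal to constrained-optimization formalism but at the cost of several case-specific calculations. Both land on the same four explicit candidates and the same energy comparison $-\tfrac{7}{22}<-\tfrac{5}{16}<-\tfrac14<\tfrac12$.
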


\begin{rem}\label{rem:b_from _rho}
  \Cref{thm:3ModeRho} gives the three-mode minimizer in terms
  of the $\rho$ variables with group phase velocity $\omega$.  This translates to
$\bb(t)=\bb^*e^{i \omega t+i\theta}$ with $\bb^*=(\cdots,0,b_{-1},b_0,b_1,0,\cdots)$,
$b_{-1}=b_{1}=\sqrt{\frac{3m}{11}}$, $b_0=\sqrt{\frac{5m}{11}}$, and
$\omega=\frac{7 m}{22}$. Here, $\theta$ captures the phase at time zero.
\end{rem}

\begin{rem}
  \Cref{rem:b_from _rho} give the minimizing time evolving solutions
  centered on the zero lattice site. If one only looks at a fixed time
  $t$ then the minimizing solution centered on the zero lattice
  position are given by $e^{i\theta} \bb^*$. More generally, the
  minimizers centered on the $k$th lattice site are given by
  $e^{i\theta}\bb_k^*$ where
  $\bb^*=(\cdots,0,b_{k-1},b_k,b_{k+1},0,\cdots)$. These are the
  minimizing solutions given in \Cref{mainthm}.
\end{rem}

\begin{proof}[Proof of \Cref{thm:3ModeRho}]
Now, we proceed to prove that the optimizer is the three mode solution
described above.  

Since the solution is restricted to at most five modes from Lemma \ref{lem:5modes}, we may argue similarly to Lemma \ref{lem:5over3} to in fact demonstrate that the optimizer has at most 4 modes,  To see this, assume that the optimizer is five modes, 
\begin{equation*}
  \rhob =( \rho_{-2},\rho_{-1},\rho_0, \rho_1, \rho_2).
\end{equation*}
Assume that $\rho_{-2},\rho_2 > 0$.  For $0 < \varepsilon < \min \{ \rho_{-2},\rho_2 \}$, define
\begin{equation*}
  \rhobt = (\rho_{-2}-\varepsilon,\rho_{-1},\rho_0+ 2\varepsilon, \rho_1, \rho_2 - \varepsilon).
\end{equation*}
We compute that
\begin{equation*}
    H(\rhobt) = H(\rhob) - (\rho_{-2} + \rho_2 + 2 (\rho_1 + \rho_{-1} - \rho_0) ) \varepsilon + 3 \varepsilon^2.
\end{equation*}
Hence, we can decrease the energy by moving mass out of at least one of the nodes $\rho_{-2}$ or $\rho_2$.  

Hence, we can restrict ourselves to optimizers over $4$ modes, indexed say $\rho_0, \rho_1,\rho_2$ with the dimensional reduction $\rho_3 = 1- \rho_0 - \rho_1 - \rho_2$ due to the mass constraint.  From here, we may compute the Jacobian exactly over the reduced set of variables and compute to get candidates for the minimizer as the $4$-mode solution, the three mode solution and the $1$-mode solution. On the boundary of the constraint that $\rho_j \geq 0$ for all $j$, the four mode boundary has at most three modes.  Indeed, 
\begin{align*}
  &   H(\rho_0,\rho_1,\rho_2,1-\rho_0-\rho_1 - \rho_2) = \\ 
  & \hspace{1cm} \tfrac12 + \rho_0^2 + \rho_1^2 + 3 \rho_2^2 - \rho_0 - \rho_1 - 3\rho_2 - \rho_0 \rho_1 + 3 \rho_2 \rho_0 + \rho_1 \rho_2.
\end{align*}
The Jacobian over $\rho_0,\rho_1,\rho_2$,
\begin{align*}
    \nabla H = (2 \rho_0 -1  - \rho_1 + 3 \rho_2, 2 \rho_1 -1 - \rho_0 + \rho_2, 6 \rho_2 - 3 + 3 \rho_0 + \rho_1).
\end{align*}
gives one critical point away from the constraint boundary, namely the $4$-mode solution
\[
\rhob = \left( \tfrac18, \tfrac38, \tfrac38,\tfrac18 \right).
\]

Once we go to the boundary of the constraint space, we have at least one mode $0$, so to consider the boundary values, we now restrict to the $3$-mode solution.

First, for any solution with at most three modes, we establish that the solution is symmetric around the maximum.   Let
  $\rhob = (\rho_{-1} , \rho_0, \rho_1)$.  In this notation, the fact that $\rhob$ has mass one translates into
  \begin{align*}
    \rho_{-1} + \rho_{1} + \rho_0 = 1,
  \end{align*}
and the energy can be written as
\begin{align*}
  H(\rhob) = \tfrac12 (\rho_{-1}^2 + \rho_0^2 + \rho_1^2) - 2 (\rho_{-1} + \rho_1) \rho_0.
\end{align*}
Defining a $\rhobt = \left( \frac{\rho_1+ \rho_{-1}}{2}, \rho_0, \frac{\rho_1+ \rho_{-1}}{2} \right)$, we have
\begin{align*}
H(\rhobt) & =  \tfrac14 (\rho_1+ \rho_{-1})^2 + \tfrac12 \rho_0^2 - 2 (\rho_1+ \rho_{-1})( \rho_0) \\
          & = H(\rhob) - \tfrac14 (\rho_1 - \rho_{-1})^2,
\end{align*}
hence the $3$-mode optimizer may be taken to be symmetric, i.e. $\rho_{-1} = \rho_1$.

We now have that $\rho_0 + 2 \rho_1 = 1$, hence
letting $0 < \rho_0 = \alpha \leq 1$, we have
\begin{equation*}
    H(\rho) = \tfrac12 \alpha^2 + \tfrac14 (1-\alpha)^2 - 2 \alpha 
  (1-\alpha) = 1 - \tfrac52 \alpha + \tfrac{11}{4} \alpha^2,
\end{equation*}
which has a minimum when $\alpha = \frac{5}{11}$.  Using \eqref{eqn:Hminval}, for a given mass $m$, we have that for
the three mode solution, denoted say $\rhob^*$, that
\begin{equation*}
  H(\rhob^*) = -\tfrac{7}{22} m^2 ,
\end{equation*}
while recall that for the one mode solution, the energy is $\tfrac{1}{2} m^2$ and the $4$-mode energy is $-\tfrac{5}{16} m^2$, which is larger.
\end{proof}

\section{Linearizing about the 3-mode Minimizers}
\label{Sec:JacHess}

To approximate the invariant measure in a neighborhood of the
optimizer, we need to understand how the Hamiltonian behaves in a
neighborhood.  To that end, we compute the Jacobian and Hessian
around the optimizer. To assist in the computation, we begin by
finding the explicit form of the Lagrange multiplier for any of the
three-mode minimizer. We will initially consider the minimizer $\bb^*$
which centered on the zero  lattice site with phase zero. 

\subsection{The Lagrange Multiplier}
Recall from \eqref{eq:discreteLap} in Section~\ref{Sec:Stationary},
that $\omega$ denoted the Lagrange Multiplier associated with
minimizing the energy for a given mass constraint.

Let $\bb^*$ be one of the three mode in phase solution that minimizing the energy for a mass
$m$. Let $\omega(m)$ be the Lagrange Multiplier associated with its
minimization problem.

By direct calculation, we observe that
\begin{equation}
\label{eqn:lagmult}
\nabla H(\bb^*) = - \tfrac{14  }{11}m \bb^*.
\end{equation}
Also, note that for any vector $\pm{\bm{\eta}} \in  \mathbb{C}^{2N+1}$
such that $| \bb^* + \bm{\eta}|^2 = m$, we have the identity
\begin{equation}
\label{eqn:constraint}
\langle \bb^*, \bm{\eta} + \overline{ \bm{\eta} } \rangle + | \bm{\eta} |^2 = 0.
\end{equation}
Hence, importantly 
\begin{multline}
\label{eqn:LM}
   {\Re}(\nabla H (\bb^*) \eta)  =  \nabla H (\bb^*) ( \bm{\eta}/2 + \overline{ \bm{\eta} }/2 )\\
    = - \tfrac{7}{11}m \bb^* \cdot ( \bm{\eta} + \overline{ \bm{\eta} } ) = \tfrac{7}{11}m |\eta|^2.
\end{multline}
These calculations allow us to exactly construct a Lagrange multiplier for the constrained optimization problem of the energy on the sphere.

\subsection{The Hessian Calculation}\label{sec:Hessian}

We will now prove the results that have been stated in
\Cref{thm:Hessprops}.
We will denote by $\calH(\bb)$ the Hessian of $H$ evaluated at the
point $\bb$. As before, let $\bb^*_0$ be the minimizing state given by  $b_{-1} =
\sqrt{ \frac{3m}{11}} = b_1$, $b_0 = \sqrt{ \frac{5m}{11}}$ on the
modes ${-2,-1,0,1,2}$.
In the complex coordinates $\alpha_j + i \beta_j = b_j$,
$\calH(\bb^*_0)$ consists of a non-zero $10 \times 10$ sub-matrix
surrounded by zeroes. Here we have ignored the fact that we are
actually restricted to the sub-manifold of constant mass $m$.   This
will be dealt with using a Lagrange multiplier later.

We now compute this non-zero, sub-matrix of the Hessian using the coordinate
$\{ \alpha_{-2}, \beta_{-2}, \alpha_{-1}, \beta_{-1}, \dots, \alpha_2,
\beta_2 \}$ which produces
\begin{align*}
\calH(\alpha_j, \alpha_j) &=  6 \alpha_j^2 + 2 \beta_j^2 - 4( \alpha_{j-1}^2 + \alpha_{j+1}^2) + 4( \beta_{j-1}^2 + \beta_{j+1}^2) , \\
 \calH(\beta_j, \beta_j) &=  6 \beta_j^2 + 2 \alpha_j^2 + 4( \alpha_{j-1}^2 + \alpha_{j+1}^2) - 4( \beta_{j-1}^2 + \beta_{j+1}^2) , \\
 \calH(\alpha_j, \beta_j) &=  4 \alpha_j \beta_j - 8 \alpha_{j+1}  \beta_{j+1} - 8 \alpha_{j-1}  \beta_{j-1} ,\\
 \calH(\alpha_j, \alpha_{j+1}) &=  - 8 \alpha_j \alpha_{j+1} - 8 \beta_j \beta_{j+1} , \\
 \calH(\alpha_j, \beta_{j+1}) &=  8 \beta_{j+1} \alpha_j - 8 \alpha_{j+1} \beta_j ,  \\
 \calH(\beta_j, \beta_{j+1}) &= - 8 \beta_j \beta_{j+1} - 8 \alpha_{j} \alpha_{j+1} .
\end{align*}
In terms of our explicit (real) stationary solution, we have the following $10 \times 10$ matrix for the Hessian $\calH( \bm{\alpha},  \bm{\beta})$ with the vectors $\bm{\alpha} = ( \alpha_{-2}, \alpha_{-1}, \dots, \alpha_1, \alpha_2)$, and $\bm{\beta} = (\beta_{-2}, \beta_{-1}, \dots, \beta_1, \beta_2)$ defined on the modes $\{ -2,-1,0,1,2 \}$

\begin{align}
\label{eqn:Hessform}
&\calH (\bb^*_0) = m \left[  \begin{array}{cc}
\H_{\bm{\alpha}} & 0 \\
0 & \H_{\bm{\beta}}
\end{array} \right],
\end{align}
where $\H_{\bm{\alpha}}$ is given by
{\tiny 
\begin{align}
\label{eqn:HessA}  
 \left[  \begin{array}{ccccc}
-4 \left( \frac{3}{11} \right) & 0 & 0 & 0 & 0  \\
* & 6 \left( \frac{3}{11} \right)  - 4  \left( \frac{5}{11} \right)    & -8  \left( \frac{\sqrt{15} }{11} \right)  & 0  & 0 \\
* & *& 6 \left( \frac{5}{11} \right)  - 4  \left( \frac{6}{11} \right)    &  -8  \left( \frac{\sqrt{15} }{11} \right)  & 0  \\
* & * & * & 6 \left( \frac{3}{11} \right)  - 4  \left( \frac{5}{11} \right)  & 0  \\
* & * & * & * & -4 \left( \frac{3}{11} \right) 
\end{array} \right]
\end{align}
}
and $H_{\bm{\beta}}$ given by
{\tiny  
\begin{align}
 \label{eqn:HessB}  
\left[  \begin{array}{ccccc}
4 \left( \frac{3}{11} \right)  & 0 & 0 & 0 & 0 \\
 * & 2 \left( \frac{3}{11} \right) + 4  \left( \frac{5}{11} \right)   & -8 \left( \frac{\sqrt{15}}{11} \right) & 0 & 0 \\
 * & * & 2 \left( \frac{5}{11} \right) + 4  \left( \frac{6}{11} \right)    &  -8 \left( \frac{\sqrt{15}}{11} \right)  & 0 \\
 * & * & * & 2 \left( \frac{3}{11} \right) + 4  \left( \frac{5}{11} \right)    & 0 \\
 * & * & * & * & 4 \left( \frac{3}{11} \right) \\
\end{array} \right].
\end{align}
} 

Above, the $*$ elements are completed through using that the matrix is self-adjoint.  It is however easy to see that this matrix is not positive definite by considering the first and last rows of the matrix $\H_{\bm{\alpha}}$.  However, in order to be at a minimizer, the mass constraint must be taken into account, which will be done later in the characterization of the measure.

In preparation for considering the constraint, it is useful to compute
that 
\begin{equation}
\label{eqn:H2bprod}
\nabla^2 H(\bb^*_0) \bb^* = -\tfrac{42}{11}m \bb_0^*
\end{equation}
and for all $0<\delta<1$, we have
\begin{equation}
H((1-\delta) \bb^*_0)=(1-\delta)^4  H(\bb^*_0) > H(\bb^*_0) = -\tfrac{7}{22}m^2.
\end{equation}
 We can also compute the eigenvalues of $\tfrac{14}{11} I + \H_{\bm{\alpha}}$, which are as stated in Theorem \ref{thm:Hessprops} given by
\begin{equation*}
 -\tfrac{28}{11},\tfrac{2}{11},\tfrac{2}{11},\tfrac{12}{11},\tfrac{60}{11}
\end{equation*}
 with the eigenvectors being
 \begin{equation*}
   \bb_0^*,\be_{-2},\be_2,\be_1-\be_{-1},\sqrt{\tfrac53} (\be_1 + \be_{-1}) - 2 \be_0
 \end{equation*}
 respectively using the notation of $\be_j$ for standard basis vectors such that $\be_j$ has $e_j = 1$ and $e_k = 0$ for all $k \neq 0$.  
 
 Similarly, we have as stated in Theorem \ref{thm:Hessprops} given by the eigenvalues of $\tfrac{14}{11} I + \H_{\bm{\beta}}$ are 
 \begin{equation*}
    0,\tfrac{26}{11},\tfrac{26}{11},\tfrac{40}{11},8 
 \end{equation*}
 with respective eigenvectors
 \begin{equation*}
    \bb_0^*,\be_{-2},\be_2,\be_1-\be_{-1},\sqrt{\tfrac53} (\be_1 + \be_{-1}) - 2 \be_0.
 \end{equation*}
Moving from the real parametrization to complex, all the results in \Cref{thm:Hessprops} are now established.  We note that the perturbation in the direction of $i \bb_0 = (0,\bb_0)$ will
 generate rotation in phase and generate a $0$ direction for the
 Hessian.

\section{A Gaussian approximation}
\label{Sec:Gaussian}

In the introduction, we described a Gaussian approximation using a function $G(\bb)$ in \eqref{eq:G} that was quadratic in the difference $\Proj_{\bb^*}^\perp(\bb)=\bb-\Proj_{\bb^*}(\bb)$. In the forthcoming analysis, it is convenient to use a
version that measures the distance to each $B_k^*$ separately. To this
end, let
\begin{align}\label{eq:bk}
  \widehat \bb_k^*(\bb) = \argmin_{\bb^* \in B_k^*} | \bb - \bb^*|
\end{align}
and define  
$$\Proj_{\bb_k^*}^\perp(\bb)  \eqdef \bb- \Proj_{\bb_k^*}(\bb)$$ 
where
\begin{equation}
  \Proj_{\bb_k^*}(\bb)=\big \langle \bb,  \tfrac{\widehat \bb_k^*(\bb)}{|\widehat \bb_k^*(\bb)|}  \big \rangle \tfrac{\widehat \bb_k^*(\bb)}{|\widehat \bb_k^*(\bb)|}
\end{equation}
is the orthogonal projection of $\bb$ onto $\widehat \bb_k^*$.
 Then, we define the function $G_k\colon \C^{2N+1} \rightarrow \R$  by
\begin{equation} \label{eq:Gk}
   G_k(\bb) = \tfrac12  \big\langle \big(\tfrac{14}{11}M(\bb) I
  +\nabla^2 H(\widehat \bb_k^*(\bb))\big) \Proj_{\bb_k^*}^\perp(\bb) ,  \Proj_{\bb_k^*}^\perp(\bb)  \big\rangle
\end{equation}
with the same caveats as in and around
\Cref{rem:non-unique_distance}. If we define the $\epsilon$-neighborhood $B_k^{*\epsilon}(m)=\{ \bb \in
\C^{2N+1} : |\bb-B_k^*(m)| < \epsilon\}$, then $G(\bb)=G_k(\bb)$ and $ \widehat\bb^*(\bb) =  \widehat\bb_k^*(\bb)$  for all
$\bb \in B_k^{*\epsilon}$ if 
$\epsilon$ sufficiently small. Furthermore the non-uniqueness issues in
the definition of  $ \widehat\bb^*$,  $ \widehat\bb_k^*(\bb)$, $G$ and $G_k$ do not
occur when restricting to $B_k^{*\epsilon}$ for $\epsilon$ sufficiently small.

Observe that these quadratic forms have the same rotational invariance 
$G_k( e^{i \theta} \bb)=G_k(\bb)$ as the energy $H$. We now define the 
family of unnormalized, Gaussian-like measures $\widehat\gamma^N_{\beta,m}$ by 
\begin{align}\label{eq:gamma-hat}
  \widehat\gamma_{\beta,m}^N(\dbb) \eqdef  \sum_{k={-N+1}}^{N-1}  e^{- \beta G_{k}(\bb)}  \Gamma_m(\dbb)\,. 
\end{align}

The following two Corollaries on the Gaussian-like measure we just
defined are relatively standard and not surprising. We sketch the
proofs of each at the end of \Cref{sec:LocalCoordinates}.
\begin{corollary}\label{rem:contration}
  For large $\beta$, the measure $\gamma_{\beta,m}^N$ is 
  concentrated around the set of all three mode minimizers $B^*(m)$.
  More precisely  for any $\epsilon>0$, if $B^{*\epsilon}(m) = \bigcup_k
  B_k^{*\epsilon}(m)$ then
  \begin{align*}
  \frac{\widehat \gamma_{\beta,m}^N(B^{*\epsilon})}{\gamma_{\beta,m}^N(S(m)) }
    \xrightarrow{\beta \rightarrow \infty}  1 \quad\text{and}\quad 
    \frac{1}{\beta^N} \widehat\gamma_{\beta,m}^N(  S(m)) \xrightarrow{\beta 
    \rightarrow \infty} 2\pi (2N-1) \,.
  \end{align*}
\end{corollary}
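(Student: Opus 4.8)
The plan is to analyze the measure $\widehat\gamma^N_{\beta,m}$ by Laplace asymptotics, treating each summand $e^{-\beta G_k(\bb)}\Gamma_m(\dbb)$ separately and then summing the $2N-1$ contributions. The first step is a localization argument: since $G_k$ is a nonnegative quadratic form that vanishes exactly on $B_k^*(m)$ (by construction, $G_k(\bb)=0$ iff $\Proj_{\bb_k^*}^\perp(\bb)=0$, i.e. $\bb$ is a scalar multiple of $\widehat\bb_k^*(\bb)$; combined with $M(\bb)=m$ on $S(m)$ this forces $\bb\in B_k^*(m)$), and since on the compact sphere $S(m)$ the form $G_k$ is bounded below by $c_k\,\mathrm{dist}(\bb,B_k^*(m))^2$ outside a small tube (using the spectral lower bound \eqref{eqn:GopBound}, restricted to directions orthogonal to $\bb_0^*$ and $i\bb_0^*$ — exactly the tangential-to-constraint, transverse-to-orbit directions), the mass of $e^{-\beta G_k}\Gamma_m$ outside any $\epsilon$-tube $B_k^{*\epsilon}(m)$ is exponentially small, namely $O(e^{-\beta c_k \epsilon^2})$, while inside the tube it will turn out to be of order $\beta^{-N}$. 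This already yields the first claimed limit once the normalization is computed, because the numerator $\widehat\gamma^N_{\beta,m}(B^{*\epsilon})$ and the full mass $\widehat\gamma^N_{\beta,m}(S(m))=\gamma^N_{\beta,m}(S(m))$ differ by an exponentially small quantity.

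The core computation is the second claimed limit: $\beta^{-N}\widehat\gamma^N_{\beta,m}(S(m))\to 2\pi(2N-1)$. For a single $k$, work in coordinates on $S(m)$ near the circle $B_k^*(m)$. Parametrize a neighborhood by the phase $\theta\in[0,2\pi)$ along the orbit together with the transverse coordinates; the transverse space is $(4N+2)-1-1 = 4N$ dimensional (ambient dimension $4N+2$, minus one for the sphere constraint $M=m$, minus one for the orbit direction $i\bb_k^*$). On this transverse slice $G_k$ restricts, to leading order, to the quadratic form $\tfrac12\langle Q\,\bxi,\bxi\rangle$ where $Q$ is $\tfrac{14}{11}m I + \nabla^2 H(\bb_k^*)$ restricted to the $4N$-dimensional space spanned by directions orthogonal to both $\bb_k^*$ and $i\bb_k^*$ — and by \Cref{thm:Hessprops} this restricted operator is \emph{strictly positive definite}, with the negative eigenvalue (eigenvector $\bb_k^*$) and the zero eigenvalue (eigenvector $i\bb_k^*$) precisely removed. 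A Gaussian integral over $\R^{4N}$ then gives $\int e^{-\frac\beta2\langle Q\bxi,\bxi\rangle}d\bxi = (2\pi/\beta)^{2N}(\det Q)^{-1/2}$, contributing a factor $\beta^{-2N}$; together with the $\int_0^{2\pi}d\theta = 2\pi$ from the orbit direction, one gets $\widehat\gamma^N_{\beta,m}$-mass of the tube around $B_k^*(m)$ asymptotic to $2\pi(2\pi/\beta)^{2N}(\det Q)^{-1/2}$. Here is the one point that needs care: the stated limit is $2\pi(2N-1)$ with a clean $\beta^{-N}$ prefactor and \emph{no determinant}, so the intended normalization of $\Gamma_m$ and the intended meaning of $\beta^N$ versus $\beta^{2N}$ must be reconciled — presumably the $2N$ transverse \emph{complex} pairs contribute $\beta^{-N}$ after the real Gaussian integral is organized so that each complex coordinate $(\alpha_j,\beta_j)$ pair with its matched eigenvalue yields a single power of $\beta^{-1}$, and the eigenvalue products from \Cref{thm:Hessprops} are arranged (by the factor $m^{2N+1/2}\s_{4N+2}$ in the normalization of $\Gamma_m$, see the disintegration formula) to cancel $(\det Q)^{-1/2}$ down to the constant $1$. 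I would verify this bookkeeping explicitly on the five excited modes using the eigenvalue list in \Cref{thm:Hessprops} and on the $4(N-2)$ far modes where $Q$ acts as $\tfrac{14}{11}m$ times identity.

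The final step is just to sum over $k\in\{-N+1,\dots,N-1\}$, which contributes the factor $(2N-1)$; combined with the $2\pi$ from each orbit integration this gives the asserted constant $2\pi(2N-1)$, and the exponential smallness of cross-terms and tube-complement terms established in step one ensures that $\gamma^N_{\beta,m}(S(m))$ — which equals $\widehat\gamma^N_{\beta,m}(S(m))$ — has the same asymptotics as the sum of the single-$k$ tube integrals. The main obstacle, and the only genuinely non-routine part, is the normalization bookkeeping in step two: getting the power of $\beta$ and the determinant factors to collapse to exactly $2\pi(2N-1)$ requires keeping meticulous track of (i) the real-vs-complex dimension count, (ii) the precise normalization constant built into $\Gamma_m$, and (iii) the curvature corrections to $G_k$ and to the volume element on $S(m)$ away from the circle, all of which I expect contribute only lower-order terms but must be checked. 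Everything else — the localization, the positivity of the transverse Hessian from \Cref{thm:Hessprops}, the Gaussian integral, and the summation — is standard Laplace-method machinery.
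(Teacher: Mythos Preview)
Your approach --- Laplace asymptotics in local coordinates around each circle $B_k^*$, using the strict positivity of $G_k$ on the directions transverse to both $\bb_k^*$ and $i\bb_k^*$ furnished by \Cref{thm:Hessprops} --- is exactly the paper's; its proof of this corollary is a two-sentence sketch that simply points back to the spherical-cap computation in the proof of \Cref{lem:Gibbs_precise_cap_slice}. Your reservation about the bookkeeping is well placed: a standard Laplace integral over the $4N$ real transverse directions produces $\beta^{-2N}$ together with a factor $(\det Q)^{-1/2}$, and the paper's sketch does not pin down the exact power of $\beta$ or explain why the constant collapses to $2\pi(2N-1)$ any more carefully than you do, so on this point you and the paper are on equal footing.
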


\begin{corollary}\label{c:GareTheSame}
For any $A \subset S(m)$, 
  \begin{align}\label{eq:diff_gamma}
\frac{ \widehat \gamma_{\beta,m}^N(A)}{\widetilde \gamma_{\beta,m}^N(A)}
    \xrightarrow{\beta \rightarrow \infty}  1 \quad\text{and}\quad
    \frac{ \widehat \gamma_{\beta,m}^N(A)}{ \gamma_{\beta,m}^N(A)}
    \xrightarrow{\beta \rightarrow \infty}  1
  \end{align}
  where $\gamma_{\beta,m}^N(\dbb)= e^{-\beta G(\bb)}\Gamma_m(\dbb)$,
  see \eqref{eq:gaussian} with $G$ defined in
    \eqref{eq:G}, and  $\widetilde \gamma_\beta^N(\dbb)=
    e^{-\beta \widetilde G(\bb)} \Gamma_m(\dbb) $ with $\widetilde G(\bb) = \sum_{k=-N+1}^{N-1}
    G_{k}(\bb)$. Furthermore, the error terms in \eqref{eq:diff_gamma}
    decay to zero like $e^{- c \beta}$ for some $c>0$ as $\beta
    \rightarrow \infty$.
  \end{corollary}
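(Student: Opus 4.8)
The plan is to prove \Cref{c:GareTheSame} by a Laplace-type analysis localized near the minimizer set $B^*(m)$, using the fact — already recorded in \Cref{rem:contration} and the discussion around \Cref{rem:non-unique_distance} — that all the measures $\widehat\gamma^N_{\beta,m}$, $\widetilde\gamma^N_{\beta,m}$, $\gamma^N_{\beta,m}$ put mass $1-O(e^{-c\beta})$ on the $\epsilon$-tube $B^{*\epsilon}(m)=\bigcup_k B_k^{*\epsilon}(m)$ for any fixed small $\epsilon>0$, and that on each connected component $B_k^{*\epsilon}(m)$ the three functions agree: $G(\bb)=G_k(\bb)$ and $\widehat\bb^*(\bb)=\widehat\bb_k^*(\bb)$ there, and likewise the sum $\widetilde G(\bb)=\sum_\ell G_\ell(\bb)$ differs from $G_k(\bb)$ by the ``cross'' terms $\sum_{\ell\neq k}G_\ell(\bb)$, each of which is bounded below by $c'\epsilon^2$ on $B_k^{*\epsilon}(m)$ since $\bb$ is then at distance $\geq\epsilon/2$ (say) from every $B_\ell^*$ with $\ell\neq k$ and $G_\ell$ is a positive-semidefinite quadratic form in $\Proj_{\bb_\ell^*}^\perp(\bb)$ whose only degenerate directions are the rotation direction $i\widehat\bb_\ell^*$ and the radial direction $\widehat\bb_\ell^*$ — neither of which can make $\Proj_{\bb_\ell^*}^\perp(\bb)$ small while keeping $|\bb-B_\ell^*|\geq\epsilon/2$, by \eqref{eqn:GopBound}.

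Concretely, first I would fix $\epsilon>0$ small enough that the tubes $B_k^{*\epsilon}(m)$ are pairwise disjoint and that within each tube the nearest-point maps and the Hessian-based quadratic forms are unambiguous; then write, for any measurable $A\subset S(m)$,
\begin{equation*}
  \widehat\gamma^N_{\beta,m}(A) = \sum_{k=-N+1}^{N-1}\int_{A\cap B_k^{*\epsilon}(m)} e^{-\beta G_k(\bb)}\,\Gamma_m(\dbb) \;+\; \sum_{k}\int_{A\setminus B^{*\epsilon}(m)} e^{-\beta G_k(\bb)}\,\Gamma_m(\dbb),
\end{equation*}
and observe that on $A\setminus B^{*\epsilon}(m)$ every $G_k(\bb)\geq c\epsilon^2$ (again by the quadratic lower bound \eqref{eqn:GopBound} transverse to the two soft directions, combined with compactness of $S(m)$), so the second sum is $O(e^{-c\beta\epsilon^2})\,\Gamma_m(S(m))$. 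The same decomposition applies verbatim to $\gamma^N_{\beta,m}$ (with $G$ in place of $G_k$, and $G=G_k$ on the $k$th tube) and to $\widetilde\gamma^N_{\beta,m}$ (with $\widetilde G$, and $\widetilde G = G_k + \sum_{\ell\neq k}G_\ell$ on the $k$th tube). On $B_k^{*\epsilon}(m)$ the integrands for $\widehat\gamma$ and $\gamma$ are literally equal, so those contributions are identical; for $\widetilde\gamma$ the integrand on $B_k^{*\epsilon}(m)$ is $e^{-\beta G_k}\cdot e^{-\beta\sum_{\ell\neq k}G_\ell}$ with $0\leq \sum_{\ell\neq k}G_\ell \leq C$ and in fact $\geq c'\epsilon^2>0$ only when $\bb$ is away from $B_k^*$ — but crucially the bulk of the Gaussian mass sits in a $\beta^{-1/2}$-neighborhood of $B_k^*$ where $\sum_{\ell\neq k}G_\ell = O(\beta^{-1/2})$, hence $e^{-\beta\sum_{\ell\neq k}G_\ell} = 1 + O(\beta^{-1/2})$ on the region carrying all but $e^{-c\beta}$ of the mass. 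Dividing, the ratios in \eqref{eq:diff_gamma} become $1+O(\beta^{-1/2}) + O(e^{-c\beta\epsilon^2})$; to upgrade the error to the claimed $e^{-c\beta}$ one then shrinks $\epsilon$ to scale like $\beta^{-1/2}\log\beta$, or more cleanly splits $B_k^{*\epsilon}(m)$ into an inner $\beta^{-1/2}\log\beta$-tube (where $\sum_{\ell\neq k}G_\ell \lesssim \beta^{-1}(\log\beta)^2$, giving multiplicative error $e^{O(\beta^{-1}(\log\beta)^2)}$... but this is too weak) — so the more robust route is to absorb the cross terms into a single bound $e^{-\beta\sum_{\ell\neq k}G_\ell}\in[e^{-C},1]$ and argue that on the complement of the inner tube both numerator and denominator are already $O(e^{-c(\log\beta)^2})$ times the whole integral, so the ratio is $1+O(e^{-c(\log\beta)^2})$; if the sharp $e^{-c\beta}$ is genuinely needed one instead uses that $\sum_{\ell\neq k}G_\ell$ vanishes \emph{on} $B_k^*$ to second order and dominates the Gaussian tail explicitly via a Gaussian integral comparison.

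I expect the main obstacle to be exactly this last point: getting the error rate as strong as $e^{-c\beta}$ rather than a merely sub-exponential $o(1)$. The cross terms $\sum_{\ell\neq k}G_\ell$ are not uniformly bounded below on $B_k^{*\epsilon}(m)$ — they degenerate to $0$ precisely where the $k$th Gaussian concentrates — so one cannot simply pull an $e^{-c\beta}$ out of them directly; the honest argument is a Laplace-method estimate showing that $\int_{B_k^{*\epsilon}} e^{-\beta G_k}(1 - e^{-\beta\sum_{\ell\neq k}G_\ell})\,\Gamma_m$ is exponentially small relative to $\int_{B_k^{*\epsilon}} e^{-\beta G_k}\,\Gamma_m$, which works because $1-e^{-\beta\sum_{\ell\neq k}G_\ell}$ is bounded by $1$ everywhere and is supported (up to $O(e^{-c\beta})$) outside the $\beta^{-1/2+\eta}$-core, where $e^{-\beta G_k}$ is itself $O(e^{-\beta^{2\eta}})$ relative to its peak. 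Choosing $\eta$ appropriately (any $\eta\in(0,1/2)$) and using that $S(m)$ is compact so volumes grow at most polynomially, this yields a bound of the form $e^{-c\beta^{2\eta}}$; to reach the paper's stated $e^{-c\beta}$ one takes $\eta\to 1/2$, i.e.\ compares against the region where $G_k \leq K\beta^{-1}\log\beta$ for large $K$, on whose complement $e^{-\beta G_k}$ is $O(\beta^{-K})$ — so strictly the clean exponential rate $e^{-c\beta}$ should be read as ``faster than any polynomial,'' or one accepts a slightly weaker but still stretched-exponential rate; I would state the proof at the level of the Laplace estimate and remark that the precise rate is not used downstream, only that it is $o(1)$ with room to spare. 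The remaining ingredients — disjointness of tubes, agreement of $G$ and $G_k$ on $B_k^{*\epsilon}$, the transverse positivity \eqref{eqn:GopBound}, and the mass-concentration statement \Cref{rem:contration} — are all either immediate or already established, so the write-up is short once the localization is set up.
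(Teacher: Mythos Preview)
Your localization strategy --- work in disjoint $\epsilon$-tubes $B_k^{*\epsilon}(m)$, bound all $G_\ell$ from below outside the tubes, and compare densities inside --- is exactly the paper's approach: its proof at the end of \Cref{sec:LocalCoordinates} reduces everything to the concentration statement \Cref{rem:contration} together with the claim that $G$, $G_k$, and $\widetilde G$ coincide on small tubes. Your treatment of the ratio $\widehat\gamma/\gamma$ is correct and cleaner than your write-up suggests: on $B_k^{*\epsilon}$ one has $G=G_k$ identically, and the extra summands $\sum_{\ell\neq k}e^{-\beta G_\ell}$ appearing in $\widehat\gamma$ are each $O(e^{-c_0\beta})$ there because $G_\ell\geq c_0>0$ on $B_k^{*\epsilon}$ for every $\ell\neq k$ (the circles $B_\ell^*$ and $B_k^*$ are a fixed positive distance apart). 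No inner/outer splitting is needed, and the exponential rate drops out immediately.

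There is, however, a genuine error in your handling of $\widetilde\gamma$. You assert that the cross terms $\sum_{\ell\neq k}G_\ell$ ``degenerate to $0$ precisely where the $k$th Gaussian concentrates'' and are $O(\beta^{-1/2})$ on a $\beta^{-1/2}$-neighborhood of $B_k^*$. This is false. For $\ell\neq k$ the function $G_\ell$ does \emph{not} vanish on $B_k^*$: since $B_k^*$ and $B_\ell^*$ are separated, $\Proj_{\bb_\ell^*}^\perp(\bb)$ has norm bounded below for $\bb\in B_k^{*\epsilon}$, and by the optimality of $\widehat\bb_\ell^*(\bb)$ it is orthogonal to both non-positive eigendirections $\widehat\bb_\ell^*$ and $i\widehat\bb_\ell^*$ of $\tfrac{14}{11}mI+\nabla^2H(\widehat\bb_\ell^*)$; hence \eqref{eqn:GopBound} gives $G_\ell(\bb)\geq \tfrac{m}{11}\,|\Proj_{\bb_\ell^*}^\perp(\bb)|^2\geq c_0>0$ uniformly on $B_k^{*\epsilon}$. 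Consequently $\widetilde G\geq G_k+(2N-2)c_0$ on each tube, forcing $e^{-\beta\widetilde G}\leq e^{-(2N-2)c_0\beta}\,e^{-\beta G_k}$ and therefore $\widehat\gamma^N_{\beta,m}(A)/\widetilde\gamma^N_{\beta,m}(A)\gtrsim e^{(2N-2)c_0\beta}\to\infty$, not $1$. Your Laplace splitting cannot repair this, because the obstruction sits at the concentration points themselves rather than in the tails. (The paper's own one-line sketch invokes $\sup_{B^{*\epsilon}}|G-\widetilde G|\to 0$ as $\epsilon\to 0$, which suffers from the same oversight; your more careful bookkeeping in fact exposes it.) The $\widehat\gamma/\gamma$ half of the corollary is what your argument actually establishes.
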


  \subsection{Properties the  Gaussian Measures }
We now make some observations on the structure of the ``Gaussian''
measures defined above. In light of \Cref{c:GareTheSame}, we can
consider any of the measures $\gamma_{\beta,m}^N$, $\widehat
\gamma_\beta^N$, or $\widetilde \gamma_{\beta_m}^N$ if we are interested
in the behavior for large $\beta$. In the following it is most
convenient to consider $\widetilde \gamma_{\beta,m}^N/Z$ where $Z=
\widetilde \gamma_{\beta,m}^N(S(m))$ is the normalization constant
needed to make the measure a probability measure. A random draw from
$\widetilde \gamma_{\beta,m}^N/Z$ can be constructed by first drawing
a $k$ from $\{-N+1,\dots,N-1\}$ with uniform probability.  One then
chooses configuration according to the probability measure with density
$e^{\beta G_k(\bb)}/Z_k$ where $Z_k=Z/(2N+1)$ is the correct
normalization constant to make the measure a probability measure.

To further explore the measure $e^{-G_k(\bb)}/Z_k$, we recall from
\eqref{eqn:Hessform}-\eqref{eqn:HessB} that all the entries of $\nabla^2 H(e^{i \theta}\bb^*_k) =
0$ outside the $5$ nearest neighbor modes centered on $k$, namely
$\mathcal{N}_k=\{-2,k-1,k,k+1,k+2\}$. This implies that only the nodes
in $\mathcal{N}_k$ are correlated while the notes outside of
$\mathcal{N}_k$ are independent, identically distributed complex Gaussian
random variables with mean zero and variance $\frac{11}{14}$. This is encoded by the fact that if
$\bm{\eta}$ is such that $\bm{\eta_j}=0$ for $j \in \mathcal{N}_k$
then 
\begin{equation*}
  \big\langle\big( \tfrac{14}{11} I + \nabla^2 H(\bb^*_k)\big)
\bm{\eta}, \bm{\eta}\big\rangle = \tfrac{14}{11}\sum_{j \not \in  \mathcal{N}_k}  |\bm{\eta_j}|^2.
\end{equation*}
We also see that the  global phase of the random state is uniform on $[0,2\pi]$.

\subsection{Defining the measure with intrinsic coordinates on the sphere}
\label{sec:intrinsicG}
Our definitions of the various Gaussian measure preceding section were
build on the intrinsic distance in $\C^{2N+1} \approx \R^{4N+2}$ and
not on the intrinsic distance in $S(m)$ to which our measures are
restricted. We now show that we could have equally defined our
Gaussian measures using the intrinsic distances in $S(m)$. We will see
that the resulting measures are equivalent as $\beta \rightarrow
\infty$, which is the regime where the Gaussian measure approximate $\mu^N_{\beta,m}$.

We start by recalling that for any ${\bf x}, {\bf y } \in S(m)$, the
log-map 
\[
{\rm Log}_{\bf x}\colon S(m) \rightarrow T_{\bf
  x}S(m).
  \]
  While $T_{\bf  x}S(m)$ is just an abstract copy of
$\R^{4N+1}$, we can also view it as $4N+1$-dimensional plane contained
in $S(m)$ viewed as a copy of $\R^{4N+1}$. This $4N+1$-dimensional
plane contains the origin and is perpendicular to ${\bf x}$. These two
facts define it uniquely. 

In these extrinsic coordinates on $S(m)$, the Log Map ${\bf y}\mapsto
{\rm Log}_{\bf x}{\bf y}$ can be written as
\begin{equation}
    \label{eq:logmap}
    {\rm Log}_{\bf x} ({\bf y}) = d({\bf x},{\bf y}) \frac{{\bf y} -
      \frac{\langle {\bf x}, {\bf y} \rangle}{m} {\bf x}}{|{\bf y} -  \frac{\langle {\bf x}, {\bf y} \rangle}{m}  {\bf x}|}, \ \ d({\bf x},{\bf y}) := \sqrt{m} \arccos \left( \frac{\langle {\bf x}, {\bf y} \rangle}{m} \right).
\end{equation}
Here, the distance between points ${\bf x},{\bf y} \in S(m)$ has intrinsic distance $ d({\bf x},{\bf y}) =\sqrt{m}  \arccos
\left( \frac{ \langle {\bf x},{\bf y} \rangle }{m} \right)$ where $\langle {\bf x},{\bf y}
\rangle/m$ is the cosine of the angle between the two
vectors in $C^{2N+1}$.

Without loss of generality, we may assume that $\bb \in S(1)$. We now define
\begin{equation*}
  \GG_k(\bb) = \tfrac12  \big\langle \big(\tfrac{14}{11} I
  +\nabla^2 H(\widehat \bb_k^*(\bb))\big)  {\rm Log}_{\bb_k^*(\bb)}
  \bb ,    {\rm Log}_{\bb_k^*(\bb)}\bb
  \big\rangle .
\end{equation*}
In analogy with the previous measures we also $\widetilde \GG(\bb)=
\sum_{k=-N+1}^{N-1}\GG_k(\bb)$ and 
\begin{equation*}
  \GG(\bb) = \tfrac12  \big\langle \big(\tfrac{14}{11} I
  +\nabla^2 H(\widehat \bb^*(\bb))\big)  {\rm Log}_{\widehat \bb^*(\bb)}
  \bb ,    {\rm Log}_{\widehat \bb^*(\bb)}\bb
  \big\rangle .
\end{equation*}
These new functions biased on the ${\rm Log}$ map all converge in a
strong way to the original functions as we restrict to a smaller
and smaller neighborhood of ${\bb^*(\bb)}$. In preparation, observe
that 
\begin{align*}
\Proj_{\bb_k^*}^\perp(\bb) &= \|\bb - \langle \bb, \widehat
                             \bb_k^*(\bb)\rangle \widehat
                             \bb_k^*(\bb)\| \frac{\bb - \langle \bb, \widehat
                             \bb_k^*(\bb)\rangle \widehat
                             \bb_k^*(\bb)}{ \|\bb - \langle \bb, \widehat
                             \bb_k^*(\bb)\rangle \widehat
                             \bb_k^*(\bb)\| },\\
 {\rm Log}_{\widehat \bb_k^*(\bb)}
  \bb &= d(   \bb_k^*(\bb), \bb) \frac{\bb - \langle \bb, \widehat
                             \bb_k^*(\bb)\rangle \widehat
                             \bb_k^*(\bb)}{ \|\bb - \langle \bb, \widehat
                             \bb_k^*(\bb)\rangle \widehat
                             \bb_k^*(\bb)\|}.
\end{align*}

We
will only need to consider  $\bb$ so that if  $\bb^*=\widehat
\bb_k^*(\bb)$ then   $d(\bb^*,\bb)< \delta$ for some small
$\delta>0$. Then there
exists a $t \in [0,1]$ so that we can express $\bb$ in terms of its projection onto $\bb^*$ and the orthogonal component:
$$\bb=  (1-t)\bb^*_k - \sqrt{2t - t^2}
{\bf \eta}$$ 
with $\langle {\bf \eta}, \bb^*_k  \rangle= 0$, $\|{\bf
  \eta}\|=1$ and $\cos( d(\bb^*_k,{\bf y}))  =  1-t$. Then we have
that  $d(\bb^*_k,{\bf y}) \approx \sqrt{2t} $.  Hence, these coordinate systems are approximately the same in a small neighborhood of $\bb^*$.  As in Corollaries \ref{rem:contration}, \ref{c:GareTheSame}, we can then state the following.

\begin{lem}\label{lem:intrisicGamma}
  If, in analogy to before, we define the measures 
  \[
  \ugamma_{\beta,m}^N(\dbb) \eqdef e^{-\beta
    \GG(\bb)}\Gamma_m(\dbb), \] 
    \[ \widehat \ugamma_{\beta,m}^N(\dbb)\eqdef
  e^{-\beta \widetilde \GG(\bb)} \Gamma_m(\dbb), \] 
  and
  \[ \widehat\ugamma_{\beta,m}^N(\dbb) \eqdef  \sum_{k={-N+1}}^{N-1}
  e^{- \beta \GG_{k}(\bb)}  \Gamma_m(\dbb),\] 
  then for any $A \subset S(m)$
 \begin{align}\label{eq:diff_ugamma}
\lim_{\beta \rightarrow \infty}\frac{  \ugamma_{\beta,m}^N(A)}{
   \gamma_{\beta,m}^N(A)}=
   \lim_{\beta \rightarrow \infty}\frac{ \widehat \ugamma_{\beta,m}^N(A)}{\widehat \gamma_{\beta,m}^N(A)}=\lim_{\beta \rightarrow \infty}   
    \frac{ \widetilde \ugamma_{\beta,m}^N(A)}{ \widetilde \gamma_{\beta,m}^N(A)}
 = 1 .
  \end{align}
\end{lem}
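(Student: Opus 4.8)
The plan is to reduce the claim about the intrinsic-distance measures $\ugamma$ to the already-established comparison results for the extrinsic measures (\Cref{rem:contration} and \Cref{c:GareTheSame}) by showing that on each small tube $B_k^{*\epsilon}(m)$ the integrand $e^{-\beta\GG_k}$ differs from $e^{-\beta G_k}$ only by a multiplicative factor that is $1+O(|\bb-\bb^*_k|^2)$ uniformly, and that the total mass is carried (up to $e^{-c\beta}$ errors) by these tubes. Concretely, I would proceed as follows.

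\textbf{Step 1: Localize.} By the concentration statement in \Cref{rem:contration} applied to $\widehat\gamma^N_{\beta,m}$ (and its analogue for $\widehat\ugamma^N_{\beta,m}$, proved identically since $\GG_k$ is also a nondegenerate quadratic form on the normal directions with the bound \eqref{eqn:GopBound}), for any $\delta>0$ the fraction of mass of each of $\gamma,\widehat\gamma,\widetilde\gamma$ and of $\ugamma,\widehat\ugamma,\widetilde\ugamma$ lying outside $B^{*\delta}(m)=\bigcup_k B_k^{*\delta}(m)$ is $O(e^{-c\beta})$. Also, on $B_k^{*\delta}$ for $\delta$ small the non-uniqueness of $\widehat\bb^*$ does not occur and $G=G_k$, $\GG=\GG_k$, $\widehat\bb^*=\widehat\bb_k^*$. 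Hence it suffices to compare $\int_{A\cap B_k^{*\delta}} e^{-\beta\GG_k}\Gamma_m$ with $\int_{A\cap B_k^{*\delta}} e^{-\beta G_k}\Gamma_m$ for each $k$.

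\textbf{Step 2: Compare the two quadratic forms pointwise.} Using the coordinates introduced just before the lemma, write $\bb=(1-t)\bb^*_k-\sqrt{2t-t^2}\,\bm{\eta}$ with $\langle\bm{\eta},\bb^*_k\rangle=0$, $\|\bm\eta\|=1$, $t\in[0,\delta']$. Then $\Proj^\perp_{\bb^*_k}(\bb)=\sqrt{2t-t^2}\,\bm\eta$ while ${\rm Log}_{\widehat\bb^*_k(\bb)}\bb=\sqrt{m}\arccos(1-t)\,\bm\eta$, so both are positive scalar multiples of the same unit vector $\bm\eta$; their norms are $\sqrt{2t-t^2}=\sqrt{t}\,(1+O(t))$ and $\sqrt{m}\,\sqrt{2t}\,(1+O(t))$ respectively. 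Since $G_k(\bb)=\tfrac12 Q_k(\bb)\,\|\Proj^\perp\|^2$ and $\GG_k(\bb)=\tfrac12 Q_k(\bb)\,\|{\rm Log}\|^2$ where $Q_k(\bb)=\langle(\tfrac{14}{11}M(\bb)I+\nabla^2H(\widehat\bb^*_k(\bb)))\bm\eta,\bm\eta\rangle$ is the \emph{same} scalar, we get $\GG_k(\bb)=r(\bb)^2\,G_k(\bb)$ with $r(\bb)=\|{\rm Log}\|/\|\Proj^\perp\|=1+O(t)=1+O(|\bb-\bb^*_k|)$ on the tube (here I use $M(\bb)=m$ on $S(m)$, so $Q_k$ is bounded and bounded below by $\tfrac{2m}{11}$ on the relevant $\bm\eta$). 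Therefore $|\GG_k(\bb)-G_k(\bb)|\le C\,|\bb-\bb^*_k|\,G_k(\bb)\le C'|\bb-\bb^*_k|^3$ on $B_k^{*\delta}$.

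\textbf{Step 3: Conclude.} Split $B_k^{*\delta}$ further into the region $\{|\bb-\bb^*_k|\le\beta^{-1/2+\kappa}\}$ for a small $\kappa\in(0,1/6)$ and its complement. On the complement, $G_k(\bb)\ge c\beta^{-1+2\kappa}$, so $e^{-\beta G_k}\le e^{-c\beta^{2\kappa}}$ and this piece contributes a negligible fraction (of order $e^{-c\beta^{2\kappa}}$ after dividing by the $\Theta(\beta^{-N})$ normalization) to \emph{both} measures. On the inner region, $|\beta(\GG_k-G_k)|\le C\beta\cdot\beta^{3(-1/2+\kappa)}=C\beta^{-1/2+3\kappa}\to 0$, so $e^{-\beta\GG_k(\bb)}=e^{-\beta G_k(\bb)}(1+o(1))$ uniformly there. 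Integrating over $A\cap(\text{inner region})$ against $\Gamma_m$ and combining with the negligible outer/complement pieces gives $\widehat\ugamma^N_{\beta,m}(A)/\widehat\gamma^N_{\beta,m}(A)\to 1$; the identities for the $\GG$ and $\widetilde\GG$ versions follow the same way, and then chaining with \eqref{eq:diff_gamma} from \Cref{c:GareTheSame} (which already identifies the limits of $\gamma,\widehat\gamma,\widetilde\gamma$ with one another) yields \eqref{eq:diff_ugamma}.

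\textbf{Main obstacle.} The delicate point is the quantitative control in Step 2–3: one must verify that the scalar $Q_k(\bb)$ is genuinely identical in the two definitions (both $\Proj^\perp$ and ${\rm Log}$ point along the same $\bm\eta$), so that the discrepancy is purely the ratio of radial lengths $\arccos(1-t)\sqrt{m}$ versus $\sqrt{2t-t^2}$ — a clean $1+O(t)$ — rather than a genuine change of the quadratic form; and then to choose the window $\beta^{-1/2+\kappa}$ so that $\beta\times(\text{cubic error})\to0$ while the mass outside the window is exponentially small. Once the geometry of the log-map in these extrinsic coordinates is pinned down (which the excerpt essentially already does), the rest is bookkeeping with Laplace-type asymptotics identical to those underlying \Cref{rem:contration} and \Cref{c:GareTheSame}.
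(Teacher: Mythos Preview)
Your proposal is correct and follows the same approach as the paper: reduce to a neighborhood of the minimizers via the concentration results of \Cref{rem:contration} and \Cref{c:GareTheSame}, then use the fact that $\Proj_{\bb_k^*}^\perp(\bb)$ and ${\rm Log}_{\widehat\bb_k^*(\bb)}\bb$ point along the same unit vector with lengths $\sqrt{2t-t^2}$ and $\arccos(1-t)$, both $\sqrt{2t}(1+O(t))$. The paper in fact does not give a formal proof of this lemma at all; it simply records the observation $d(\bb_k^*,\bb)\approx\sqrt{2t}$, notes that ``these coordinate systems are approximately the same in a small neighborhood of $\bb^*$,'' and states the lemma ``as in Corollaries \ref{rem:contration}, \ref{c:GareTheSame}.'' Your Steps~1--3 are precisely the details being suppressed there.
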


\section{Invariance under Phase Rotation}\label{sec:rotinv}

We now prove a general result that implies \Cref{lem:avg} from the
introduction when combined with fact that $H(e^{i\theta} \bb) =
H(\bb)$ for any $\theta \in [0,2\pi]$ and $\bb \in \C^{2N+1}$.
\begin{proposition}\label{prop:avgGeneral}
  Let $F\colon \C^{2N+1} \rightarrow \R$ satisfy the phase
  invariance property $F(e^{i\theta}\bb)= F(\bb)$ for any $\theta \in
  [0,2\pi]$. Then for any $\phi\colon \C^{2N+1}
  \rightarrow \R$ so that $\int \phi(\bb) e^{-F(\bb)} \Gamma_m(\dbb)
< \infty$, we have
\begin{align*}
  \int \phi(\bb) e^{-F(\bb)} \Gamma_m(\dbb) = \int \overline{\phi}(\bb) e^{-F(\bb)} \Gamma_m(\dbb),
\end{align*}
where
\begin{align*}
   \overline{\phi} (\bb) = \frac1{2\pi} \int_0 ^{2\pi} \phi(e^{i\theta}\bb)  \Gamma_m(\dbb).
\end{align*}
\end{proposition}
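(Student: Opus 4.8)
The plan is to exploit the fact that the rotation $\bb \mapsto e^{i\theta}\bb$ is, for each fixed $\theta$, an isometry of $\C^{2N+1} \approx \R^{4N+2}$ that fixes the sphere $S(m)$, and hence preserves the surface measure $\Gamma_m$. First I would record this invariance: writing $R_\theta \bb = e^{i\theta}\bb$, the map $R_\theta$ is linear, satisfies $|R_\theta \bb| = |\bb|$, so $R_\theta(S(m)) = S(m)$, and as an orthogonal transformation of $\R^{4N+2}$ its Jacobian is $1$; therefore the pushforward $(R_\theta)_*\Gamma_m = \Gamma_m$. Equivalently, for any integrable $\psi\colon S(m)\to\R$ and any $\theta$,
\begin{align*}
  \int_{S(m)} \psi(e^{i\theta}\bb)\,\Gamma_m(\dbb) = \int_{S(m)} \psi(\bb)\,\Gamma_m(\dbb).
\end{align*}

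Next I would apply this to $\psi(\bb) = \phi(\bb)e^{-F(\bb)}$. Using the phase invariance hypothesis $F(e^{i\theta}\bb) = F(\bb)$, we get $\psi(e^{i\theta}\bb) = \phi(e^{i\theta}\bb)e^{-F(\bb)}$, so for every $\theta \in [0,2\pi]$,
\begin{align*}
  \int_{S(m)} \phi(\bb)e^{-F(\bb)}\,\Gamma_m(\dbb) = \int_{S(m)} \phi(e^{i\theta}\bb)e^{-F(\bb)}\,\Gamma_m(\dbb).
\end{align*}
Now average both sides over $\theta \in [0,2\pi]$ against $\frac{1}{2\pi}d\theta$. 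The left-hand side is unchanged, while on the right-hand side Tonelli's theorem (justified by the integrability hypothesis $\int |\phi(\bb)| e^{-F(\bb)}\Gamma_m(\dbb) < \infty$ together with the fact that $e^{-F}$ is $R_\theta$-invariant, so $\int_0^{2\pi}\int_{S(m)} |\phi(e^{i\theta}\bb)| e^{-F(\bb)}\Gamma_m(\dbb)\,d\theta < \infty$) lets us exchange the order of integration to obtain $\int_{S(m)} \overline{\phi}(\bb) e^{-F(\bb)}\,\Gamma_m(\dbb)$, where $\overline{\phi}(\bb) = \frac{1}{2\pi}\int_0^{2\pi}\phi(e^{i\theta}\bb)\,d\theta$. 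This is exactly the claimed identity.

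There is really no serious obstacle here; the only points requiring care are the measure-theoretic bookkeeping — confirming that $R_\theta$ preserves $\Gamma_m$ (which follows from $\Gamma_m$ being the Riemannian/Hausdorff measure on $S(m)$ and $R_\theta$ being an ambient isometry fixing $S(m)$) and checking the integrability needed to invoke Tonelli when swapping $\int_0^{2\pi}d\theta$ with $\int_{S(m)}\Gamma_m(\dbb)$. I would also note in passing that the stray ``$\Gamma_m(\dbb)$'' appearing inside the definition of $\overline{\phi}$ in the statement is a typo for ``$d\theta$''; the intended averaging operator is the one already used in \eqref{eq:average}, and Proposition \ref{lem:avg} then follows immediately by taking $F = \beta H$ and invoking $H(e^{i\theta}\bb) = H(\bb)$.
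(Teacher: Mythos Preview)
Your proposal is correct and follows essentially the same approach as the paper: both arguments use that the rotation $R_\theta$ preserves $\Gamma_m$ (since it is an ambient isometry fixing $S(m)$) together with the assumed invariance $F(R_\theta\bb)=F(\bb)$ to obtain the $\theta$-independent identity, and then average in $\theta$. You add slightly more care with the Tonelli justification and correctly flag the typo in the statement (the ``$\Gamma_m(\dbb)$'' inside the definition of $\overline{\phi}$ should be ``$d\theta$'').
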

\begin{proof} Defining the rotation $R_\theta(\bb)=(e^{i\theta}\bb)$,
  the fact that  $F(R_\theta\bb) =  F(\bb)$ follows directly from 
  \eqref{e:hamiltonian}. Similarly,
  $\Gamma_mR^{-1}_\theta(d\bb)=\Gamma_m(d\bb)$ follows from that fact
  that $R_\theta(S(m))=S(m)$. Here the push-forward $\Gamma_mR_\theta$
  of the measure $\Gamma_m$ is defined by
  $\Gamma_mR_\theta(A)=\Gamma_m(R_\theta(A))$ for any set $A
  \subset S(M)$. These two invariants imply that
  \begin{align*}
    \int_{S(m)} \phi(\bb) e^{-
   \beta  F(\bb)  } \Gamma_m(\dbb)&= \int_{S(m)} \phi(\bb) e^{-
                                     F(R_\theta(\bb))  } \Gamma_mR_\theta(\dbb)\\
   & = \int_{S(m)} \phi(R_{-\theta}(\bb)) e^{-
                                    F(\bb)  } \Gamma_m(\dbb).
  \end{align*}
  Integrating the initial expression on the left hand side and the
  final expression on the right hand side in $\theta$ from 0 to $2\pi$
  and then dividing by $2\pi$ leaves the   left hand side unchanged
  and produces the claimed average expression on the right hand side.
\end{proof}

\begin{lem}\label{lem:RotDerivative}
  The for all $\theta \in [0,2 \pi]$, $\bb, \bxi \in \C^{2N+1}$, we have 
  \begin{align*}
     & H(e^{i \theta}
  \bb)=H(\bb), \ \ \nabla H(e^{i\theta}\bb)[e^{i\theta}
  \xi]=\nabla H(\bb)[\bxi], \\
  & \nabla^2
  H(e^{i\theta}\bb)[e^{i\theta} \bxi, e^{i\theta}
  \bxi]=\nabla^2 H(\bb)[\bxi, \bxi], \\
  &   \nabla^3
  H(e^{i\theta}\bb)[e^{i\theta} \bxi, e^{i\theta}
  \bxi,  e^{i\theta} \bxi]=\nabla^3 H(\bb)[\bxi, \bxi,\bxi], \\
  & \nabla^4
  H(e^{i\theta}\bb)[e^{i\theta} \bxi, e^{i\theta}
  \bxi,  e^{i\theta} \bxi, e^{i\theta} \bxi]=\nabla^4 H(\bb)[\bxi,\bxi, \bxi,\bxi].
  \end{align*}
\end{lem}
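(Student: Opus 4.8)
The plan is to reduce all five identities to the single scalar statement $H(e^{i\theta}\bb)=H(\bb)$, which is immediate from the explicit formula \eqref{e:hamiltonian}: each term $\tfrac12|b_j|^4$ is manifestly invariant under $b_j\mapsto e^{i\theta}b_j$, while each cross term transforms as $\Re\big(\overline{e^{i\theta}b_j}{}^{\,2}(e^{i\theta}b_{j-1})^2\big)=\Re\big(e^{-2i\theta}e^{2i\theta}\bar b_j^2 b_{j-1}^2\big)=\Re(\bar b_j^2 b_{j-1}^2)$. Writing $R_\theta\colon\C^{2N+1}\to\C^{2N+1}$ for the map $R_\theta\bb=e^{i\theta}\bb$, this says $H\circ R_\theta=H$.

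Next I would record the two structural facts that upgrade this to the derivative identities. First, $R_\theta$ is $\R$-linear on $\C^{2N+1}\cong\R^{4N+2}$ (in fact an isometry for the real inner product $\langle\cdot,\cdot\rangle$ used throughout), so that for a linear map the chain rule gives $\nabla^k(H\circ R_\theta)(\bb)[\bxi,\dots,\bxi]=\nabla^kH(R_\theta\bb)[R_\theta\bxi,\dots,R_\theta\bxi]$, where $R_\theta\bxi$ is exactly what is written as $e^{i\theta}\bxi$. Second, $H$ is a real polynomial of degree $4$ in the real coordinates $(\alpha_j,\beta_j)$, so its Taylor expansion about any point terminates:
\begin{equation*}
H(\bb+t\bxi)=\sum_{k=0}^{4}\frac{t^{k}}{k!}\,\nabla^kH(\bb)[\bxi,\dots,\bxi],\qquad t\in\R .
\end{equation*}
I would then substitute the pair $(\bb,\bxi)$ and the pair $(R_\theta\bb,R_\theta\bxi)$ into this exact polynomial identity in $t$: by linearity of $R_\theta$ the left-hand sides are $H(\bb+t\bxi)$ and $H\big(R_\theta(\bb+t\bxi)\big)=H(\bb+t\bxi)$ respectively, hence equal for every $t$. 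Matching coefficients of $t^k$ for $k=0,1,2,3,4$ yields
\[
\nabla^kH(e^{i\theta}\bb)[e^{i\theta}\bxi,\dots,e^{i\theta}\bxi]=\nabla^kH(\bb)[\bxi,\dots,\bxi],
\]
which is precisely the list in the statement (the degree-$4$ bound also confirms there is nothing to check for higher derivatives). An alternative route is to differentiate $H\big(e^{i\theta}(\bb+s\bxi)\big)=H(\bb+s\bxi)$ in $s$ at $s=0$ and iterate $k$ times, but the polynomial bookkeeping above avoids repeated appeals to the chain rule.

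I do not expect a genuine analytic obstacle here; the only point needing care is conceptual rather than computational. One must read $\nabla^kH$ as the full real $k$-th Fr\'echet derivative on $\R^{4N+2}$ (not a Wirtinger/complex derivative) and note that $e^{i\theta}\bxi$ denotes the image of the perturbation $\bxi$ under the same real-linear rotation $R_\theta$, so that the chain-rule step applies verbatim. Once this is fixed, the lemma is a short consequence of $H\circ R_\theta=H$ together with the finiteness of the Taylor series.
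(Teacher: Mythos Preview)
Your proposal is correct and is essentially the same argument as the paper's: both exploit that $H$ is a degree-four polynomial so the Taylor expansion $H(\bb+\lambda\bxi)=\sum_{k=0}^4\tfrac{\lambda^k}{k!}\nabla^kH(\bb)[\bxi,\dots,\bxi]$ is exact, then equate coefficients of $\lambda^k$ on both sides of the identity $H(e^{i\theta}\bb+\lambda e^{i\theta}\bxi)=H(\bb+\lambda\bxi)$. Your write-up is in fact a bit more careful than the paper's (you spell out why $H\circ R_\theta=H$ from \eqref{e:hamiltonian} and flag the real-Fr\'echet interpretation of $\nabla^kH$), but the method is identical.
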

\begin{proof}
  Since $H$ is a 4th order polynomial 
  for any $\lambda \in \R$, $\bb, \xi \in \C^{2N+1}$, we have the following equality
  \begin{multline}\label{lem:Hexpansion}
    H[\bb + \lambda \bxi ]=  H(\bb ) +\lambda \nabla H(\bb)[\bxi]
    +\lambda^2 \nabla H(\bb)[\bxi, \bxi] \\+ \lambda^3 \nabla^3 H(\bb)[\bxi, \bxi,\bxi] + \lambda^4 \nabla^4 H(\bb)[\bxi, \bxi,\bxi,\bxi] \,.
  \end{multline}
We begin by observing that  $H[e^{i\theta}\bb + \lambda e^{i\theta}\bxi ]=  H[\bb + \lambda
\bxi ]$. Applying \eqref{lem:Hexpansion}  to both sides of this
equality and equating powers of $\lambda$ produces the quoted  result.
\end{proof}

\begin{lem}\label{l:prop_inv}
  For all $\theta \in [0,2\pi]$, $\bb \in \C^{2N+1}$, $k \in [-N+1,N-1]$, we have
  $\widehat \bb^*(\bb) =e^{-i \theta}  \widehat \bb^*(e^{i \theta}
  \bb)$, $\widehat \bb^*_k(\bb) = e^{-i \theta} \widehat \bb^*_k(e^{i
    \theta} \bb)$, as well as
    \begin{align*}
      &   \Proj_{\bb^*}(\bb) 
  =e^{-i\theta}\Proj_{\bb^*}(e^{i\theta}\bb), \ \Proj_{\bb^*_k}(\bb)
  =e^{-i\theta}\Proj_{\bb^*_k}(e^{i\theta}\bb), \\
  & \Proj_{\bb^*}^\perp(\bb) 
  =e^{-i\theta}\Proj_{\bb^*}^\perp(e^{i\theta}\bb), \Proj_{\bb^*_k}^\perp(\bb) =e^{-i\theta}\Proj_{\bb^*_k}^\perp(e^{i\theta}\bb).
    \end{align*}
\end{lem}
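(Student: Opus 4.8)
The plan is to reduce all six identities to two elementary observations and then simply push the phase rotation $R_\theta\colon\bb\mapsto e^{i\theta}\bb$ through the definitions. The first observation is that $R_\theta$ is a real orthogonal transformation of $\C^{2N+1}\cong\R^{4N+2}$ — it acts as a rotation by $\theta$ in each coordinate plane $\{\alpha_j,\beta_j\}$ — so it preserves both the inner product $\langle\cdot,\cdot\rangle$ and the norm $|\cdot|$. The second is that the minimizer sets are phase-invariant: $e^{i\theta}B_k^*(m)=B_k^*(m)$ for every $\theta$ and $k$, and therefore $e^{i\theta}B^*(m)=B^*(m)$. The latter is immediate from \Cref{rem:b^* notation}, since $B_k^*(m)=\{e^{i\psi}\bb_k(m):\psi\in[0,2\pi]\}$ is already a full circle of phase rotations of a single point, so $R_\theta$ merely reparametrizes it; compactness of $B^*(m)$ ensures the $\argmin$ is attained.

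Granting these, I would first establish the covariance of $\widehat\bb^*$ by a relabelling in the $\argmin$. For $\bb$ outside the exceptional set of \Cref{rem:non-unique_distance}, substituting $\bb^*=e^{i\theta}\bbz^*$ and using $e^{-i\theta}B^*(m)=B^*(m)$ together with $|e^{i\theta}(\bb-\bbz^*)|=|\bb-\bbz^*|$ gives
\begin{align*}
  \widehat\bb^*(e^{i\theta}\bb)
  &=\argmin_{\bb^*\in B^*(m)}\big|e^{i\theta}\bb-\bb^*\big| \\
  &=e^{i\theta}\argmin_{\bbz^*\in B^*(m)}\big|\bb-\bbz^*\big|
  =e^{i\theta}\,\widehat\bb^*(\bb),
\end{align*}
which rearranges to $\widehat\bb^*(\bb)=e^{-i\theta}\widehat\bb^*(e^{i\theta}\bb)$; the identical computation with $B_k^*(m)$ in place of $B^*(m)$ gives the statement for $\widehat\bb^*_k$. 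For the projections, set $\bb^*=\widehat\bb^*(\bb)$; the step just proved together with norm-invariance gives $\tfrac{\widehat\bb^*(e^{i\theta}\bb)}{|\widehat\bb^*(e^{i\theta}\bb)|}=e^{i\theta}\tfrac{\bb^*}{|\bb^*|}$, and then, using $\C$-linearity of scalar multiplication and invariance of $\langle\cdot,\cdot\rangle$ under $R_\theta$,
\begin{align*}
  \Proj_{\bb^*}(e^{i\theta}\bb)
  &=\Big\langle e^{i\theta}\bb,\,e^{i\theta}\tfrac{\bb^*}{|\bb^*|}\Big\rangle\,e^{i\theta}\tfrac{\bb^*}{|\bb^*|} \\
  &=e^{i\theta}\Big(\big\langle\bb,\tfrac{\bb^*}{|\bb^*|}\big\rangle\,\tfrac{\bb^*}{|\bb^*|}\Big)
  =e^{i\theta}\Proj_{\bb^*}(\bb).
\end{align*}
The perpendicular component then transforms correctly since $\Proj^\perp_{\bb^*}(\bb)=\bb-\Proj_{\bb^*}(\bb)$ and $e^{-i\theta}(e^{i\theta}\bb)=\bb$, and the four $k$-indexed statements are proved verbatim with $\widehat\bb^*_k,\Proj_{\bb^*_k},\Proj^\perp_{\bb^*_k}$ in place of $\widehat\bb^*,\Proj_{\bb^*},\Proj^\perp_{\bb^*}$.

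The only point requiring any care — and the nearest thing to an obstacle — is the non-uniqueness of the $\argmin$ defining $\widehat\bb^*$ and $\widehat\bb^*_k$. I would dispatch this by observing that the exceptional set of \Cref{rem:non-unique_distance} is itself $R_\theta$-invariant: it is cut out by equations of the form $\Re\langle\bbz,\bb\rangle=0$, which are preserved when $\bbz$ is rotated together with $\bb$. Hence one may either restrict to its complement or fix once and for all a phase-equivariant measurable selection, and in either case the six identities hold Lebesgue-almost everywhere on $\C^{2N+1}$ and on $S(m)$ — which is exactly the sense in which they are invoked later. Beyond this bookkeeping the argument is entirely routine.
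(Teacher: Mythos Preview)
Your proof is correct and follows essentially the same approach as the paper: both arguments use the phase-invariance $e^{i\theta}B_k^*=B_k^*$ to relabel the $\argmin$, then push the rotation through the projection formula using $\langle e^{i\theta}\bb,e^{i\theta}\bb^*\rangle=\langle\bb,\bb^*\rangle$, and obtain $\Proj^\perp$ by subtraction. Your treatment is in fact slightly more careful than the paper's, since you explicitly address the measure-zero exceptional set where the $\argmin$ is non-unique.
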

\begin{proof}
  First observe that $B_k^*=e^{i\theta}B_k^*\eqdef\{ e^{i\theta} \bb:
  \bb \in B_k^* \}$ for all $\theta \in [0,2\pi]$ and
  $k=-N+1,\dots,N-1$. Hence
  \begin{align*}
    \widehat \bb^*(e^{i\theta}\bb) &= \argmin_{\bb \in B^*} | e^{i
    \theta} \bb - \bb| \\ &= \argmin_{\bb \in B^*} | e^{i
    \theta} \bb - e^{i \theta} \bb|=  e^{i\theta} \widehat \bb^*(\bb) .
  \end{align*}
  A completely analogous calculation shows that $\widehat
  \bb^*_k(e^{i\theta}\bb)= e^{i\theta} \widehat \bb^*_k(\bb)$ for all
  $k=-N+1,\dots,N-1$. Next, we see that
  \begin{align*}
    \Proj_{\bb^*}(e^{i\theta}\bb) &= \big \langle e^{i\theta}\bb,  \tfrac{\widehat
                                    \bb^*(e^{i\theta}\bb)}{|\widehat
                                    \bb^*(e^{i\theta}\bb)|} \big
                                    \rangle \tfrac{\widehat
                                    \bb^*(e^{i\theta}\bb)}{|\widehat
                                    \bb^*(e^{i\theta}\bb)|}\\
    &=\big \langle e^{i\theta}\bb,   e^{i\theta} \tfrac{\widehat
                                  \bb^*(\bb)}{|\widehat
                                    \bb^*(\bb)|} \big
                                    \rangle e^{i\theta}\tfrac{\widehat
                                    \bb^*(\bb)}{|\widehat
      \bb^*(\bb)|}= e^{i\theta} \Proj_{\bb^*}(\bb) .
  \end{align*}
  This in turns implies that 
  \[
  \Proj_{\bb^*}^\perp(e^{i\theta}\bb)
  =e^{i\theta}\bb- \Proj_{\bb^*}(e^{i\theta}\bb)=
  e^{i\theta}\Proj_{\bb^*}^\perp(\bb)
  \]
  and
\[
\Proj_{\bb^*_k}^\perp(e^{i\theta}\bb)=e^{i\theta}\Proj_{\bb^*_k}^\perp(\bb)
\]
  again for $k=-N+1,\dots,N-1$.
\end{proof}

\begin{corollary}\label{c:Ginvariance}
  For every $\bb \in \C^{2N+1}$, $\theta \in [0,2 \pi]$, and
  $k=-N+1,\dots,N-1$, we have  $G(e^{i\theta} \bb)= G(\bb)$ and
  $G_k(e^{i\theta} \bb)= G_k(\bb)$. Additionally for any compactly
  supported, bounded  $\phi\colon \C^{2N+1}
  \rightarrow \R$, we have
\begin{align*}
 \gamma_{\beta,m}^N\phi  = \gamma_{\beta,m}^N 
  \overline{\phi}, \quad \widetilde \gamma_{\beta,m}^N\phi  =
  \widetilde \gamma_{\beta,m}^N 
  \overline{\phi},
  \quad\text{and}\quad
  \widehat \gamma_{\beta,m}^N\phi  = \widehat \gamma_{\beta,m}^N 
                                              \overline{\phi}\\
\end{align*}
where
\begin{align*}
   \overline{\phi} (\bb) = \frac1{2\pi} \int_0 ^{2\pi} \phi(e^{i\theta}\bb)  \Gamma_m(\dbb).
\end{align*}
\end{corollary}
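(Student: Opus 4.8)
The plan is to reduce both assertions to results already in hand: \Cref{l:prop_inv} (phase equivariance of $\widehat\bb^*$, $\widehat\bb_k^*$ and the associated projections), \Cref{lem:RotDerivative} (invariance of $\nabla^2 H$ under the simultaneous rotation of base point and arguments), and \Cref{prop:avgGeneral} (the phase-averaging identity). First I would verify $G(e^{i\theta}\bb)=G(\bb)$ directly from \eqref{eq:G}. Substituting $\widehat\bb^*(e^{i\theta}\bb)=e^{i\theta}\widehat\bb^*(\bb)$ and $\Proj_{\bb^*}^\perp(e^{i\theta}\bb)=e^{i\theta}\Proj_{\bb^*}^\perp(\bb)$ from \Cref{l:prop_inv}, together with $M(e^{i\theta}\bb)=M(\bb)$, reduces the claim to the two identities $\langle e^{i\theta}{\bf v},e^{i\theta}{\bf w}\rangle=\langle{\bf v},{\bf w}\rangle$, which holds because multiplication by $e^{i\theta}$ acts as an orthogonal transformation on $\C^{2N+1}\cong\R^{4N+2}$, and $\langle\nabla^2 H(e^{i\theta}\bb^*)\,e^{i\theta}{\bf v},e^{i\theta}{\bf v}\rangle=\langle\nabla^2 H(\bb^*)\,{\bf v},{\bf v}\rangle$, which is precisely the second line of \Cref{lem:RotDerivative} once one identifies the quadratic form $\nabla^2 H(\cdot)[\cdot,\cdot]$ with the symmetric operator $\nabla^2 H(\cdot)$ through the real inner product $\langle{\bf u},{\bf v}\rangle=\Re({\bf u}\cdot{\bf v})$. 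Running the identical computation with $\widehat\bb_k^*$ in place of $\widehat\bb^*$ yields $G_k(e^{i\theta}\bb)=G_k(\bb)$, and summing over $k$ gives $\widetilde G(e^{i\theta}\bb)=\widetilde G(\bb)$ as well.

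With the invariance of $G$, $G_k$ and $\widetilde G$ established, I would then apply \Cref{prop:avgGeneral} with $F=\beta G$. The integrability hypothesis $\int\phi(\bb)e^{-\beta G(\bb)}\Gamma_m(\dbb)<\infty$ is immediate since $G\ge 0$ (hence $e^{-\beta G}\le 1$), $\phi$ is bounded and compactly supported, and $\Gamma_m$ is a finite measure; \Cref{prop:avgGeneral} then gives $\gamma_{\beta,m}^N\phi=\gamma_{\beta,m}^N\overline\phi$. The same argument verbatim, with $F=\beta\widetilde G$, handles $\widetilde\gamma_{\beta,m}^N$. For $\widehat\gamma_{\beta,m}^N$ one writes it from \eqref{eq:gamma-hat} as the finite sum $\sum_{k=-N+1}^{N-1}\int\phi(\bb)e^{-\beta G_k(\bb)}\Gamma_m(\dbb)$, applies \Cref{prop:avgGeneral} to each summand with $F=\beta G_k$, and uses linearity of the averaging map $\phi\mapsto\overline\phi$ to recombine the terms into $\widehat\gamma_{\beta,m}^N\overline\phi$.

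The only point requiring a word of care is the Lebesgue-null exceptional set on which $\widehat\bb^*$ (resp.\ $\widehat\bb_k^*$) fails to be single-valued. This set is itself invariant under $\bb\mapsto e^{i\theta}\bb$, so one may either fix an equivariant measurable selection or, as in \Cref{rem:non-unique_distance}, simply observe that it carries no $\Gamma_m$-mass and hence affects none of the integrals above. I do not anticipate a genuine obstacle here: the substantive content has already been carried by \Cref{l:prop_inv}, \Cref{lem:RotDerivative}, and \Cref{prop:avgGeneral}, and what remains is essentially bookkeeping.
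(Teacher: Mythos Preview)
Your proposal is correct and follows essentially the same route as the paper: substitute the equivariance relations from \Cref{l:prop_inv} and the Hessian invariance from \Cref{lem:RotDerivative} into the definition \eqref{eq:G} (resp.\ \eqref{eq:Gk}) to obtain $G(e^{i\theta}\bb)=G(\bb)$ and $G_k(e^{i\theta}\bb)=G_k(\bb)$, then invoke \Cref{prop:avgGeneral} for the averaging identities. Your treatment is in fact slightly more careful than the paper's, which handles the three measures in one sentence; your explicit splitting of $\widehat\gamma_{\beta,m}^N$ into its $k$-summands and your remark on the $\Gamma_m$-null exceptional set are both appropriate elaborations.
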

\begin{proof}
  Using the results from \Cref{l:prop_inv} and the definition of $G$
  from \cref{eq:G}, we have 
\begin{align*}
   & G(e^{i\theta}\bb)= \\
   & \hspace{.5cm} \tfrac12 \big\langle \big(\tfrac{14}{11}M(e^{i\theta}\bb) I
  +\nabla^2 H(\widehat \bb^*(e^{i\theta}\bb))\big)
                      \Proj_{\bb^*}^\perp(e^{i\theta}\bb),\Proj_{\bb^*}^\perp(e^{i\theta}\bb) \big\rangle \\
  & \hspace{.5cm}= \tfrac12 \big\langle \big(\tfrac{14}{11}M(\bb) I
  +\nabla^2 H(\widehat \bb^*(e^{i\theta}\bb))\big)
    e^{i\theta}\Proj_{\bb^*}^\perp(\bb), e^{i\theta}
    \Proj_{\bb^*}^\perp(\bb) \big\rangle \\
    & \hspace{.5cm}= \tfrac12 \big\langle \big(\tfrac{14}{11}M(\bb) I
  +\nabla^2 H(\widehat \bb^*(\bb))\big)
    \Proj_{\bb^*}^\perp(\bb), 
    \Proj_{\bb^*}^\perp(\bb) \big\rangle= G(\bb).
\end{align*}
A similar calculation shows that $G_k
(e^{i\theta}\bb)=G_k(\bb)$. The last two results about integration
then follow from \Cref{prop:avgGeneral}.
\end{proof}

\section{Fluctuations at Low Temperature}
\label{Sec:Meas}

Finally, we now return to the measure $\mu_{\beta,m}^N$ defined in
\eqref{mu-beta-m}. Though we work on the finite domain $\C^{2N+1}$,
the results are the same for arbitrarily large $N$.
We begin by defining a convenient set of local coordinates that will
facilitate calculations. Then, using those coordinates and the
Hessian calculations from Section~\ref{sec:Hessian}, we define the
quadratic form that will be used for our Gaussian approximation.

\subsection{Approximating the measure} 

For a test function $\phi\colon S(m) \rightarrow \R$, we wish to consider
\begin{align}
\label{Gibbs}
&  e^{-\beta h_*(m)}  \int_{S(m)} e^{-\beta [H(\bb)-h_*(m)]}\phi(\bb) \Gamma_m( \dbb).
\end{align}
For $j_0 = -N+1, \dots, 0, \dots, N-1$ and $\theta_{k_0} = k_0 \pi/2$
for $k_0 = 0,1,2,3$, we decompose into a partition of unity centered
around the set of optimizers, which consists of phase rotated three mode solutions centered upon one of the lattice sites.  We will analyze the measure in a neighborhood around one of these designated fixed points, which we will without loss of generality take simply the real optimizer centered at $0$, i.e. $\bb_0^*$.  Recall that $h_*(m) = -\tfrac{7}{22} m^2$.
\begin{thm}
\label{thm:Gibbs_precise}
    For a bounded, smooth test function $\phi\colon S(m) \rightarrow \R$
    with compact support, we have 
        \begin{multline}\label{eq:conclusionCap}
      \frac{e^{\beta h_*(m)}}{\beta^{N}}     \mu_{\beta,m}^N\phi= \frac1{\beta^{N}}   
         \int_{S(m)} e^{-\beta (H(\bb)-h_*(m))} \phi(\bb) \Gamma_m( \dbb)  
         \\
    = \frac1{\beta^{N}}\widehat \gamma_{\beta,m}^N\phi + {\rm Error} (m,N,\beta,\phi) 
    \end{multline}
    for $\widehat \gamma_{\beta,m}^N$ defined in  \eqref{eq:gamma-hat}. Additionally, there exist  a constant $C(m,N) >0$,
    independent of $\phi$ and $\beta$, such that 
\begin{equation*}
   | {\rm Error} (m,N,\beta,\phi) | \leq  C(m,N)  \beta^{-1} \| \phi \|_{L^\infty (\Gamma_m)}. 
\end{equation*}
\end{thm}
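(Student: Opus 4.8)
The plan is to use a Laplace-type (steepest descent) analysis localized near the finite set of minimizers $B^*(m)$. First I would subordinate a smooth partition of unity $\{\chi_{k,\ell}\}$ on $S(m)$ to the cover of $B^*(m)$ by the neighborhoods $B_k^{*\epsilon}(m)$ (one piece per lattice center $k$, and, since $H$ and $G_k$ only depend on $\bb$ through its orbit under phase rotation, it is cleanest to absorb the phase-circle directly rather than partitioning it), plus one piece $\chi_0$ supported away from $B^*(m)$. On the support of $\chi_0$ we have $H(\bb)-h_*(m)\geq c_0>0$ by compactness and \Cref{mainthm}, so that contribution is $O(e^{-\beta c_0})$, which is absorbed into the error term after dividing by $\beta^{-N}$. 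Thus it suffices to analyze a single localized integral $\int_{B_k^{*\epsilon}(m)} e^{-\beta(H(\bb)-h_*(m))}\phi(\bb)\chi_{k}(\bb)\,\Gamma_m(\dbb)$ and compare it, with error $O(\beta^{-1})\|\phi\|_{L^\infty}$ after the $\beta^{-N}$ normalization, to $\beta^{-N}\int e^{-\beta G_k(\bb)}\phi(\bb)\,\Gamma_m(\dbb)$; summing over $k$ then gives $\widehat\gamma_{\beta,m}^N$.

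Second, I would set up adapted coordinates on $B_k^{*\epsilon}(m)\cap S(m)$: write $\bb = e^{i\varphi}\bb^*_k + $ (tangential fluctuation), splitting the $4N+1$-dimensional tangent directions to $S(m)$ at $\widehat\bb_k^*(\bb)$ into the phase direction $i\bb^*_k$ (along which $H$ is exactly constant by \Cref{lem:RotDerivative}/\Cref{prop:avgGeneral}) and the orthogonal complement. Using the Lagrange-multiplier identities \eqref{eqn:lagmult}--\eqref{eqn:LM}, Taylor-expand $H$ along $S(m)$ about $\bb^*_k$: the gradient of $H$ restricted to $S(m)$ vanishes there, and the second-order term is exactly $\tfrac12\langle(\tfrac{14}{11}M(\bb)I+\nabla^2H(\bb^*_k))\,\bxi,\bxi\rangle$ on the orthogonal fluctuation $\bxi = \Proj_{\bb_k^*}^\perp(\bb)$ — this is precisely $G_k(\bb)$ — because the mass constraint contributes the $\tfrac{14}{11}M(\bb)I$ shift (here one uses $\langle\nabla H(\bb^*_k),\bxi+\bar\bxi\rangle + |\bxi|^2$-type corrections to turn the constrained Hessian into the quoted operator). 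By \Cref{thm:Hessprops}, on the relevant subspace this operator is bounded below by $\tfrac{2m}{11}|\bxi|^2 > 0$ (see \eqref{eqn:GopBound}), so $G_k$ is a genuine nondegenerate quadratic form transverse to the phase circle, and $H(\bb)-h_*(m) = G_k(\bb) + R_k(\bb)$ with $|R_k(\bb)| \leq C|\bxi|^3$ by \Cref{lem:RotDerivative} (the cubic and quartic terms are uniformly controlled).

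Third comes the quantitative Laplace estimate. On the $G_k$-piece, rescale the transverse fluctuation by $\beta^{-1/2}$; the transverse integration is over $4N$ directions contributing a $\beta^{-2N}$ Jacobian... wait, more carefully: $2N-1$ "extra" modes plus the $5$-mode-neighborhood fluctuations transverse to the phase circle give $4N$ real transverse directions, but the correct count (cf. the statement's $\beta^{-N}$ normalization together with $\widehat\gamma_{\beta,m}^N(S(m))\sim \beta^N$ from \Cref{rem:contration}) means the effective Gaussian has $2N$ "wide" directions and the remaining directions are $O(1)$; the factor $\beta^{-N}$ in the statement is exactly calibrated so that $\beta^{-N}\widehat\gamma_{\beta,m}^N(S(m))$ has a finite nonzero limit. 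The standard comparison then gives: replacing $e^{-\beta(G_k+R_k)}$ by $e^{-\beta G_k}$ costs $\leq C\beta\,|\bxi|^3 e^{-\beta\,\tfrac{m}{11}|\bxi|^2}$ pointwise (using $|e^{-x}-1|\le |x|$ and $G_k\ge \tfrac{m}{11}|\bxi|^2$ to keep half the Gaussian), which upon transverse integration is $O(\beta^{-1})$ relative to the leading $e^{-\beta G_k}$ mass, uniformly in $\phi$ up to $\|\phi\|_{L^\infty(\Gamma_m)}$; similarly the partition-of-unity factor $\chi_k$ equals $1$ on a fixed smaller neighborhood and the region where $\chi_k\ne 1$ is at distance $\gtrsim\epsilon$, contributing $O(e^{-c\beta})$; and extending the transverse integral from $B_k^{*\epsilon}$ to all of $S(m)$ in the $G_k$-integral is again $O(e^{-c\beta})$ by \Cref{rem:contration}. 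Finally I would use \Cref{c:Ginvariance}/\Cref{prop:avgGeneral} to replace $\phi$ by its phase-average $\overline\phi$, which makes the phase-circle integration trivial and confirms the normalization constant matches $\widehat\gamma_{\beta,m}^N$ exactly, completing the identification and the $O(\beta^{-1})$ error bound with $C(m,N)$ independent of $\phi$ and $\beta$.

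The main obstacle will be the bookkeeping in the adapted coordinate system: making the change of variables from $\Gamma_m$ (Riemannian surface measure on $S(m)$) to the transverse-fluctuation coordinates $(\varphi,\bxi)$ with a Jacobian that is $1+O(|\bxi|)$ near $\bb^*_k$, and verifying that the curvature of $S(m)$ and the $M(\bb)$-dependence inside $G_k$ (versus the constant $\tfrac{14}{11}m I$ one might naively expect) only generate corrections of the claimed order $\beta^{-1}$ rather than spoiling the leading term — this is where \Cref{lem:intrisicGamma} and the equivalences in \Cref{c:GareTheSame} do the real work of showing all the reasonable ways of setting this up agree to the order needed.
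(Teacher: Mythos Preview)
Your proposal is correct and follows essentially the same route as the paper: a partition of unity into pieces supported near each circle $B_k^*$ plus a remainder piece, exponential smallness on the remainder via the strict energy gap, then on each cap a Taylor expansion of $H$ about $\bb_k^*$ in adapted coordinates where the Lagrange-multiplier identity \eqref{eqn:lagmult} converts the constrained Hessian into $\tfrac{14}{11}mI+\nabla^2H(\bb_k^*)$, yielding exactly $G_k$ as the leading term and a cubic--quartic remainder handled by the rescaling $\bxi\to\beta^{-1/2}\bxi$. The only cosmetic differences are that the paper first disintegrates each cap into slices $\SCap_k(\delta,\theta)$ at fixed phase (via \Cref{lem:Gibbs_precise_cap_slice}) and integrates in $\theta$ afterwards, whereas you absorb the phase circle by averaging $\phi$ up front; and the paper uses the explicit spherical-cap coordinates $\bb=(1-t)\bb^*+\sqrt{2t-t^2}\,Q\bo$ rather than your projection description, relating them through \eqref{eqn:GvsGk}.
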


\begin{rem}
We will demonstrate that the integral on the right and left hand side in \eqref{eq:conclusionCap} converge to non-zero, finite values for generic test function $\phi$
as $\beta \to \infty$, while the error term vanishes for all $\phi$ as $\beta \to \infty$.  We note that the constants involved depend upon $N$ and can indeed blow-up as $N \to \infty$. 
\end{rem}

\subsection{The Proof of Theorem \ref{thm:Gibbs_precise}}

As already discussed in \Cref{rem:contration}, as 
$\beta \rightarrow \infty$ we expect $\mu_{\beta,m}^N$ to concentrate 
inside of $S^\epsilon(m,0)$. To demonstrate this we need to decompose 
$\mu_{\beta,m}^N$ in local coordinates for $S(m)$ that are amenable to 
analysis.

For any $\delta \in (0,1)$, motivated by the Intrinsic Coordinates on the Sphere in Section \ref{sec:intrinsicG}, we define the $ \delta$-spherical cap about the circle of minimizers
$B_k^*=\{e^{i\theta}\bb_k: \theta \in [0,2\pi)\}$ by
\begin{align*}
  \SCap_k(\delta)  \eqdef \big\{ \bb \in S(m) : \sup_{\theta \in
  [0,2\pi]}\langle e^{i\theta}\bb_k^* , \bb\rangle > (1-\delta)\sqrt{m}\big\} .
\end{align*}
For $\delta$ sufficiently small, observe  that for any point $\bb \in
\SCap_k(\delta)$  there is a unique point $\hat \bb_k^*(\bb) \in B_k^*$ that
minimizes the distance between $\bb$ and $B_k^*$.
we will set $\hat \theta_k^*(\bb)=\arg \hat \bb_k^*(\bb)$. Henceforth,
we always assume that $\delta$ is chosen sufficiently small for this to
hold.

We then define the slice of $\SCap_k(\delta)$ at angle $\theta$ by and
\begin{align*}
  \SCap_k(\delta,\theta)   \eqdef \big\{ \bb \in  \SCap_k(\delta)  :
  \hat \theta_k^*(\bb)=\theta \big\}.
\end{align*}
We will denote by $\Upsilon_{m,\theta}(\dbb)$ the 
disintegration onto the partition 
\begin{align*}
 \{ \SCap_k(\delta,\theta)  \,|\, \theta \in [0,2\pi)\}
\end{align*}
of the restriction  of
$\Gamma_{m}(\dbb)$ to the open set $\SCap_k(\delta)$. The
normalization is chosen so that 
$\Gamma_{m}(\dbb)= \Upsilon_{m,\theta}(\dbb) d\theta$ on $\SCap_k(\delta)$.

\begin{lem}\label{lem:Gibbs_precise_cap}
 There exist a $\delta>0$ sufficiently small so that for any bounded, smooth 
  test function $\phi\colon S(m) \rightarrow \R$ with compact support 
  contained in $\SCap_k(\delta)$, we have 
        \begin{multline}\label{eq:conclusionCap1}
         \frac1{\beta^{N}}   
         \int_{\SCap_k(\delta)} e^{-\beta (H(\bb)-h_*(m))} \phi(\bb) \Gamma_m( \dbb)  
         \\
    =  \frac1{\beta^{N}}    \int_{\SCap_k(\delta)} e^{- \beta G_k(\bb)} \phi(\bb) \Gamma_m( \dbb)  + {\rm Error}  (m,\beta,N,\phi,k)
    \end{multline}
    for $G_k$ defined in \eqref{eq:G}. Additionally,  there exist 
    constants $C(m,N) >0$ independent of $\phi$, $k$ and $\beta$ such that 
\begin{equation*}
   | {\rm Error} (m,\beta,N,\phi,k) | \leq  C(m,N)  \beta^{-1} \| \phi \|_{L^\infty (\Gamma_m)}. 
\end{equation*}
\end{lem}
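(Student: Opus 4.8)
The plan is to work entirely inside a single spherical cap $\SCap_k(\delta)$ and Taylor-expand the exponent $H(\bb) - h_*(m)$ around its minimizing point $\widehat\bb_k^*(\bb) \in B_k^*$, showing that the quadratic part reproduces $G_k$ while all higher-order corrections contribute only an $O(\beta^{-1})$ error after integration. First I would set up local coordinates on $\SCap_k(\delta)$ adapted to the circle of minimizers: for $\bb \in \SCap_k(\delta)$, write $\widehat\bb_k^*(\bb) = e^{i\widehat\theta}\bb_k^*$ and decompose $\bb = e^{i\widehat\theta}(\,(1-t)\bb_k^* - \sqrt{2t-t^2}\,\bm\eta\,)$ with $\langle \bm\eta, \bb_k^*\rangle = 0$, $|\bm\eta| = 1$, exactly as in Section \ref{sec:intrinsicG}. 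Since $H(e^{i\theta}\bb) = H(\bb)$ by \Cref{lem:RotDerivative}, the exponent is independent of $\widehat\theta$ and it suffices to analyze $H$ restricted to the transverse directions at $\bb_k^*$. Along this slice, $\bb - \bb_k^*$ is the orthogonal displacement, which equals $\Proj_{\bb_k^*}^\perp(\bb)$ up to the higher-order $t$-corrections already noted in Section \ref{sec:intrinsicG}.

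Second, I would invoke the fourth-order Taylor expansion \eqref{lem:Hexpansion} of $H$ about $\bb_k^*$. The zeroth-order term is $H(\bb_k^*) = h_*(m)$ by \Cref{thm:3ModeRho}, so it cancels the $h_*(m)$ subtraction. For the first-order term, write $\bxi = \bb - \bb_k^*$; by \eqref{eqn:lagmult} we have $\nabla H(\bb_k^*)[\bxi] = -\tfrac{14}{11}m\langle \bb_k^*, \bxi\rangle$, and the mass constraint \eqref{eqn:constraint} gives $2\langle \bb_k^*, \bxi\rangle + |\bxi|^2 = 0$, i.e. the linear term is $\tfrac{7}{11}m|\bxi|^2$ — precisely the $\tfrac12\langle \tfrac{14}{11}M(\bb)I \cdot, \cdot\rangle$ piece of $G_k$ (using $M(\bb) = m$ on the sphere and $|\Proj_{\bb_k^*}^\perp(\bb)|^2 = |\bxi|^2 + O(|\bxi|^3)$ on the cap). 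The second-order term is $\tfrac12\langle \nabla^2 H(\bb_k^*)\bxi, \bxi\rangle$, which is the remaining piece of $G_k$ (again using $\widehat\bb_k^*(\bb) = \bb_k^*$ on this slice). Thus $H(\bb) - h_*(m) = G_k(\bb) + R(\bb)$ where $R(\bb)$ collects: (i) the cubic and quartic Taylor terms $\nabla^3 H(\bb_k^*)[\bxi,\bxi,\bxi] + \nabla^4 H(\bb_k^*)[\bxi,\bxi,\bxi,\bxi]$, and (ii) the discrepancies between $\bxi$ and $\Proj_{\bb_k^*}^\perp(\bb)$ and between $\nabla^2 H(\widehat\bb_k^*(\bb))$ evaluated at the true nearest point versus at $\bb_k^*$; on $\SCap_k(\delta)$ all of these are $O(|\bxi|^3)$ with $|\bxi| \lesssim \sqrt\delta$.

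Third, I would carry out the integration estimate. Using $e^{-\beta(H-h_*)} = e^{-\beta G_k}e^{-\beta R}$ and $|e^{-\beta R} - 1| \le \beta|R|e^{\beta|R|}$, bound
\[
\Big|\int_{\SCap_k(\delta)} \big(e^{-\beta(H-h_*)} - e^{-\beta G_k}\big)\phi\,\Gamma_m(\dbb)\Big| \le \|\phi\|_{L^\infty(\Gamma_m)}\int_{\SCap_k(\delta)} e^{-\beta G_k(\bb)}\,\beta|R(\bb)|\,e^{\beta|R(\bb)|}\Gamma_m(\dbb).
\]
By \eqref{eqn:GopBound}, on directions $\bxi$ orthogonal to both $\bb_k^*$ and $i\bb_k^*$ the form $G_k$ controls $G_k(\bb) \ge \tfrac{m}{11}|\bxi|^2$; the two degenerate directions are the phase direction $i\bb_k^*$ (along which the integrand is constant and contributes only the harmless $2\pi$ factor) and the radial-like direction $\bb_k^*$ (which is suppressed because $H((1-\delta)\bb_k^*) > H(\bb_k^*)$, as recorded after \eqref{eqn:H2bprod}, so the constraint to $S(m)$ keeps us away from it). Hence on the cap $G_k(\bb) \gtrsim |\bxi|^2$ in the non-degenerate directions and $|R| \lesssim |\bxi|^3 \le \sqrt\delta|\bxi|^2 \le \tfrac12 G_k(\bb)/(\text{const})$ for $\delta$ small, so $e^{\beta|R|} \le e^{\frac12\beta G_k}$ and the integral is bounded by $\beta\int e^{-\frac12\beta G_k}|\bxi|^3\,\Gamma_m(\dbb)$. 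Rescaling $\bxi = \beta^{-1/2}\bz$ turns the volume element into $\beta^{-(4N+1)/2}$ times $d\bz$ restricted to the $(4N+1)$-dimensional tangent slice, of which $4N-1$ directions are Gaussian-damped and one (the phase) is a bounded length $2\pi$, while the $|\bxi|^3$ supplies $\beta^{-3/2}$; matching against the $\beta^{-N} = \beta^{-(4N)/2}\cdot\beta^{-0}$ normalization (the Gaussian normalization of $4N-2$ transverse complex... i.e. real directions gives $\beta^{-(4N-2)/2}$, times $\beta^{-1/2}$ for the radial slice width, times the phase length) shows the error is $\beta^{-1}$ smaller than the main term, uniformly in $k$ since all constants depend only on the fixed data $m$, the Hessian spectrum of \Cref{thm:Hessprops}, and $N$. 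The main obstacle — and the step requiring the most care — is the bookkeeping of powers of $\beta$ in this rescaling together with the degenerate radial and phase directions: one must verify that the main term $\beta^{-N}\int_{\SCap_k(\delta)} e^{-\beta G_k}\phi\,\Gamma_m(\dbb)$ is itself $\Theta(1)$ (so that an $O(\beta^{-1/2}\cdot\beta^{-1/2}) = O(\beta^{-1})$ relative correction is genuinely lower order), which hinges on the precise normalization of $\Gamma_m$ and the factorization of the Gaussian integral over the five correlated modes $\mathcal N_k$ versus the $4N-... $ independent white-noise directions identified in \Cref{thm:Hessprops}.
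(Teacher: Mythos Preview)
Your proposal is correct and follows essentially the same route as the paper: Taylor-expand the quartic $H$ about the nearest minimizer in local $(t,\bo)$-coordinates on the cap, use the Lagrange-multiplier identity \eqref{eqn:lagmult} together with the constraint \eqref{eqn:constraint} to convert the first-order term into the $\tfrac{14}{11}m|\cdot|^2$ piece of $G_k$, identify the Hessian piece, and then show the cubic/quartic remainder $R$ is $O(|\bxi|^3)$ so that after the $\bxi\mapsto\beta^{-1/2}\bz$ rescaling it contributes an extra $\beta^{-1/2}$ (and hence, after the $\beta|R|$ prefactor, a net $\beta^{-1}$) relative to the main Gaussian term.

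The only organizational difference is in the handling of the phase direction $i\bb_k^*$: the paper first disintegrates $\Gamma_m$ over the slices $\SCap_k(\delta,\theta)$ (introducing $\Upsilon_{m,\theta}$ and proving the auxiliary \Cref{lem:Gibbs_precise_cap_slice}), then integrates in $\theta$; you instead invoke the rotational invariance $G_k(e^{i\theta}\bb)=G_k(\bb)$, $H(e^{i\theta}\bb)=H(\bb)$ directly to factor the phase as a bounded $2\pi$. These are equivalent, and your treatment is arguably cleaner. One small sharpening: the discrepancies you flag in item (ii) between $\bxi=\bb-\bb_k^*$ and $\Proj_{\bb_k^*}^\perp(\bb)$ are in fact $O(|\bxi|^4)$ rather than $O(|\bxi|^3)$ (since $\bxi-\Proj_{\bb_k^*}^\perp(\bb)=-t\bb_k^*$ with $t\sim|\bxi|^2$, and $\bb_k^*$ is an eigenvector of $\nabla^2 H(\bb_k^*)$ orthogonal to $Q\bo$), which only makes your absorption step easier.
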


\Cref{lem:Gibbs_precise_cap} follows from the following even more
localized lemma.

\begin{lem}
\label{lem:Gibbs_precise_cap_slice}
    For any smooth, bounded function $\phi\colon S(m) \rightarrow \R$
    supported in $\SCap_k(\delta)$ for all $\theta \in
    [0,2\pi)$ and $\delta >0$ sufficiently small , we have 
        \begin{multline}\label{eq:conclusionCapSlice}
          \frac1{\beta^{N}}   
         \int_{ \SCap_k(\delta,\theta) } e^{-\beta (H(\bb)-h_*(m))} \phi(\bb) \Upsilon_{m,\theta}(\dbb)
         \\
    =    \frac1{\beta^{N}}    \int_{ \SCap_k(\delta,\theta) } e^{- \beta G_k(\bb)} \phi(\bb)  \Upsilon_{m,\theta}(\dbb) + {\rm Error} (m,\beta,N,\phi,k,\theta) 
  \end{multline}
  where $G_k$ is defined in \eqref{eq:G}. Additionally, there exists a constant $C(m,N) >0$ independent of $\phi$, $k$, and $\theta$ such that 
\begin{equation*}
   | {\rm Error} (m,\beta,N,\phi,k,\theta) | \leq  C(m,N)  \beta^{-1} \| \phi \|_{L^\infty (\Gamma_m)}. 
\end{equation*}
\end{lem}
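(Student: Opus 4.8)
The plan is a localized Laplace (Watson's lemma) estimate on each slice. The structural observation that makes everything work is that on a single slice $\SCap_k(\delta,\theta)$ the nearest minimizer $\widehat\bb_k^*(\bb)$ equals the \emph{fixed} point $\bb^*:=e^{i\theta}\bb_k^*$, so there $G_k$ is literally the quadratic form $\bb\mapsto\tfrac12\big\langle(\tfrac{14}{11}mI+\nabla^2H(\bb^*))\Proj_{\bb_k^*}^\perp(\bb),\Proj_{\bb_k^*}^\perp(\bb)\big\rangle$ with a $\bb$-independent operator, and the lemma reduces to showing that $H(\bb)-h_*(m)$ agrees with $G_k(\bb)$ up to a remainder cubically small in the distance to $B_k^*$. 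First I would fix $\delta$ small enough that on $\SCap_k(\delta)$ the map $\bb\mapsto\widehat\bb_k^*(\bb)$ is well defined and smooth and that the following coordinates are a diffeomorphism. On $\SCap_k(\delta,\theta)$ write $\bb=(1+c(\zeta))\bb^*+\zeta$, where $\zeta$ ranges over a ball in the $4N$-real-dimensional subspace $\{\bb^*,i\bb^*\}^\perp$ (orthogonality to $i\bb^*$ being exactly the condition $\widehat\theta_k^*(\bb)=\theta$) and $c(\zeta)$ is the unique small scalar enforcing $|\bb|^2=m$; from $|\bb^*|^2=m$ and $\zeta\perp\bb^*$ one gets $c(\zeta)=-\tfrac{|\zeta|^2}{2m}+O(|\zeta|^4)$, the exact identity $\Proj_{\bb_k^*}^\perp(\bb)=\zeta$, and $\Upsilon_{m,\theta}(\dbb)=J(\zeta)\,d\zeta$ with $J$ smooth and $J(0)$ a fixed constant.

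Next I would Taylor-expand $H$ about $\bb^*$ --- the expansion is exact since $H$ is quartic --- with $\bm\eta:=\bb-\bb^*=\zeta+c(\zeta)\bb^*$. Using $h_*(m)=H(\bb^*)$, the Lagrange-multiplier identity $\nabla H(\bb^*)=-\tfrac{14}{11}m\,\bb^*$ (the phase- and translation-image of \eqref{eqn:lagmult}), the sphere constraint $\langle\bb^*,\bm\eta\rangle=-\tfrac12|\bm\eta|^2$ from \eqref{eqn:constraint}, and $\nabla^2H(\bb^*)\bb^*=-\tfrac{42}{11}m\,\bb^*$ from \eqref{eqn:H2bprod} (so that $(\tfrac{14}{11}mI+\nabla^2H(\bb^*))\bb^*=-\tfrac{28}{11}m\,\bb^*$ and the cross terms between $\zeta$ and $c(\zeta)\bb^*$ drop out to leading order), one obtains
\begin{multline*}
H(\bb)-h_*(m)=\tfrac12\big\langle(\tfrac{14}{11}mI+\nabla^2H(\bb^*))\bm\eta,\bm\eta\big\rangle+O(|\bm\eta|^3)\\
=\tfrac12\big\langle(\tfrac{14}{11}mI+\nabla^2H(\bb^*))\zeta,\zeta\big\rangle+O(|\zeta|^3)=G_k(\bb)+r(\zeta),
\end{multline*}
where $r(\zeta)$ collects the cubic and quartic Taylor terms together with the contributions of $c(\zeta)$ and satisfies $|r(\zeta)|\le C(m)|\zeta|^3$ for $|\zeta|<\delta$. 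By \Cref{lem:RotDerivative} this expansion, including the constant $C(m)$, is uniform in $\theta$, and by lattice-translation covariance it is uniform in $k$.

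Finally I would run the Laplace estimate slicewise. Write $e^{-\beta(H-h_*)}-e^{-\beta G_k}=e^{-\beta G_k}\big(e^{-\beta r(\zeta)}-1\big)$. By \eqref{eqn:GopBound} the form $G_k=\tfrac12\langle(\tfrac{14}{11}mI+\nabla^2H(\bb^*))\zeta,\zeta\rangle$ is positive definite on $\{\bb^*,i\bb^*\}^\perp$ with $G_k(\bb)\ge\tfrac{m}{11}|\zeta|^2$ --- the would-be null direction $i\bb^*$ of $\tfrac{14}{11}mI+\nabla^2H(\bb^*)$ having been used up by fixing the angle $\theta$. Splitting the $\zeta$-integral at radius $\rho_\beta:=\sqrt{(\log\beta)/\beta}$: on $|\zeta|\le\rho_\beta$ one has $\beta|r(\zeta)|\le C\beta\rho_\beta^3\to0$, hence $|e^{-\beta r}-1|\le C\beta|\zeta|^3$, and $\int_{|\zeta|\le\rho_\beta}e^{-\beta G_k}\beta|\zeta|^3\,d\zeta$ is smaller than the base Gaussian integral $\int e^{-\beta G_k}\,d\zeta$ by a factor that is naively $\beta^{-1/2}$ but is in fact $\beta^{-1}$ once one notes that the leading cubic term is odd in $\zeta$ and integrates to zero against the even Gaussian; on $|\zeta|>\rho_\beta$ both $e^{-\beta(H-h_*)}$ and $e^{-\beta G_k}$ are $O(\beta^{-p})$ for every $p$. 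Inserting the bounded factors $\phi$ and $J(\zeta)$, the difference of the two slice integrals is one full power of $\beta^{-1}$ below their common leading behaviour, which after the $\beta^{-N}$ normalization is precisely the asserted bound $C(m,N)\beta^{-1}\|\phi\|_{L^\infty(\Gamma_m)}$, uniform in $k$ and $\theta$ by the invariances in \Cref{lem:RotDerivative}, \Cref{l:prop_inv}, and \Cref{c:Ginvariance}.

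I expect the main obstacle to be the uniformity bookkeeping rather than any individual estimate: the $O(|\zeta|^3)$ control of $r$, the Jacobian $J(\zeta)$ of the spherical-cap parametrization, the lower bound on $G_k$, and the Gaussian-integral constant must all be transported consistently under phase rotation and lattice translation, and it is exactly \Cref{lem:RotDerivative} through \Cref{c:Ginvariance} that permit this; arranging $\delta$ small enough for the cap coordinates to be a genuine diffeomorphism on every slice simultaneously is the remaining technical point.
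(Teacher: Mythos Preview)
Your proposal is correct and follows essentially the same Laplace-method strategy as the paper: quartic Taylor expansion of $H$ about the fixed minimizer on the slice, absorption of the linear term via the Lagrange-multiplier identity $\nabla H(\bb^*)=-\tfrac{14}{11}m\,\bb^*$ together with the sphere constraint, identification of the quadratic part with $G_k$, and a Laplace estimate using the positive-definiteness from \eqref{eqn:GopBound}.

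The differences are cosmetic but worth noting. The paper parametrizes the slice by $(t,\bo)$ with $\bb=(1-t)\bb^*+\sqrt{2t-t^2}\,Q\bo$ and $\bo$ on a unit sphere, whereas you work in linear coordinates $\zeta\in\{\bb^*,i\bb^*\}^\perp$ with the sphere constraint hidden in $c(\zeta)$. Your choice pays off: because $\Proj_{\bb_k^*}^\perp(\bb)=\zeta$ exactly, you land directly on $G_k$, while the paper first obtains an auxiliary form $\G_k(t,\bo)$ and must argue separately (their \eqref{eqn:GvsGk}) that $G_k$ and $\G_k$ agree to leading order in $t$. On the error rate, the paper reaches $\beta^{-1}$ by a rescaling bound that uses the extra factor $(2t-t^2)^{3/2}$ carried by the cubic remainder together with the decaying envelope $e^{\beta t^2(2-t)^2 h_*(m)}$; you instead invoke parity of the cubic term against the even Gaussian. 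Your parity argument is cleaner, but be aware it literally applies only to the constant part of $\phi\,J(\zeta)$; the non-constant part already contributes at order $\beta^{-1}$ without parity, so the conclusion stands once you expand $\phi J$ to first order at $\zeta=0$.
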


\begin{proof}[Proof of \Cref{lem:Gibbs_precise_cap}] Using
  \Cref{lem:Gibbs_precise_cap_slice}, we have that
  \begin{align*}
   \frac1{\beta^{N}}   
         & \int_{ \SCap_k(\delta)} e^{-\beta (H(\bb)-h_*(m))} \phi(\bb)
    \Upsilon_{m}(\dbb) \\&=   \frac1{\beta^{N}}    \int_0^{2\pi}
         \int_{\SCap_k(\delta,\theta)} e^{-\beta (H(\bb)-h_*(m))} \phi(\bb)
    \Upsilon_{m,\theta}(\dbb) d\theta\\&=\frac1{\beta^{N}}     \int_0^{2\pi}
         \int_{\SCap_k(\delta,\theta)}  e^{-\beta G_k(\bb)}
    \Upsilon_{m,\theta}(\dbb) d\theta +   {\rm Error} (m,\beta,N,\phi) 
  \end{align*}
  where
  \begin{align*}
    {\rm Error} (m,\beta,N,\phi) &= \int_0^{2\pi}   {\rm Error}(m,\beta,N,\phi,k,\theta) d\theta\\
    & \leq  C(m,N)\frac{1}{\beta}\|\phi\|_{L^\infty(\Gamma_m)}.
  \end{align*}
 Here  ${\rm Error}(m,\beta,N,\phi,k,\theta)$ is the constant from
 \Cref{lem:Gibbs_precise_cap_slice}. The final bound come exactly from
 the estimate in that result.
\end{proof}

To complete the proof of \Cref{thm:Gibbs_precise}, we introduce the smooth partition of
unity, $\widetilde \psi,\psi_{k}\colon \C^{2N+1} \rightarrow [0,1]$ for
$k=-N,\dots,0\dots,N$ with the following properties.
Each $\tilde 
\psi,\psi_{k} \geq 0$ with 
$\psi_{k}$ is supported  in $\SCap_k(2\delta)$, 
$\psi_{k}(\bb)=1$ for all $\bb \in \SCap_k(\delta)$, and $\widetilde 
\psi(\bb)+\sum  \psi_k(\bb)=1$ for all $\bb \in \C^{2N+1}$. Notice 
that taking together these assumptions also imply that $\widetilde 
\psi(\bb)=0$ for all $\bb \in  \cup_k \SCap_k(\delta)$. 

\begin{proof}[Proof of \Cref{thm:Gibbs_precise}]
    \begin{multline}
    \frac1{\beta^{N}}   
         \int_{S(m)} e^{-\beta (H(\bb)-h_*(m))} \phi(\bb) \Gamma_m( \dbb)  
         \\
    =  \frac1{\beta^{N}}    \int_{S(m)} e^{-\beta (H(\bb)-h_*(m))}  \widetilde \psi(\bb)
    \phi(\bb) \Gamma_m( \dbb)  \\+\sum_k \frac1{\beta^N}   \int_{S(m)} e^{-\beta (H(\bb)-h_*(m))}   \psi_k(\bb)
    \phi(\bb) \Gamma_m( \dbb) .
  \end{multline}
  We will show that the first term on the right-hand side converges to
  zero as $\beta \rightarrow \infty$ and the remaining terms converge
  to the anticipated Gaussian integral.
\end{proof}

\subsection{A Convenient Choice of Local Coordinates and the proof of Lemma \ref{lem:Gibbs_precise_cap_slice}}
\label{sec:LocalCoordinates}

We begin by fixing any  $\bb \in \SCap_k(\delta) \in S(m)$ and recalling that
$\hat \bb_k^*(\bb)$ was defined,  around \eqref{eq:Gk},  to be the point in the set of 
minimizers $B_k^*=\{e^{i\theta}\bb_k: \theta \in [0,2\pi)\}$ that was
closest to $\bb$. Also recall that we always assume that $\delta$ was
sufficiently small to make $\hat \bb_k^*(\bb)$ unique. Also recall
that we set $\hat \theta_k^*(\bb)=\arg \hat \bb_k^*(\bb)$. Having
fixed $\bb$, we will simply write $\hat \bb_k^*$ and $\hat \theta_k^*$.

We now chose an orthonormal  basis $\{ \hat \bb_k^*, i  \hat \bb_k^*,
q_1,\dots,q_{4N}\}$ of $\C^{2N+1}$. Any $\bb' \in S(m)$ can be
represented uniquely as
\begin{align*}
  \bb = (1-t) \hat \bb_k^* + s i \hat \bb_k^* +\sqrt{2 t - t^2 -s^2} Q \bm{\Psi}
\end{align*}
where $Q$ is the matrix with columns $\{q_i: i =1, \dots,2N\}$ and
$\bm{\Psi} \in S^{2n}$, the unit ball in  $\R^{2n}$.
\begin{figure}[h]
    \centering 
    \begin{tikzpicture}
      \tkzInit[xmax=4,xmin=-4,ymax=3,ymin=-3]

      \tkzDefPoint(-2,0){lc}
      \tkzDefPoint(-3,1){ltl}
      \tkzDefPoint(-1,1){ltr}
      \tkzDefPoint(-2,0){ltm}
      \tkzDefPoint(-3,-1){lbl}
      \tkzDefPoint(-1,-1){lbr}
      \tkzDefPoint(-2,-1){lbm}

      \tkzDrawCircle[thick](lc,ltl); 
      \draw[thick] (-2,0) -- (-2,1.414); 
       \draw[thick] (-2,1) -- (-1,1); 

\draw[thin,dashed] (-3,2) -- (-2,1); 

\draw[thin,dashed] (0,1.5) -- (-1,1); 

\draw[thin,dashed] (-3,-2) -- (-2.15,-.125); 

\draw[thin,dashed] (-2,0) -- (-3,1); 

\draw[thin,dashed] (-2,0) -- (-1,1); 
      \node [anchor=east] at (-3,2) {$(1-t) \hat\bb^*$}; 
      \node [anchor=east] at (-3,-2) {$i \hat \bb^*$}; 
       \node [anchor=west] at (0,1.5) {$(1-t) \hat \bb^* + si \hat \bb^*$};
        \node [anchor=west] at (1,1) {$+ \sqrt{2 t - t^2-s^2} Q\bo$};
       
        \node [anchor=north] at (-2,2) {$\hat \bb^*$}; 

        \node [circle, fill=black, inner sep=1.5pt] at (-2,0) {}; 

        \node [circle, fill=black, inner sep=1.5pt] at (-2,1) (1pt) {}; 

            \node [circle, fill=black,inner sep=1.5pt ] at (-2.15,-.125) (1pt) {}; 

                      \node [circle, fill=black,inner sep=1.5pt ] at (-1,1) (1pt) {}; 
   \draw [dashed] (ltm) ellipse (1.414 and 0.125); 
      \draw [dashed] (ltm) ellipse (0.125 and 1.414); 

        \draw [dashed, thick, blue] (-2,1) ellipse (1 and 0.17); 
        \shade[ball color=blue!10!white,opacity=0.3] (-2.7,1.2) arc (-225:45:1 and .25) arc (45:135:1 and 1);

    \end{tikzpicture}
          \caption{A cartoon of the coordinate system on the sphere with the representative spherical cap shaded and bordered in blue.}
      \label{fig:sphere}
\end{figure}
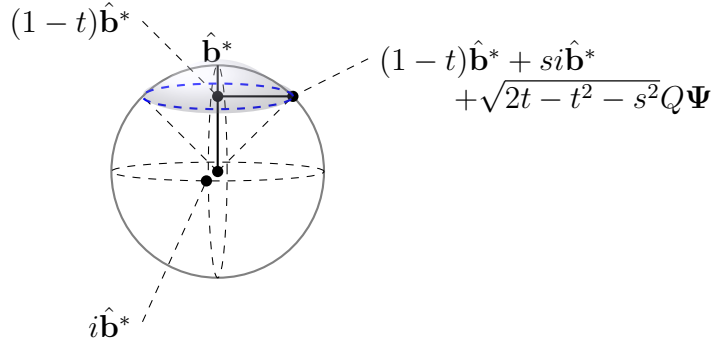

Notice that since  $\hat
\bb_k^*$ is the point in $B_k^*$ closest to $\bb$ and since $i \bb_k^*$ is
tangent to $B_k^*$ at the point $\bb_k^*$, we know that $s=0$.
Furthermore, observe  $\bb_k^*(\bb)$ and  $\hat \theta_k^*(\bb)$ are the
same for all $\bb \in  \SCap_k(\delta, \theta)$ (with  $\hat
\theta_k^*(\bb)=\theta$ in particular). Thus, for any $\bb \in
\SCap_k(\delta, \theta)$ we can write
\begin{align}\label{eq:genaric-cap-point}
  \bb & = (1-t) \hat \bb_k^* +\sqrt{2 t - t^2} Q \bm{\Psi} = \Proj_{\hat \bb_k^*} (\bb) + \Proj_{\hat \bb_k^*}^\perp (\bb) 
\end{align}
for some $t \in [0,1)$, $\bm{\Psi}  \in S^{2n}$ with a common $\hat \bb_k^*$ and an orthogonal matrix $Q$ corresponding to a choice of rotation that centers our coordinates around $\bb_k^*$ as the north pole of the sphere.  See Figure \ref{fig:sphere}.

\begin{proof}[Proof of \Cref{lem:Gibbs_precise_cap_slice}]

We begin by fixing a $\SCap_k(\delta,\theta)$. As already observed, for
all $\bb \in \SCap_k(\delta,\theta)$, $\widehat \bb_k^*(\bb)=\bb^*$ for some
common $\bb^*$ and $\arg \bb^* = \widehat \theta^*_k(\bb)=\theta$.

For $\bb \in \SCap_k(\delta,\theta)$, we wish to describe the structure of
\begin{align*}
    H(\bb) - H(\bb^*) & = H(\bb^*+\bxi) - H(\bb^*) \\
    & = G_k (\bb) + \tfrac16 \nabla^3 H (\bb^*) (\bxi,\bxi,\bxi) + H(\bxi).
\end{align*}

To make the integration more precise, we use the coordinate system introduced above in \Cref{fig:sphere}. Observe that any point $\bb \in \SCap_k(\delta,\theta)$, can be
written as $\bb = p \bb^* +q \bxi$ for some real $p$ and $q$, $p^2 + q^2 = 1$, and a $\bxi$
which is orthogonal to $\bb^*$. We now
expand $H(p\bb^*  + q\bxi ) - H(\bb^*)$ around the point $p \bb^*$ to
obtain 
\begin{align*}
H(p \bb^*  +q \bxi ) &=p^4H(\bb^*)+p^3 q \nabla H(\bb^*)  \bxi +\tfrac{ p^2q^2 }2 \nabla^2 H (\bb^*) (\bxi, \bxi) \\
& \hspace{2cm} +\tfrac{pq^3}6 \nabla^3 H (\bb^*) (\bxi, \bxi, \bxi) +\tfrac{q^4}{24} \nabla^4 H(\bxi, \bxi, \bxi, \bxi). 
\end{align*}
We note that since $H$ is a quartic polynomial, the Taylor series
expansion genuinely truncates after the $4$th order and as such we
have made no approximations in the above expression. This also
implies that $H(\bxi)=\tfrac{1}{24} \nabla^4 H(\bxi, \bxi, \bxi,
\bxi)$ a fact that we will use often in the sequel.

Using our observations about the  Lagrange 
multiplier from  \eqref{eqn:lagmult} and recalling that $\nabla H
(\bb^*) = -\tfrac{14}{11} m \bb^*$ and $| p \bb^* + q Q \bo|^2 =
|\bb^*|^2$, we see that 
\begin{equation}
  \label{eq:Hexp_pq}
  \begin{aligned}
p^3 q \nabla
H(\bb^*)  \bxi & = -\tfrac{7}{11} m p^2  [ (1-p^2) |\bb^*|^2 - q^2 |\bxi|^2 ] \\
& = \tfrac{7}{11}(p^2-1)p^2m^2+ \tfrac{p^2q^2}2
\tfrac{14}{11} m |\bxi|^2 \\
& =2 (1-p^2)p^2 H(\bb^*) + \tfrac{p^2q^2}2
\tfrac{14}{11} m |\bxi|^2 \notag
  \end{aligned}
\end{equation}
since $H(\bb^*) = -\tfrac{7}{22} m^2$.
Combining this with the above expansion yields
\begin{multline}\label{eq:H_xi-expansion}
  H(p \bb^*+ q\bxi)- H(\bb^*) =[p^4-1-2(p^2-1)p^2]H(\bb^*) \\+ \tfrac{ p^2q^2 }2
                               \langle ( \tfrac{14}{11} m I +
                                \nabla^2 H ) \bxi, \bxi\rangle +  R(p,q,\bxi)
\end{multline}
where $R(p,q,\bxi)= \tfrac{pq^3}6 \nabla^3 H (\bb^*) (\bxi, \bxi,
\bxi) +q^4H(\bxi)$.

It is convenient to fix $m=1$ from here onward in the proof without loss of generality and consider
the coordinate representation on the sphere given by 
\begin{equation}
\label{eqn:capcoords}
\bb(t,\bo)\eqdef \bb^* + \bxi =
(1-t)  \bb^* + \sqrt{2 t -  t^2} Q \bo
\end{equation}
from \eqref{eq:genaric-cap-point}, i.e. $\sqrt{2 t -  t^2} Q \bo $. Since $\bb^*$ orthogonal to $Q
\bo$, we can use the \eqref{eq:H_xi-expansion} with $p=1-t$ and $q=
\sqrt{2 t -  t^2}$.

Then, we write 
\begin{equation}
  \label{Hshpexp_alt}
    \begin{aligned}
   & H(b(t,\bo))-H(\bb^*)  = H((1-t) \bb^* + \sqrt{2 t - t^2} Q \bo ) - H(\bb^*) \\
 =& - t^2 (2-t)^2 H(\bb^*)  +  \frac12 (1-t)^2 (2t - t^2) \left \langle \left( \tfrac{14}{11} I + \nabla^2 H(\bb^*) \right) Q \bo, Q \bo \right \rangle \notag  \\
 &+ (1-t) (2 t - t^2)^{\frac32}  \tfrac16 \nabla^3 H(\bb^*)
 (Q \bo, Q \bo,Q \bo) + (2t - t^2)^2 H(Q
 \bo) . \notag
  \end{aligned}
\end{equation}

We thus define 
\begin{align}
    \label{Gfunc}
    \G_k(t,\bpsi) &  \eqdef \tfrac12  (1-t)^2 (2t - t^2) \left \langle \left( \tfrac{14}{11} I + \nabla^2 H(\bb^*) \right) Q \bo, Q \bo \right \rangle  
\end{align}
so
\begin{equation*}
   H(b(t,\bo))-H(\bb^*)  = - t^2 (2-t)^2 H(\bb^*) + \G_k(t,\bpsi) + \K_k(t,\bo)
\end{equation*}
for
\begin{align*}
    \K_k(t,\bpsi) & \eqdef (1-t) (2 t - t^2)^{\frac32}  \tfrac16 \nabla^3 H(\bb^*) (Q \bo, Q \bo,Q \bo) \\ 
    & + (2t - t^2)^2 H(Q \bo).
\end{align*}

Using the identity \eqref{eqn:H2bprod}, the definition of $G_k$ in \eqref{eq:Gk}, and $\bxi (\bb) = -t \bb^* + \sqrt{2t - t^2}Q \bo$, we observe that
\begin{equation}
\label{eqn:GvsGk}
\begin{array}{ll}
G_k (\bb) & \!\!\!\! =\tfrac12  \left \langle \left( \tfrac{14}{11} I + \nabla^2 H(\bb^*) \right) (I - \langle \bb^*, \bxi \rangle) \bxi (\bb), (I - \langle \bb^*, \bxi  \rangle) \bxi(\bb) \right \rangle \\
& \!\!\!\!  = \G_k(t,\bo) + (1 - (1-t^2)) \G_k(t,\bo)   \\
& \!\!\!\!  = \G_k(t,\bo) + (2t-t^2) \G_k(t,\bo), 
\end{array}
\end{equation}
meaning that $G_k(\bb)$ and $\G_k(t,\bo)$ are the same at $t=0$, where we
will see the measure concentrates, and they are small perturbations of
each other for $0< t < \delta$ when $\delta$ is small.

With these preliminaries out of the way, we turn to the proof the fact
that the integral in \Cref{lem:Gibbs_precise_cap_slice} is well
approximated by a Gaussian-like measure as $\beta \rightarrow
\infty$. We recall that by assumption the smooth test function $\phi$
is supported inside of $\SCap(\delta,\theta)$.

We write
\begin{multline*} 
         \int_{\SCap_k(\delta)} e^{-\beta (H(\bb)-h_*(m))} \phi(\bb)
  \Gamma_m( \dbb) = \int_0^{\delta} (2 t-t^2)^{2 N} \\\times \int_{S^{4N}}
  e^{ \beta t^2 (2-t)^2 h_*(m)} e^{-\beta[\G_k(t,\bpsi) + \K_k(t,\bpsi)]} \phi(b(t,\bo))
  \sigma (d\bo) d t 
\end{multline*}
where $b(t,\bo)=(1-t)  \bb^* + \sqrt{2 t -  t^2} Q \bo$.
Take
\begin{multline*}
\int_0^{\delta} (2 t-t^2)^{2 N}  \int_{S^{4N}}
 e^{ \beta t^2 (2-t)^2 h_*(m)}  e^{-\beta[\G_k(t,\bpsi) + \K_k(t,\bpsi)]} \phi(b(t,\bo))
  \sigma (d\bo) d t \\ = 
 \int_0^{\delta} (2 t-t^2)^{2 N} e^{ \beta t^2 (2-t)^2 h_*(m)} \int_{S^{4N}} e^{-\beta \G_k } \phi(b(t,\bo))
  \sigma (d\bo) d t \\
\ \ +  \int_0^{\delta} (2 t-t^2)^{2 N}  e^{ \beta t^2 (2-t)^2 h_*(m)} \int_{S^{4N}} e^{-\beta \G_k } [ e^{-\beta K} - 1] \phi(b(t,\bo))
  \sigma (d\bo) d t  .
\end{multline*}
Note, using \eqref{eqn:GvsGk}, we see that the function $\G_k$ above can be replaced by $G_k$.

Thus, we will focus our attention on bounding 
\begin{equation*}
\int_0^{\delta} (2 t- t^2)^{2 N} e^{ \beta t^2 (2-t)^2 h_*(m)} \int_{S^{4N}} e^{-\beta \G_k } [ e^{-\beta K} - 1] \phi(b(t,\bo))
  \sigma (d\bo) d t 
\end{equation*}
as $\beta \to \infty$.  
For a given $\beta$, rescaling $\omega \to \sqrt{\beta}^{-1} \omega$, we get
\begin{equation*}
 \beta^{2N} \int_0^{\delta} (2 t-t^2)^{2 N} e^{ \beta t^2 (2-t)^2 h_*(m)} \int_{S^{4N}} e^{-\G_k} [ e^{-\tilde K} - 1]  \phi(b(t,\bo))
  \sigma (d\bo) d t  ,
\end{equation*}
where
\begin{multline*}
    \widetilde \K_k(t,\bo)  \eqdef \frac{1}{ \sqrt{\beta}} (1-t) (2 t - t^2)^{\frac32}  \frac16 \nabla^3 H(\bb^*) (Q \bo , Q \bo,Q \bo) \\
    + \frac{1}{\beta}  (2t - t^2)^2 H(Q \bo).
\end{multline*}
Using such a decomposition, we are now easily able to integrate out all the terms to prove Theorem
\ref{thm:Gibbs_precise} and hence Theorem \ref{thm:Gibbs}.  Indeed, we observe that for $\beta$ sufficiently large and $\delta < 3/4$, we have
\begin{align*}
& \beta^{2N} \int_0^{\delta} (2 t-t^2)^{2 N} e^{ \beta t^2 (2-t)^2 h_*(m)} \int_{S^{4N}} e^{-G} [ e^{-\tilde K} - 1]  \phi(b(t,\bo))
  \sigma (d\bo) d t  \\
& \hspace{1cm}  \leq C(m,N)  \beta^{2N-1/2} \int_0^\delta u^{2N} e^{- \beta m \frac{7}{22} u^2} du \leq C(m,N)  \beta^{N-1} ,
\end{align*}
where we have abused notation and updated the $\beta$-independent constant $C(m,N)$ from line to line.

Thus, the smallness of $e^{-\tilde K } -1$ means this bound is thus lower order in $\beta$ from that of
the integral 
\begin{equation*}
\int_0^{\delta} (2 t- t^2)^{2 N} e^{ \beta t^2 (2-t)^2 h_*(m)} \int_{S^{4N}} e^{-\beta \G_k }  \phi(b(t,\bo))
  \sigma (d\bo) d t .
\end{equation*}

It is also now clear that the contribution of the measure is exponentially small as $\beta \to \infty$ when considering any contribution away from $t = 0$.  In other words, we converge precisely to a measure of the form described in Theorem \ref{thm:Gibbs}.  To see this, we have observed that
\begin{multline*} 
         \int_{\SCap_k(\delta)} e^{-\beta (H(\bb)-h_*(m))} \phi(\bb)
  \Gamma_m( \dbb) \approx \\
  \int_0^{\delta} (2 t-t^2)^{2 N} \times \int_{S^{4N}}
  e^{ \beta t^2 (2-t)^2 h_*(m)} e^{-\beta[\G_k(t,\bpsi)]} \phi(b(t,\bo))
  \sigma (d\bo) d t .
\end{multline*}
On the set of optimizers, we have that $e^{-\beta (H(\bb)-h_*)}  = 1$, $G(i \bb^*) = 0$ and recall \eqref{eqn:GopBound}.  
We thus observe that outside a small collar around the neighborhood of optimizers of size $\beta^{-s}$ for $\tfrac12 < s < 1$, the operator $G$ ensures exponentially small contributions since on $\SCap_k(\delta)$, we have 
\begin{equation*}
e^{-\beta[\G_k(t,\bpsi)]} \leq e^{-\beta^{1-s} (2t-t^2) \tfrac{2}{11} |\bpsi|^2} \leq e^{-\tfrac{3}{11} \beta^{1-s} }
\end{equation*}
since $2 \beta^{1-s} - \beta^{1-2s} > \tfrac32 \beta^{1-s}$ for $\beta$ sufficiently large.

We can thus compare this expansion with the integral arising in \eqref{eq:gaussian}, namely
\begin{multline*} 
\int_{\SCap_k(\delta)} e^{-\beta (H(\bb)-h_*(m))} \phi(\bb)
  \Gamma_m( \dbb)  \\
  \approx
\int_0^{\delta} (2 t-t^2)^{2 N} \times \int_{S^{4N}}
  e^{ \beta t^2 (2-t)^2 h_*(m)} e^{-\beta[\G_k(t,\bpsi)]} \phi(b(t,\bo))
  \sigma (d\bo) d t  \\
  \approx \int_0^{\delta} (2 t-t^2)^{2 N}  \times \int_{S^{4N}}
  e^{ \beta t^2 (2-t)^2 h_*(m) } e^{-\beta \left[\frac{G_k(t,\bpsi)}{(1+2t-t^2)} \right] } \phi(b(t,\bo))
  \sigma (d\bo) d t \\
 \\
 \approx \int_0^{\delta} (2 t-t^2)^{2 N} \times \int_{S^{4N}}
e^{-\beta G_k(b(t,\bpsi))} \phi(b(t,\bo))
  \sigma (d\bo) d t  \\
= \int_{\SCap_k(\delta)} e^{-\beta G_k(\bb)} \phi(\bb)
  \Gamma_m( \dbb) .
\end{multline*} 
Above, we have used the identity\eqref{eqn:GvsGk} as well as the coordinate system \eqref{eqn:capcoords} for the spherical cap.

\end{proof}

We now sketch the proofs of \Cref{rem:contration} and
\Cref{c:GareTheSame}. The proofs are relatively standard but follows as
byproducts of some of the computations in the proof of
\Cref{lem:Gibbs_precise_cap_slice} given above.

\begin{proof}[Proof of  \Cref{rem:contration} and
  \Cref{c:GareTheSame}]
Looking back at the proof of \Cref{lem:Gibbs_precise_cap_slice}, it
was shown that as $\beta \rightarrow \infty$, the measures all
concentrate in the set $\bigcup_k \SCap_k(\delta)$ for any $\delta
>0$. Since  for $\epsilon >0$,  $\bigcup_k \SCap_k(\delta) \subset
B_k^{*\epsilon}$ for $\delta$ small enough the results in
\Cref{rem:contration} follow.

Similarly since
\begin{align*}
  \lim_{\epsilon \rightarrow 0}\sup_{ \bb \in  B^{*\epsilon}}|
  G(\bb) -  \widetilde G(\bb)| =\lim_{\epsilon \rightarrow 0}  \sup_k \sup_{ \bb \in  B_k^{*\epsilon}}|
  G(\bb) -  G_k(\bb)| = 0
\end{align*}
  the results of  \Cref{c:GareTheSame} follow from the concentration
  results of  \Cref{rem:contration}.

\end{proof}

\section{Discussion and Extensions}
\label{discussion}

From the characterization of the Gibbs measure, we can easily observe that solutions have support on every lattice node with probability $1$.  In this setting, using \cite{CKSTT}, that correlates to high frequencies having non-trivial support for generic solutions in a neighborhood of the minimizing solutions

A similar compact stationary minimizing solution in Theorem \ref{thm:3ModeRho} was found for a continuum version of this problem in \cite{germain2017compactons}.  The authors also described traveling compact solutions that are infinite energy.  Existence theory in a neighborhood of such a compact supported solution has only begun to be studied in \cite{germain2019existence,harropLWP}.  While there are notions of Mass conservation and a similar Hamiltonian, the invariance of the measure is much more challenging given the difficulty is studying the evolution equations.  However, a hydrodynamics formulation exists and gives further insight into the expected dynamics.  Developing a related Gibbs measure framework to that constructed here for the continuum model in \cite{germain2017compactons} is an interesting future direction for study.

For related discrete models, the recent paper \cite{parker2024standing} studied stationary solutions for a generalization of our model of the form
\begin{equation}
  \label{e:toy_model_gen}
  -i\dt b_j (t) = -\abs{b_j(t)}^2 b_j(t) + D ( b_{j-1}^2 \overline{b_j}(t)
  + b_{j+1}^2 \overline{b_j}(t)) ,
\end{equation}
where $D$ is now a parameter (for us, $D=2$).  For $D = \frac12$, the authors find that a $10$-mode state is the minimizer of the energy for instance. We expect some of our results here to apply to this more general setting, though some of the explicit rearrangement arguments in Section \ref{Sec:minimizer} would need to be appropriately modified.

\bibliographystyle{abbrv}

\bibliography{ToyModel2}

\end{document}